\theoremstyle{plain}
\newtheorem{theorem}{Theorem}[section]
\newtheorem*{theoremnn}{Theorem A.1}
\newtheorem{conjecture}[theorem]{Conjecture}
\newtheorem{corollary}[theorem]{Corollary}
\newtheorem{lemma}[theorem]{Lemma}
\newtheorem{proposition}[theorem]{Proposition}
\theoremstyle{remark}
\DeclareMathOperator*{\supp}{supp}
\numberwithin{equation}{section}
\newcommand{\recip}[1]{\frac{1}{#1}}
\newcommand{\Dl}{\mathcal{D}_\lambda^+}
\newcommand{\e}{\epsilon}
\newcommand{\norm}[1]{\|{#1}\|}
\newcommand\pO{\partial\Omega}
\newcommand\ra{\rightarrow}
\newcommand\R{\mathbb R}
\newcommand{\re}{\mathbb R}
\newcommand{\Qs}{Q^0_{\lambda,\Gamma_1,\Gamma_2}}
\newcommand{\Qa}{Q^1_{\lambda, \Gamma_1,\Gamma_2}}
\newcommand{\Sl}{\mathcal{S}_{\lambda}^+}
\renewcommand{\Re}{\mathop{\rm Re}\nolimits}
\renewcommand{\Im}{\mathop{\rm Im}\nolimits}
\title[Layer potentials and operators]
{Sharp norm estimates of layer potentials and operators at high frequency}
\author{Xiaolong Han}
\address{Department of Mathematics, Australian National University, Canberra, ACT 0200, Australia}
\email{Xiaolong.Han@anu.edu.au}
\author{Melissa Tacy\\ \\{\smaller with an appendix by} Jeffrey Galkowski}
\address{School of Mathematics, University of Adelaide, Adelaide, SA 5005, Australia}
\email{melissa.tacy@adelaide.edu.au}
\subjclass[2010]{31B10, 31B25, 31B35}
\keywords{layer potentials, layer operators, high frequency, eigenfunctions, quasimodes, restriction estimates, boundedness, sharpness}
\begin{document}

\begin{abstract}
In this paper, we investigate single and double layer potentials mapping boundary data to interior functions of a domain at high frequency $\lambda^2\to\infty$. For single layer potentials, we find that the $L^{2}(\partial\Omega)\to{}L^{2}(\Omega)$ norms decay in $\lambda$. The rate of decay depends on the curvature of $\partial\Omega$: The norm is $\lambda^{-3/4}$ in general domains and $\lambda^{-5/6}$ if the boundary $\pO$ is curved. The double layer potential, however, displays uniform $L^{2}(\partial\Omega)\to{}L^{2}(\Omega)$ bounds independent of curvature. By various examples, we show that all our estimates on layer potentials are sharp. 

The appendix by Galkowski gives bounds $L^{2}(\partial\Omega)\to{}L^{2}(\partial\Omega)$ for the single and double layer operators at high frequency that are sharp modulo $\log \lambda$. In this case, both the single and double layer operator bounds depend upon the curvature of the boundary.
\end{abstract}

\maketitle 
\section{Introduction}\label{introduction}

Denote $\Delta=\sum_{i=1}^n\partial_i^2$ as the Laplacian operator in $\R^n$. Given a piecewise smooth and bounded domain $\Omega\subset\R^n$, Green's formula yields that the solution to the Helmholtz equation in $\Omega$
$$-\Delta u=\lambda^2 u$$
has the form
\begin{equation}\label{eq:Green}
u(x)=\int_{\partial\Omega}K_\lambda(x-y)\partial_{\nu_y} u(y)d\sigma_y-\int_{\partial\Omega}\partial_{\nu_y} K_\lambda(x-y)u(y)d\sigma_y,
\end{equation}
where $\partial_{\nu_y}$ is the outward normal derivative at $y\in\pO$, $d\sigma$ is the surface measure on $\pO$, and $K_\lambda$ is a fundamental solution to $(-\Delta-\lambda^2)$, that is
\begin{equation}\label{eq:fund}
(-\Delta-\lambda^2)K_\lambda(x)=\delta(x).
\end{equation}
In fact, we can write $K_\lambda$ explicitly as
$$K^+_{\lambda}(x)=\frac{i}{4}\left(\frac{\lambda}{2\pi|x|}\right)^{\frac{n-2}{2}}H^{(1)}_{\frac{n-2}{2}}(\lambda|x|),$$
where $H^{(1)}_{\frac{n-2}{2}}$ is the Hankel function of the first kind and order $\frac{n-2}{2}$. It is the kernel of the outgoing resolvent $R^+_\lambda=\left[-\Delta-(\lambda+i0)^2\right]^{-1}$.

Let us therefore define the semiclassical single layer potential $S^+_\lambda$ as
$$S_\lambda^+(f)=R^+_\lambda(fd\sigma)=K^+_\lambda\ast(fd\sigma),$$
and the semiclassical double layer potential $D^+_\lambda$ as
$$D_\lambda^+(f)=\partial_\nu K^+_\lambda\ast(fd\sigma).$$
Now \eqref{eq:Green} can be written
\begin{equation}\label{eq:represent}
u=S^+_\lambda(\partial_\nu u)-D_\lambda^+(u|_{\partial\Omega}),
\end{equation}
allowing us to construct interior eigenfunction from boundary data. In particular, 
$$\begin{cases}
u=S_\lambda^+(\partial_\nu u), & \text{ for Dirichlet eigenfunction } u|_{\pO}=0,\\
u=-D_\lambda^+(u|_{\pO}), & \text{ for Neumann eigenfunction } \partial_\nu u=0.
\end{cases}$$

\subsection*{Main results}
In this paper we obtain sharp $L^{2}(\partial\Omega)\to{}L^{2}(\Omega)$ bounds on $S^{+}_{\lambda}$ and $D^{+}_{\lambda}$.
\begin{theorem}[Boundedness and sharpness of semiclassical single layer potentials]\label{thm:SLP}\hfill
\begin{enumerate}[(i).]
\item In a general domain $\Omega$,
$$\|S^+_\lambda(u)\|_{L^2(\Omega)}\le c\lambda^{-\frac34}\|u\|_{L^2(\partial\Omega)},$$
where $c$ depends only on $\Omega$. Furthermore, the exponent $-3/4$ is sharp if the boundary $\pO$ contains a flat piece.
\item If $\partial\Omega$ is curved, that is, the second fundamental form of $\partial\Omega$ is (positive or negative) definite, then
$$\|S^+_\lambda(u)\|_{L^2(\Omega)}\le c\lambda^{-\frac56}\|u\|_{L^2(\partial\Omega)},$$
where $c$ depends only on $\Omega$. Furthermore, the exponent $-5/6$ is sharp if $\Omega$ is an annulus.
\end{enumerate}
\end{theorem}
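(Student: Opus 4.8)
\emph{Upper bounds: the strategy.} Extend $v\in L^2(\Omega)$ by zero to $\R^n$. Since $(R^+_\lambda)^*=R^-_\lambda$, for $u\in L^2(\pO)$ one has $\langle S^+_\lambda u,v\rangle_{L^2(\Omega)}=\langle R^+_\lambda(u\,d\sigma),v\rangle=\langle u,\gamma R^-_\lambda v\rangle_{L^2(\pO)}$ with $\gamma$ the trace on $\pO$, whence
\[\|S^+_\lambda\|_{L^2(\pO)\to L^2(\Omega)}=\|\gamma R^-_\lambda\|_{L^2(\Omega)\to L^2(\pO)}.\]
Fix $v$, put $w=R^-_\lambda v$, so $(-\Delta-\lambda^2)w=v$, and split $w=w_{\mathrm{hi}}+w_{\mathrm{lo}}$ with a Fourier cut-off separating $\{|\xi|\sim\lambda\}$ from its complement. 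On the complement $-\Delta-\lambda^2$ is elliptic: $w_{\mathrm{lo}}$ is an $O(\lambda^{-2})$-smoothing operator applied to $v\in L^2_{\mathrm{comp}}$, and the Sobolev trace theorem bounds $\|\gamma w_{\mathrm{lo}}\|_{L^2(\pO)}$ by $O(\lambda^{-3/2+\e})\|v\|$, which is negligible. For $w_{\mathrm{hi}}=R^-_\lambda v_{\mathrm{hi}}$ the cut-off limiting absorption principle gives $\|w_{\mathrm{hi}}\|_{L^2(U)}\le c\lambda^{-1}\|v\|$ on any bounded neighbourhood $U$ of $\overline\Omega$, and $w_{\mathrm{hi}}$ is frequency-localized near the characteristic sphere.

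\emph{Upper bounds: the restriction input.} I would then invoke a restriction estimate for such quasimodes to the hypersurface $\pO$, of Burq--G\'erard--Tzvetkov/Tacy type:
\[\|\gamma w_{\mathrm{hi}}\|_{L^2(\pO)}\ \le\ c\,\lambda^{\theta}\,\bigl(\|w_{\mathrm{hi}}\|_{L^2(U)}+\lambda^{-1}\|(-\Delta-\lambda^2)w_{\mathrm{hi}}\|_{L^2(U)}\bigr),\]
with $\theta=\tfrac14$ for a general smooth boundary and $\theta=\tfrac16$ when the second fundamental form of $\pO$ is definite. As $\|(-\Delta-\lambda^2)w_{\mathrm{hi}}\|_{L^2}=\|v_{\mathrm{hi}}\|_{L^2}\le\|v\|$, combining the displays gives $\|\gamma w\|_{L^2(\pO)}\le c\lambda^{\theta-1}\|v\|$, i.e. $\lambda^{-3/4}\|v\|$ in general and $\lambda^{-5/6}\|v\|$ when $\pO$ is curved, which are (i) and (ii). The crux is exactly this restriction estimate with the improvement from $\lambda^{1/4}$ (the Knapp example) to $\lambda^{1/6}$: the improvement genuinely uses definiteness of the second fundamental form, and one must establish it in the above quasimode form and on a two-sided neighbourhood of $\pO$ (the region where $w_{\mathrm{hi}}$ solves the equation). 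This is where I expect the real work.

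\emph{Sharpness of (i).} After a rigid motion assume $\{(y',0):|y'|<\rho\}\subset\pO$ with $\Omega\subset\{y_n>0\}$ nearby. Test against the grazing wave packet $u_\lambda(y')=e^{i\mu y_1}\chi(y'/\ell)$ carried by the flat piece, with tangential frequency $\mu=\lambda-\sqrt\lambda$ and width $\ell=\lambda^{-1/4}$ ($\chi$ a fixed bump, so $\|u_\lambda\|^2_{L^2(\pO)}\sim\ell^{n-1}\|u_\lambda\|_\infty^2$). Writing $S^+_\lambda u_\lambda=R^+_\lambda(u_\lambda\,d\sigma)$ on the Fourier side and evaluating the $\xi_n$-contour integral shows that, inside $\Omega$ near the origin, $S^+_\lambda u_\lambda$ is a beam of amplitude $\sim(\lambda^2-\mu^2)^{-1/2}\|u_\lambda\|_\infty\sim\lambda^{-3/4}\|u_\lambda\|_\infty$ supported (to leading order) in a slanted tube of dimensions $\sim 1$ along $e_1$, $\sim\ell$ in the remaining $n-2$ tangential directions, and $\sim\lambda^{-1/4}$ into $\Omega$. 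Hence $\|S^+_\lambda u_\lambda\|_{L^2(\Omega)}^2\sim\lambda^{-(n-1)/4}\cdot\lambda^{-3/2}\|u_\lambda\|_\infty^2\sim\lambda^{-3/2}\|u_\lambda\|^2_{L^2(\pO)}$, so the exponent $-3/4$ is optimal. (Any choice $\mu=\lambda-\lambda^\gamma$, $\ell=\lambda^{-\gamma/2}$, $\gamma\in(0,1)$, works equally well.)

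\emph{Sharpness of (ii).} Let $\Omega=\{1<|x|<2\}$ and take $u_\lambda=Y_m$ supported on the inner sphere, $Y_m$ a spherical harmonic of degree $m=\lfloor\lambda\rfloor$. Separation of variables and the Bessel Wronskian $W[J_\nu,H^{(1)}_\nu](t)=\tfrac{2i}{\pi t}$ give $S^+_\lambda u_\lambda(r\omega)=\tfrac{i\pi}{2}J_\nu(\lambda)\,r^{-(n-2)/2}H^{(1)}_\nu(\lambda r)\,Y_m(\omega)$ for $1<r<2$, with $\nu=m+\tfrac{n-2}{2}\approx\lambda$. The turning point $r\approx\nu/\lambda\approx1$ sits on the source sphere, so the prefactor is resonant, $|J_\nu(\lambda)|\sim\lambda^{-1/3}$ (the Bessel transition-region value); and for $1+\lambda^{-2/3}<r<2$ one has $|H^{(1)}_\nu(\lambda r)|\sim(\lambda^2(r-1))^{-1/4}$, so $|S^+_\lambda u_\lambda(r\omega)|\sim\lambda^{-5/6}(r-1)^{-1/4}|Y_m(\omega)|$ there (the Airy layer $|r-1|\lesssim\lambda^{-2/3}$ contributing $O(\lambda^{-2})$ to the radial $L^2$-mass). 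Therefore $\|S^+_\lambda u_\lambda\|_{L^2(\Omega)}^2\sim\lambda^{-5/3}\|Y_m\|^2_{L^2(S^{n-1})}\sim\lambda^{-5/3}\|u_\lambda\|^2_{L^2(\pO)}$, showing $-5/6$ is sharp. (Sourcing from the outer sphere gives only $\lambda^{-1}$: a ray grazing $\pO$ on its concave side escapes across $\Omega$, whereas one grazing the convex side stays trapped in an $O(\lambda^{-2/3})$ whispering-gallery shell and contributes little $L^2(\Omega)$-mass.)
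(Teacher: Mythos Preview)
Your argument is correct and follows the paper's overall strategy closely: pass to the adjoint, separate an ``easy'' piece from a piece that is a good quasimode, and feed the latter into the Burq--G\'erard--Tzvetkov/Hassell--Tacy restriction estimates with exponents $\tfrac14$ and $\tfrac16$. The differences are in implementation rather than substance.

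For the upper bounds, the paper splits in \emph{physical space}: a spatial cutoff $\zeta(M^{-1}\lambda(x-y))$ at scale $\lambda^{-1}$ separates a near-diagonal piece $S^0$ (handled by Young's inequality on the explicit Hankel kernel, giving $\lambda^{-3/2}$) from an off-diagonal piece $\tilde S$; one then checks that $(-h^2\Delta-1)\tilde S u$ has $L^2$ norm $\lesssim h^2\|u\|$ and applies the restriction theorem. Your split is in \emph{frequency}: the elliptic region $\{|\xi|\not\sim\lambda\}$ plays the role of $S^0$, and frequency-localized limiting absorption handles $w_{\mathrm{hi}}$. Both routes land on the same restriction input. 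Your version is marginally cleaner conceptually (no Hankel asymptotics needed), but you should be careful that the quasimode restriction estimate you invoke is the \emph{local} one (norms on a bounded neighbourhood $U$), since $R^{-}_\lambda v$ is not in $L^2(\R^n)$ globally; the paper has the same issue, implicitly resolved by working on a fixed $\Omega_1\Supset\Omega$.

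For sharpness, the paper goes through the spectral measure $dE_\lambda$ via Stone's formula and computes on the Fourier side (flat case) or with an exact Rellich-type identity based on $[\Delta,r\partial_r]=2\Delta$ (annulus in $\R^2$). Your constructions attack $S^+_\lambda$ directly: a grazing Knapp packet for the flat case, and the Wronskian formula with transition-region Bessel asymptotics for the annulus. Your heuristics are sound and give the right exponents, though they are less rigorous as written --- in the flat case one has to justify the tube picture by stationary phase, and in the curved case one must control the Airy layer and the constant prefactors carefully. The paper's exact $L^2$ identity for the annulus sidesteps all asymptotic error bookkeeping, which is its main advantage; your approach has the advantage of working in any dimension without change.
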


In contrast to the case of single layer potentials, we obtain a uniform bound for double layer potentials in all domains. The sharp examples for the single layer potential all rely on concentration in the tangential direction (see Section \ref{sharpness}). However the symbol of $D_{\lambda}^{+}$ is zero in such tangential directions.  
\begin{theorem}[Boundedness and sharpness of semiclassical double layer potentials]\label{thm:DLP}
$$\|D^+_\lambda(u)\|_{L^2(\Omega)}\le c\|u\|_{L^2(\partial\Omega)},$$
where $c$ depends only on $\Omega$. Furthermore, the estimate is sharp if $\Omega$ is a disc.
\end{theorem}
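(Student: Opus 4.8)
The plan is to bound $\|D^+_\lambda\|_{L^2(\pO)\to L^2(\Omega)}$ by localising with a partition of unity, straightening $\pO$ in each chart, and then extracting the symbol of the resulting operator. The starting point is the exact identity $\partial_{\nu_y}K^+_\lambda(x-y)=\widetilde K_\lambda'(|x-y|)\,(y-x)\cdot\nu_y/|x-y|$, where $K^+_\lambda(z)=\widetilde K_\lambda(|z|)$: the scalar factor $(y-x)\cdot\nu_y/|x-y|=\cos\angle(\overrightarrow{yx},\nu_y)$ is the geometric damping alluded to before the statement --- it is $O(1)$ pointwise but vanishes exactly when the segment from $y$ to $x$ is tangent to $\pO$ at $y$, i.e.\ in the directions that produce the sharp examples for $S^+_\lambda$. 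I would split the kernel at $|x-y|\sim\lambda^{-1}$. On the near-diagonal piece the $C^{1,1}$ regularity of $\pO$ gives $|(y-x)\cdot\nu_y|\lesssim|x-y|^2$, so the kernel is $\lesssim|x-y|^{2-n}$ and a Schur test bounds this piece (in fact by $O(\lambda^{-1})$, so it is negligible).

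The far piece is the heart, and here I expect the flat model to be essentially explicit. Using the large-argument Hankel expansion, $\widetilde K_\lambda'(r)=\lambda^{(n-1)/2}r^{-(n-1)/2}e^{i\lambda r}a(\lambda r)$ with $a$ a symbol of order $0$, so that $D^+_\lambda$ becomes a semiclassical Fourier integral operator with phase $\lambda|x-y|$ whose amplitude carries the damping factor. For $\pO=\{x_n=0\}$, $\Omega=\{x_n>0\}$, since $\widehat{f\,d\sigma}(\xi)=\widehat f(\xi')$ depends only on the tangential frequency $\xi'$ and $\partial_{\nu_y}$ contributes a factor $i\xi_n$, a contour integral in $\xi_n$ gives $D^+_\lambda f(x',x_n)=\int e^{ix'\cdot\xi'}m(\xi',x_n)\widehat f(\xi')\,d\xi'$ with
$$m(\xi',x_n)=\begin{cases}-\pi\,e^{ix_n\sqrt{\lambda^2-|\xi'|^2}},&|\xi'|<\lambda,\\[1mm]-\pi\,e^{-x_n\sqrt{|\xi'|^2-\lambda^2}},&|\xi'|>\lambda.\end{cases}$$
The point is that the normal-derivative factor $i\xi_n=i\sqrt{\lambda^2-|\xi'|^2}$ cancels precisely the singular factor $(\lambda^2-|\xi'|^2)^{-1/2}$ that the outgoing resolvent produces on its characteristic variety --- the factor which blows up at the glancing set $|\xi'|=\lambda$ and forces the decaying bounds of Theorem~\ref{thm:SLP}. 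Hence $|m(\xi',x_n)|\le\pi$ uniformly, and Plancherel in $x'$ yields $\|D^+_\lambda f\|_{L^2(\{0<x_n<\delta\})}^2\le\pi^2\delta\,\|f\|_{L^2(\re^{n-1})}^2$.

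For a curved or otherwise general $\pO$ the same mechanism is in force: writing $D^+_\lambda f=-\operatorname{div}\bigl(R^+_\lambda((f\vec\nu)\,d\sigma)\bigr)$, the symbol of $\operatorname{div}\circ R^+_\lambda$ is a multiple of $\xi\cdot(\,\cdot\,)$, whose pairing with the conormal $\vec\nu$ vanishes on the glancing set; so in boundary normal coordinates $(y,t)$ with $t=\operatorname{dist}(\cdot,\pO)$ the operator has symbol $\sim e^{it\sqrt{\lambda^2-|\xi'|_g^2}}$ (and its evanescent analogue for $|\xi'|_g>\lambda$), bounded uniformly in $(y,t,\xi',\lambda)$; curvature enters only through the phase function and through lower-order corrections to $\vec\nu$ and the metric, so --- unlike for $S^+_\lambda$ --- there is no flat/curved dichotomy. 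The main obstacle is turning this microlocal heuristic into a rigorous uniform bound for a merely smooth (or piecewise smooth) boundary: one must control the errors from straightening, from the non-constancy of $\vec\nu$, and from the non-asymptotic regime $\lambda|x-y|\lesssim1$ of the Hankel function, and --- most delicately --- establish uniformity across the transition $|\xi'|_g\approx\lambda$ and check that the curvature correction to $(y-x)\cdot\nu_y$, which is $O(|y-y_*|^2)$ and hence not small in absolute terms, still produces only $L^2$-bounded operators. I would do this through semiclassical Fourier integral operator calculus, or more directly via a $TT^*$ argument with stationary phase in $x$ and a dyadic decomposition in $\cos\angle(\overrightarrow{yx},\nu_y)$; near the singular points of a piecewise smooth $\pO$ the contributions are handled by the classical, $\lambda$-independent theory of the double layer on Lipschitz domains.

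For sharpness on the disc $\Omega=\{|x|<1\}\subset\re^2$, take $u(r,\theta)=J_0(\lambda r)$ with $\lambda$ chosen along the sequence of zeros of $J_0'$ (equivalently of $J_1$), a radial Neumann eigenfunction, so that \eqref{eq:represent} gives $u=-D^+_\lambda(u|_{\pO})$. Standard Bessel identities and large-argument asymptotics give $\|u|_{\pO}\|_{L^2(\pO)}^2=2\pi J_0(\lambda)^2\asymp\lambda^{-1}$ and $\|u\|_{L^2(\Omega)}^2=\pi J_0(\lambda)^2\asymp\lambda^{-1}$, so $\|D^+_\lambda(u|_{\pO})\|_{L^2(\Omega)}/\|u|_{\pO}\|_{L^2(\pO)}$ stays bounded below along this sequence and the estimate cannot be improved.
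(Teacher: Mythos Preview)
Your near-diagonal bound relies on $|(y-x)\cdot\nu_y|\lesssim|x-y|^2$, but that inequality holds only when \emph{both} $x$ and $y$ lie on $\partial\Omega$; here $x\in\Omega$ is an interior point, and if $x$ sits along the inward normal at $y$ then $(y-x)\cdot\nu_y$ has size $|x-y|$, not $|x-y|^2$. The kernel is therefore only $\lesssim|x-y|^{-(n-1)}$, a straight Schur test fails (one of the two integrals is logarithmically divergent), and one needs a dyadic decomposition in $|x-y|$ as the paper does, which yields $O(\lambda^{-1/2})$ rather than your claimed $O(\lambda^{-1})$. That still suffices for the theorem, but the mechanism is not the geometric cancellation you invoke.

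More seriously, the far piece is --- as you yourself say --- only a heuristic beyond the flat model. Your half-space computation is correct and nicely exhibits the symbol cancellation, but the list of obstacles you give (straightening errors, curvature corrections to $\nu$, uniformity across $|\xi'|_g\approx\lambda$) is exactly the substance of the proof, and none of it is carried out. The paper sidesteps all of this by passing to the adjoint $(D^+_\lambda)^\star:L^2(\Omega)\to L^2(\partial\Omega)$ and observing that, off the diagonal, $(D^+_\lambda)^\star u=\partial_\nu v+\tilde E u$, where $v=\tilde S u$ is the same $O_{L^2}(\lambda^{-1})$ quasimode already built in the single-layer proof and $\tilde E$ is a localised error controlled by Young's inequality. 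The known restriction bound $\|\partial_\nu v\|_{L^2(\partial\Omega)}\lesssim\lambda\|v\|_{L^2}$ for normal derivatives of quasimodes (Tataru; Tacy) then gives $\|\partial_\nu v\|_{L^2(\partial\Omega)}\lesssim\|u\|_{L^2(\Omega)}$ directly, with no FIO calculus and no separate glancing analysis: the uniformity you flag as the main obstacle is already packaged inside that black box.

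Your sharpness example on the disc is correct; it is the $k=0$ case of the paper's, which takes $J_k(\lambda r)e^{ik\theta}$ with $\lambda=j'_{k,l}$ and $\lambda\sim 2k$, but either choice works.
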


The representation of eigenfunctions in \eqref{eq:represent} has applications in a variety of both theoretical and numerical studies. Hassell and Zelditch \cite{HZ} use it to prove the quantum ergodicity of boundary values of eigenfunctions. In particular, they express boundary traces of Dirichlet, Neumann, and Robin eigenfunctions as eigenfunctions of integral operators produced by semiclassical layer potentials. 

In a similar vein Toth and Zelditch \cite{TZ12, TZ13} recently applied these potentials to prove quantum ergodic restriction (QER) theorems on interior hypersurfaces. 

If the boundary $\pO$ is analytic, then there exist analytic continuations of $S^+_\lambda$ and $D^+_\lambda$ in the Grauert tube (a complex neighborhood of the real domain). Such complexification enables the study of zeros of eigenfunctions in the complex region (instead of in the real region), where it has a simpler characterization. For detailed discussion on the nodal intersection estimates, see Toth and Zelditch \cite{TZ09}, El-Hajj and Toth \cite{ET}. 

In star shaped domains Barnett and Hassell \cite{BH} develop a numerical technique for constructing Dirichlet eigenfunctions by solving a related eigenfunction problem on the boundary. They then use \eqref{eq:represent} to reconstruct interior eigenfunctions. Their technique allows them to control error on the boundary and so mapping norms on $S^{+}_{\lambda}$ control error in the interior.

In \cite[Remark 3.2]{BH}, the authors proposed the question of finding a bound on the $\lambda$-dependence of the single layer potential $S^+_\lambda$. In particular they ask is 
\begin{equation}\label{eq:BH}
\norm{S^{+}_{\lambda}}_{L^{2}(\partial\Omega)\to{}L^{2}(\Omega)}\lesssim{}\lambda^{-1}?
\end{equation}
Such a bound would imply that boundary error controls interior error with no loss, which would be optimal for their numerical technique. In fact, some results were achieved previously. Feng and Sheen \cite{FS} showed that the norm is uniformly bounded independent of $\lambda$. Spence \cite{Sp} improved  this to $\lambda^{-1/2}$.

Theorem \ref{thm:SLP} answers Barnett and Hassell's question in the negative for general (and even curved) domains. However, the estimate \eqref{eq:BH} might hold for strictly convex domains; see Conjecture \ref{conj:SLPconvex}.

Weaker estimates than those of Theorem \ref{thm:DLP} were obtained by Feng and Sheen \cite{FS} and Spence \cite{Sp}. Precisely, Feng and Sheen \cite{FS} proved $\|D_\lambda\|_{L^2(\pO)\to L^2(\Omega)}\lesssim\lambda$; while Spence \cite{Sp} improved to $\|D_\lambda\|_{L^2(\pO)\to L^2(\Omega)}\lesssim\lambda^{1/2}$. See also the survey article \cite{C-WGLS} for related results and their applications in numerical computations.

One may compare the high frequency ($\lambda\to{}\infty$) results of Theorems \ref{thm:SLP} and \ref{thm:DLP} with the case $\lambda=0$. There \eqref{eq:Green} reduces to the construction of a harmonic function $u$ in $\Omega$ by its boundary data. $K_0^+=N$ is the fundamental solution of the Laplacian: $-\Delta N(x)=\delta(x)$. The two convolution-type operators $S(f)$ and $D(f)$ mapping $L^2(\pO)$ to $L^2(\Omega)$ with kernels $N$ and $\partial_\nu N$ are the classical single and double layer potentials at zero frequency. The mapping properties of these layer potentials from boundary data to interior functions and related boundary value problems have been studied extensively over the past century. See \cite{FJR, JK, V} and \cite{CK, F} for a detailed discussion of this classical problem. 

In the case of classical single and double layer potentials, it is the regularity of the boundary that determines mapping properties, with the case of smooth boundaries being rather trivial. By contrast, in the high frequency limit $\lambda\to\infty$, the interest is not the boundedness of $S^+_\lambda$ and $D^+_\lambda$ for a particular $\lambda$, but rather the rate of decay of the mapping norms as $\lambda\to\infty$. In this limit, the problem is interesting even for smooth boundaries, as Theorem \ref{thm:SLP} shows, the rate of decay depends on the geometric properties of the boundaries.

The single and double layer operators $\mathcal S_\lambda^+$ and $\mathcal D_\lambda^+$ are restrictions of layer potentials $S_\lambda^+$ and $D_\lambda^+$ to the boundary, that is, $\Dl:f\to D^+_\lambda(f)|_{\pO}$ and $\Sl:f\to S^+_\lambda(f)|_{\pO}$. Galkowski's appendix to this paper provides estimates for both $\Dl$ and $\Sl$. These operators have been studied by other mathematicians and some results were known prior to the above estimates. Please refer to the appendix for a brief discussion. 

\begin{theoremnn}[Boundedness and sharpness of semiclassical layer operators]
$$
\|\Sl\|_{L^2(\partial\Omega)\to L^2(\partial\Omega)}\le
\begin{cases}
C\lambda^{-1/2}\log\lambda&\quad\text{in general domains};\\
C\lambda^{-2/3}\log\lambda&\quad\text{if $\pO$ is curved},
\end{cases}$$
and
$$
\|\Dl\|_{L^2(\partial\Omega)\to L^2(\partial\Omega)}\le
\begin{cases} 
C\lambda^{1/4}\log\lambda&\quad\text{in general domains};\\
C\lambda^{1/6}\log\lambda&\quad\text{if $\pO$ is curved}.
\end{cases}$$
Moreover, these estimates are sharp modulo the $\log\lambda$. 
\end{theoremnn}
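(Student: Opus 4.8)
The plan is to realize $\Sl$ and $\Dl$ as semiclassical oscillatory integral operators microlocalized near $\partial\Omega$ and to analyze them through a trichotomy of the tangential frequency $\xi'$ measured against the glancing hypersurface $\Sigma_\lambda=\{|\xi'|_g=\lambda\}\subset T^*\partial\Omega$. Working in boundary normal coordinates and inserting the Hankel asymptotics of $K^+_\lambda$, one first peels off two harmless pieces. The near-diagonal part $|x-y|\lesssim\lambda^{-1}$ has kernel comparable to the $\lambda=0$ fundamental solution (respectively its normal derivative, which on a smooth hypersurface is no more singular because $\nu_y\cdot(x-y)=O(|x-y|^2)$), so a Schur test gives a contribution $\lesssim\lambda^{-1}$ for $\Sl$ and $\lesssim1$ for $\Dl$ (the latter including the $\mp\tfrac12\mathrm{Id}$ jump term). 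On the elliptic part $|\xi'|\ge(1+\delta)\lambda$ the normal operator is invertible with exponentially decaying kernel, and repeated integration by parts yields an $O(\lambda^{-\infty})$ error. On the interior, or hyperbolic, region $|\xi'|\le(1-\delta)\lambda$ the full symbols are classical and bounded, namely by $C_\delta\lambda^{-1}(1-|\xi'|^2/\lambda^2)^{-1/2}$ for $\Sl$ and by $C_\delta$ for $\Dl$, so these regions already respect the asserted bounds.

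The substance of the argument is the glancing region $\big||\xi'|_g-\lambda\big|\le\delta\lambda$. I would decompose it dyadically in the distance to $\Sigma_\lambda$, writing $\Lambda_j=\{\,||\xi'|_g-\lambda|\sim2^{-j}\lambda\,\}$ for $0\le j\lesssim\log\lambda$, and estimate the piece localized to $\Lambda_j$ after rescaling to unit frequency scale. When $\partial\Omega$ carries a flat piece the rescaled operator is essentially a Fourier multiplier; the symbol of $\Sl$ on $\Lambda_j$ has size $\lambda^{-1}2^{j/2}$, the $j$-sum is geometric, and one recovers $\lambda^{-1/2}$, while the symbol of $\Dl$ vanishes identically on $\Sigma_\lambda$ (its full symbol is tangentially degenerate, exactly as noted for $D^+_\lambda$ before Theorem~\ref{thm:DLP}), which is why flat boundaries give the uniform bound for $\Dl$. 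When $\partial\Omega$ is curved the second fundamental form enters the phase of the rescaled operator as a genuinely nondegenerate correction, so the relevant one-dimensional oscillatory integral is of Airy type; exploiting this — by stationary phase with curvature, i.e. a Carleson--Sj\"olin/H\"ormander-type $L^2$ bound, or equivalently a second microlocalization at $\Sigma_\lambda$ — caps the $j$-sum at the Airy scale $2^{-j}\lambda\sim\lambda^{1/3}$ and improves the exponents to $\lambda^{-2/3}$ for $\Sl$, and, using in addition the vanishing of the symbol on $\Sigma_\lambda$, to $\lambda^{1/6}$ for $\Dl$; the intermediate exponent $\lambda^{1/4}$ for $\Dl$ in general domains comes from configurations where the boundary curvature degenerates. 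Summing the $O(\log\lambda)$ dyadic contributions costs the logarithm in the statement, which we expect to be an artifact of this gluing. (As a consistency check, these exponents are precisely what one obtains by feeding $S^+_\lambda u$ and $D^+_\lambda u$ into the Burq--G\'erard--Tzvetkov restriction estimate $\|v|_{\partial\Omega}\|_{L^2(\partial\Omega)}\lesssim\lambda^{1/4}\|v\|_{L^2}$ for Helmholtz quasimodes $v$ over a neighbourhood of $\partial\Omega$, improved to $\lambda^{1/6}$ when $\partial\Omega$ is curved, together with the interior bounds of Theorems~\ref{thm:SLP}--\ref{thm:DLP} and their exterior analogues; one could organize the proof this way at the cost of handling the conormal source these functions carry along $\partial\Omega$.)

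For sharpness modulo $\log\lambda$ one revisits the extremizers behind Theorems~\ref{thm:SLP} and~\ref{thm:DLP} and reads off their boundary traces: tangentially oscillating data $u=\chi(x')e^{i\lambda\,x'\cdot\omega}$ with $|\omega|$ just below $1$ on a flat piece saturates the general single-layer bound; whispering-gallery data on an annulus saturates the curved single-layer bound; and near-glancing data on a disc, respectively on a domain whose boundary curvature has a zero, saturates the curved, respectively general, double-layer bound. In each case the lower bound follows by testing the oscillatory integral against $u$ and evaluating the resulting stationary or Airy contribution.

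The main obstacle is the glancing analysis: identifying the correct model at $\Sigma_\lambda$ in the curved case (an Airy operator), extracting from it the sharp dependence on the distance to the glancing set and on the boundary curvature, and propagating this estimate uniformly enough through the $\sim\log\lambda$ dyadic scales that one loses only a single logarithm — together with the corresponding bookkeeping for $\Dl$, where the first-order vanishing of the symbol on $\Sigma_\lambda$ and the degenerate-curvature configurations must both be tracked.
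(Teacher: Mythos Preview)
Your main line of argument has a genuine gap in the treatment of $\Dl$. You analyze the operators through a tangential symbol and conclude that on a flat piece ``the symbol of $\Dl$ vanishes identically on $\Sigma_\lambda$\ldots which is why flat boundaries give the uniform bound for $\Dl$,'' with $\lambda^{1/4}$ then attributed to degenerate curvature. Both claims are wrong. The paper's sharp example for $\lambda^{1/4}$ takes $\partial\Omega\supset\Gamma_1\cup\Gamma_2$ with $\Gamma_1,\Gamma_2$ \emph{flat} and mutually perpendicular: data on $\Gamma_2$ at a strictly hyperbolic covector launches a ray that strikes $\Gamma_1$ tangentially. The point is that $\Dl$ is not a pseudodifferential operator on $\partial\Omega$; away from the diagonal it is an FIO carrying the billiard relation, and the dominant contribution comes from pairs $(x,y)$ where the ray leaves $y$ transversally and grazes $\partial\Omega$ at $x$. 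Your ``hyperbolic region: full symbol bounded by $C_\delta$'' captures only the pseudodifferential (diagonal) piece and misses this entirely. The same two-point mechanism drives the curved $\lambda^{1/6}$ example --- a curve paired with a piece of its evolute, so that the distance function between them is stationary to third order --- and your proposed saturators for $\Dl$ (near-glancing data on a disc, or a curvature zero) do not see it; indeed for a strictly convex $\Omega$ no ray from $\partial\Omega$ returns tangentially, and one expects $\Dl$ to be uniformly bounded there. For $\Sl$ your local glancing analysis happens to land on the correct exponents because the diagonal contribution dominates the FIO part, but a complete proof would still have to dispose of the latter.

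The paper bypasses the local/global distinction by precisely the route you mention as a consistency check and then set aside. Writing the bilinear form $\langle R^+_\lambda L^*(f\delta_{\Gamma_1}),\,g\delta_{\Gamma_2}\rangle$ via Plancherel as a one-dimensional integral in $r=|\xi|$ of products of spherical restrictions $\widehat{L^*(f\delta_{\Gamma_1})}|_{S^{d-1}_r}$ and $\widehat{g\delta_{\Gamma_2}}|_{S^{d-1}_r}$, the required inputs become the dual forms of the quasimode restriction bounds: $\|\widehat{g\delta_\Gamma}\|_{L^2(S^{d-1}_r)}\lesssim r^{1/4}\|g\|$ (or $r^{1/6}$ in the curved case) from Burq--G\'erard--Tzvetkov and Hassell--Tacy, and $\|\widehat{L^*(f\delta_\Gamma)}\|_{L^2(S^{d-1}_r)}\lesssim r\,\|f\|$ from the normal-derivative restriction theorem. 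The logarithm is then the single integral $\int_1^{M\lambda}|r-\lambda|^{-1}\,dr$. This argument is global from the start and never isolates an FIO component; your parenthetical alternative is in fact the paper's proof.
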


\subsection*{Connection with boundary estimates of eigenfunctions}
Because of \eqref{eq:represent}, there is a close relation between semiclassical layer potentials and boundary estimates of eigenfunctions.

\begin{itemize}
\item Dirichlet eigenfunction: $u$ satisfies $u=S^+_\lambda(\partial_\nu u)$. Bardos, Lebeau, and Rauch \cite{BLR} and Hassell and Tao \cite{HTao02, HTao10} proved that
$$\|u\|_{L^2(\Omega)}\approx\lambda^{-1}\|\partial_\nu u\|_{L^2(\pO)},$$
as $u=S^{+}_{\lambda}(\partial_{\nu}u)$ this implies that
$$\norm{S^{+}_{\lambda}}_{L^{2}(\partial\Omega)\to{}L^{2}(\Omega)}\ge c\lambda^{-1}.$$
Therefore the sharp examples for Theorem \ref{thm:SLP} that we produce in Section \ref{sharpness} are far from being normal derivatives of a Dirichlet eigenfunctions.

\item Neumann eigenfunction: $u$ satisfies $u=D^+_\lambda(u|_{\pO})$. Hence, as a corollary of Theorem \ref{thm:DLP}, we have
\begin{corollary}[Boundary estimate of Neumann eigenfunctions]
Let $u$ be a Neumann eigenfunction of $\Delta$ in $\Omega$ with eigenvalue $\lambda^2$. Then
$$\|u\|_{L^2(\Omega)}\le c\|u\|_{L^2(\pO)},$$
where $C$ is independent of $\lambda$.
\end{corollary}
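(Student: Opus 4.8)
The plan is to read the Corollary off directly from the representation formula \eqref{eq:represent} together with Theorem \ref{thm:DLP}, so the argument is essentially a one-liner. First I would recall that a Neumann eigenfunction $u$ satisfies $-\Delta u = \lambda^2 u$ in $\Omega$ and $\partial_\nu u = 0$ on $\partial\Omega$. Substituting $\partial_\nu u \equiv 0$ into \eqref{eq:Green} (equivalently \eqref{eq:represent}) kills the single layer term, leaving the clean identity $u = -D^+_\lambda(u|_{\partial\Omega})$ that was already recorded in the introduction.

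Then I would simply apply Theorem \ref{thm:DLP} with boundary data $f = u|_{\partial\Omega}$. This yields
$$\|u\|_{L^2(\Omega)} = \|D^+_\lambda(u|_{\partial\Omega})\|_{L^2(\Omega)} \le c\,\|u|_{\partial\Omega}\|_{L^2(\partial\Omega)} = c\,\|u\|_{L^2(\partial\Omega)},$$
where $c$ depends only on $\Omega$; in particular it is independent of $\lambda$, which is exactly the assertion. Note that this uses the \emph{uniform} (curvature-independent) bound of Theorem \ref{thm:DLP}, so the Corollary holds in all piecewise smooth bounded domains, not merely curved ones.

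The only point requiring a little care is the applicability of \eqref{eq:Green} to $u$: one needs enough regularity for the traces $u|_{\partial\Omega}$ and $\partial_\nu u|_{\partial\Omega}$ to make sense in $L^2(\partial\Omega)$ and for Green's identity to be valid. Since $u$ is a (smooth up to the boundary, when $\partial\Omega$ is smooth; handled face-by-face when $\partial\Omega$ is only piecewise smooth) eigenfunction, this is standard elliptic regularity. Consequently there is no genuine obstacle in this proof — all the analytic content, namely the $L^2(\partial\Omega)\to L^2(\Omega)$ boundedness of $D^+_\lambda$, has already been absorbed into Theorem \ref{thm:DLP}.
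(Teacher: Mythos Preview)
Your proposal is correct and matches the paper's approach exactly: the Corollary is stated immediately after Theorem \ref{thm:DLP} as a direct consequence of the identity $u=-D^+_\lambda(u|_{\partial\Omega})$ for Neumann eigenfunctions combined with the uniform bound on $D^+_\lambda$. The paper gives no further argument beyond this.
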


Tataru \cite{Ta} proved that
\begin{equation}\label{eq:Tataru}
\|u\|_{L^2(\pO)}\lesssim\lambda^\frac13\|u\|_{L^2(\Omega)}.
\end{equation}
Putting the above results together gives both lower and upper bounds of the boundary estimates of Neumann eigenfunctions:
$$\|u\|_{L^2(\Omega)}\lesssim\|u\|_{L^2(\pO)}\lesssim\lambda^\frac13\|u\|_{L^2(\Omega)}.$$
Furthermore, this suggests that the sharp examples for Tataru's estimate are far from saturating the double layer potential estimates. We will discuss the sharpness of above inequalities in Section \ref{sharpness}, and construct examples for which saturation is achieved.
\end{itemize}

\subsection*{Connection with interior hypersurface restriction estimates of eigenfunctions}

One can similarly define the incoming resolvent 
$$R^-_\lambda=\left[-\Delta-(\lambda-i0)^2\right]^{-1}$$ 
and $K_\lambda^-$ as its kernel. Then the above theorem is also valid with the same norm for $S^-_\lambda$. We will drop the $\pm$ sign in the $L^2$ estimates without causing any confusion. If we write $dE_\lambda$ as the spectral measure operator $\delta(-\Delta-\lambda^2)$, then Stone's formula (See e.g. \cite[Chapter XIV]{H83}.)
\begin{equation}\label{eq:Stone}
dE_\lambda=\frac{R^+_\lambda-R^-_\lambda}{2\pi i}
\end{equation}
immediately implies the same norm bounds from $L^2(\pO)$ to $L^2(\Omega)$. Moreover, the sharp examples of these bounds on spectral measure operators automatically provides the sharpness for single layer potentials.

\begin{proposition}[Boundedness and sharpness of spectral measure operators]\label{prop:dE}\hfill
\begin{enumerate}[(i).]
\item In a general domain $\Omega$,
$$\|dE_\lambda(ud\sigma)\|_{L^2(\Omega)}\le c\lambda^{-\frac34}\|u\|_{L^2(\partial\Omega)},$$
and the norm is sharp if the boundary $\pO$ contains a flat piece.
\item If $\partial\Omega$ is curved, then
$$\|dE_\lambda(ud\sigma)\|_{L^2(\Omega)}\le c\lambda^{-\frac56}\|u\|_{L^2(\partial\Omega)}.$$
and the norms are sharp if $\Omega$ is an annulus.
\end{enumerate}
\end{proposition}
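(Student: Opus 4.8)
My plan is to get the upper bounds for free from Theorem \ref{thm:SLP} and Stone's formula \eqref{eq:Stone}, and then to establish sharpness by exhibiting explicit saturating data — indeed the very examples Section \ref{sharpness} constructs for Theorem \ref{thm:SLP}. For the upper bounds: writing $S^-_\lambda(u) = R^-_\lambda(u\,d\sigma)$ and using that $-\Delta$ is real, so that the incoming kernel is $K^-_\lambda = \overline{K^+_\lambda}$, one has $S^-_\lambda(u) = \overline{S^+_\lambda(\bar u)}$ and hence $\|S^-_\lambda\|_{L^2(\pO)\to L^2(\Omega)} = \|S^+_\lambda\|_{L^2(\pO)\to L^2(\Omega)}$. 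Since $2\pi i\,dE_\lambda(u\,d\sigma) = S^+_\lambda(u) - S^-_\lambda(u)$ by \eqref{eq:Stone}, the triangle inequality gives $\|dE_\lambda(u\,d\sigma)\|_{L^2(\Omega)} \le \pi^{-1}\|S^+_\lambda\|_{L^2(\pO)\to L^2(\Omega)}\|u\|_{L^2(\pO)}$, and the two cases of Theorem \ref{thm:SLP} yield the $\lambda^{-3/4}$ and $\lambda^{-5/6}$ bounds.

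All the content is thus in the sharpness. The organizing principle is that $dE_\lambda$ has Fourier multiplier $\delta(|\xi|^2-\lambda^2)$, so $dE_\lambda(u\,d\sigma)$ is the Fourier extension from the sphere $\{|\xi|=\lambda\}$ of the restriction of $\widehat{u\,d\sigma}$ to that sphere; concentrating $u$ tangentially makes this extension graze $\pO$ and decay into $\Omega$ only on a thin slab, which is what loses powers of $\lambda$. For the \emph{flat case} I would normalize so that $\pO$ contains a neighbourhood of $0$ in $\{x_n=0\}$ with $\Omega$ locally in $\{x_n>0\}$, fix $\chi\in C^\infty_c$ supported there, and take $u_\lambda(y') = e^{i\lambda y_1}\chi(y')$, so $\|u_\lambda\|_{L^2(\pO)}\approx 1$. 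Then $\widehat{u_\lambda\,d\sigma}(\xi) = \hat\chi(\xi_1-\lambda,\xi'')$ is independent of $\xi_n$ and rapidly decaying off $\{\xi_1=\lambda,\ \xi''=0\}$, so only the cap of $\{|\xi|=\lambda\}$ near $\lambda e_1$ contributes; there $\xi_1-\lambda \approx -\tfrac{1}{2\lambda}(|\xi''|^2+\xi_n^2)$, so the effective range is $|\xi''|\lesssim 1$, $|\xi_n|\lesssim\lambda^{1/2}$, and rescaling $\xi_n = \lambda^{1/2}\eta$ turns the cap integral into $\lambda^{-1/2}e^{i\lambda x_1}G(x'',\lambda^{1/2}x_n)$ with $G$ Schwartz and $G(0,0)\neq 0$. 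Hence $|dE_\lambda(u_\lambda\,d\sigma)|\approx\lambda^{-1/2}$ on a slab $\{|x_1|,|x''|\lesssim 1,\ 0<x_n\lesssim\lambda^{-1/2}\}\subset\Omega$, so $\|dE_\lambda(u_\lambda\,d\sigma)\|_{L^2(\Omega)}\gtrsim\lambda^{-1/2}\cdot\lambda^{-1/4}=\lambda^{-3/4}$, matching the upper bound.

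For the \emph{annulus} $\Omega=\{1<|x|<2\}$ I would take $u=e^{im_0\theta}$ on the inner circle $\{|x|=1\}$ (a degree-$m_0$ spherical harmonic when $n\ge 3$) with $m_0=\lfloor\lambda\rfloor$, slightly detuned to $m_0 = \lambda-c_0\lambda^{1/3}+O(1)$ so as to land on the first Airy peak, where $|J_{m_0}(\lambda)|\approx\lambda^{-1/3}$. Separation of variables — Graf's addition theorem applied to $J_0(\lambda|x-y|)$, the radial part of the kernel of $dE_\lambda$ — gives $dE_\lambda(u\,d\sigma)(r,\theta) = \tfrac12 J_{m_0}(\lambda)\,J_{m_0}(\lambda r)\,e^{im_0\theta}$ for $1<r<2$. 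Since $m_0\approx\lambda$, the uniform turning-point (Airy) asymptotics keep $J_{m_0}(\lambda r)$ of size $\approx\lambda^{-1/3}$ across the layer $|r-1|\lesssim\lambda^{-2/3}$ and of decaying oscillatory size beyond it, so $\int_1^2 |J_{m_0}(\lambda r)|^2\,r\,dr\approx\lambda^{-1}$. Therefore $\|dE_\lambda(u\,d\sigma)\|_{L^2(\Omega)}^2 \approx |J_{m_0}(\lambda)|^2\int_1^2|J_{m_0}(\lambda r)|^2\,r\,dr \approx \lambda^{-2/3}\cdot\lambda^{-1} = \lambda^{-5/3}$, i.e.\ the norm is $\approx\lambda^{-5/6}\|u\|_{L^2(\pO)}$.

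The hard part is not the (essentially free) upper bound but these two sharpness computations: in the flat case, justifying the wave-packet/stationary-phase analysis that localizes $dE_\lambda(u_\lambda\,d\sigma)$ to the $\lambda^{-1/2}$-slab and checking $G(0,0)\neq 0$; in the annulus case, invoking the uniform Airy asymptotics of Bessel functions in the regime order $\approx$ argument to extract simultaneously the $\lambda^{-1/3}$ peak of $J_{m_0}(\lambda)$ and the $\lambda^{-1}$ size of $\int_1^2|J_{m_0}(\lambda r)|^2\,dr$, with the corresponding spherical-harmonic bookkeeping when $n\ge 3$. Both are worked out in detail in Section \ref{sharpness}. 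Finally, because $\|S^+_\lambda\|=\|S^-_\lambda\|$ and $2\pi i\,dE_\lambda(u\,d\sigma)=S^+_\lambda(u)-S^-_\lambda(u)$, each lower bound above forces the same lower bound on $\max(\|S^+_\lambda(u)\|_{L^2(\Omega)},\|S^-_\lambda(u)\|_{L^2(\Omega)})$, hence on $\|S^+_\lambda\|_{L^2(\pO)\to L^2(\Omega)}$ — which is precisely why the sharpness claims in Theorem \ref{thm:SLP} follow from those for $dE_\lambda$.
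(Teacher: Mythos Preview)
Your upper-bound argument via Stone's formula and Theorem~\ref{thm:SLP} is exactly the route the paper takes (the paper also records an alternative self-contained proof of \eqref{dEstarest} via the FIO bound \eqref{eq:FIO} together with the hypersurface restriction estimates of \cite{BGT,Hu}, but this is presented chiefly as motivation for the strategy in Section~\ref{sec:SLP}). Your sharpness constructions are correct but differ in execution from those in Section~\ref{sharpness}. In the flat case you and the paper use essentially the same data $e^{i\lambda y_1}\chi(y')$, but where you argue by physical-space stationary phase to localize $dE_\lambda(u_\lambda\,d\sigma)$ on a $\lambda^{-1/2}$-slab (which leaves you to verify $G(0,0)\neq 0$), the paper works entirely on the Fourier side: it arranges $\hat f_\lambda\ge 0$ (Lemma~\ref{constructf}), pairs against an auxiliary $g$ with $\hat g\ge c$ near the origin (Lemma~\ref{constructg}), and bounds $[dE_\lambda(f_\lambda\,d\sigma)]^\wedge\!*\hat g$ from below on a region $G_\lambda$ of volume $\sim\lambda^{1/2}$, thereby avoiding any nonvanishing check. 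In the annulus both arguments use angular modes $e^{ik\theta}$, but the paper picks $\lambda=j_{k,1}$ and computes $\int_{|x|<R}|u|^2$ \emph{exactly} via the commutator identity $[\Delta,r\partial_r]=2\Delta$ and Green's formula, yielding the closed form \eqref{L2}; your direct Airy-asymptotic estimate of $|J_{m_0}(\lambda)|\approx\lambda^{-1/3}$ and $\int_1^2|J_{m_0}(\lambda r)|^2\,r\,dr\approx\lambda^{-1}$ is equally valid and arguably more transparent, though the paper's exact identity has the bonus of simultaneously producing $\|u\|_{L^2(B_1)}\sim\lambda^{-1}$ (used later for Conjecture~\ref{conj:SLPconvex}) and $\|u\|_{L^2(B_2)}\sim\lambda^{-5/6}$ from a single formula.
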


Notice that the kernel of $dE_\lambda$ 
$$\tilde K_\lambda=\frac{K^+_\lambda-K^-_\lambda}{2\pi i}$$
satisfies $(-\Delta-\lambda^2)\tilde K_\lambda=0$.  To estimate the norm of 
$$dE_\lambda(\cdot\,d\sigma):L^2(\pO)\to L^2(\Omega),$$
we consider the adjoint operator $(dE_\lambda)^\star$ with the same norm. In particular, the estimates of $dE_\lambda$ in Proposition \ref{prop:dE} is equivalent to
\begin{equation}\|(dE_\lambda)^\star(u)\|_{L^2(\pO)}\le
\begin{cases}
c\lambda^{-\frac34}\|u\|_{L^2(\Omega)}&\quad\text{in general domains};\\
c\lambda^{-\frac56}\|u\|_{L^2(\Omega)}&\quad\text{if $\pO$ is curved}.
\end{cases}\label{dEstarest}\end{equation}
We provide a proof of \eqref{dEstarest} here. It also motivates the strategy of the main proof of Theorem \ref{thm:SLP}. Write $v=(dE_\lambda)^\star(u)$, then $v$ is an eigenfunction in $\R^n$. In particular, let $\Omega_1$ be a compact set such that $\Omega\Subset\Omega_1\Subset\R^n$. Then $v$ is an eigenfunction in $\Omega_1$, and $\pO$ can be regarded as an interior hypersurface in $\Omega_1$. It is a classical result in scattering theory that $(dE_\lambda)^\star:L^2(\Omega)\to L^2(\Omega_1)$ is bounded. In fact, from the semiclassical Fourier integral operator theory, (See e.g. \cite{RT}.) we have
\begin{equation}\label{eq:FIO}
\|v\|_{L^2(\Omega_1)}\le c\lambda^{-1}\|u\|_{L^2(\Omega)}.
\end{equation}
Since $\pO\subset\Omega_1$, we can use the interior hypersurface restriction estimate for $v$ from \cite{BGT, Hu}:
$$\|v\|_{L^2(\pO)}\le c\lambda^\frac14\|v\|_{L^2(\Omega)}\leq{}c\lambda^{-\frac34}\|u\|_{L^{2}(\Omega)}$$
on general $\pO$, and
$$\|v\|_{L^2(\pO)}\le c\lambda^\frac16\|v\|_{L^2(\Omega)}\leq{}c\lambda^{-\frac56}\|u\|_{L^{2}(\Omega)}$$
if $\pO$ is curved. 

\subsection*{Connection with interior hypersurface estimates of quasimodes and strategy of the proofs}
Consider the adjoint operator
$$(S^+_\lambda)^\star:L^2(\Omega)\to L^2(\pO).$$
Unlike $(dE_\lambda)^{\star}(u)$, $(S^{+}_{\lambda})^{\star}(u)$ is not an eigenfunction on the whole space $\R^n$. The failure of $(S^{+}_{\lambda})^{\star}(u)$ to be an eigenfunction arises from a singularity in its kernel at the diagonal.    Our strategy is therefore to divide the kernel into near-diagonal and off-diagonal parts. The near-diagonal part admits better bound than required, and the off-diagonal part while not an exact eigenfunction is a good approximate eigenfunction (or quasimode) which can be treated within the semiclassical framework.

To use the the semiclassical framework we set $h=\lambda^{-1}$, then for
 $$p(x,hD)=(-h^{2}\Delta-1)$$
 Laplacian eigenfunctions satisfy $p(x,hD)u=0$.  Given any function $v$ we may measure the quasimode error 
$$E[v]=(-h^{2}\Delta-1)v$$
See \cite[Section 7.4.1]{Z} for more details on quasimodes. 
Restrictions of  quasimodes to hypersurfaces are studied in  \cite{T} and  \cite{HTacy}.    Therefore we are able to reduce the problem of operator norm estimates to that of estimating the quasimode error of the off-diagonal contribution.  

A similar strategy applies to double layer potential $(D^+_\lambda)^\star$, as the off-diagonal part on the boundary resembles the normal derivative of an $O_{L^2}(h)$ quasimode, and therefore we can use the result on Neumann data restriction estimates in \cite{T14}. 

\subsection*{Semiclassical interpretation}
Here we provide a semiclassical description of the problem and our approach. While this description is not strictly necessary to prove Theorems \ref{thm:SLP} and \ref{thm:DLP} it allows us to develop a useful heuristic that gives us insight into the role of geometry in these estimates.  In the semiclassical setting (with $h=\lambda^{-1}$) Laplacian eigenfunctions are solutions to
$$p(x,hD)u=(-h^{2}\Delta-1)u=0,$$
where $p(x,hD)$ has symbol $p(x,\xi)=|\xi|^{2}-1$. Similarly \eqref{eq:fund} can be written as
$$p(x,hD)K^{+}_{h^{-1}}(x)=h^{2}\delta(x).$$
H\"ormander's theory on propagation of singularities asserts that
\begin{equation}\label{eq:WF}
WF_h(K)\setminus WF_h(g)\subset\{(x,\xi)\in T^*\R^n:p(x,\xi)=0\}=\{(x,\xi)\in T^*\R:|\xi|=1\}=S^*\R^n,
\end{equation}
which means that $WF_h(K)\setminus WF_h(g)$ is in the bicharacteristic variety of $p$, and furthermore it is invariant under the Hamiltonian flow $\Phi^t$ of $p$. Here, $WF_h$ is the semiclassical wavefront set, $T^*\R^n$ and $S^*\R^n$ are the cotangent and cosphere bundles of $\R^n$, and $\Phi^t$ of $p$ is the geodesic flow in $\R^n$. See \cite{H85, Z} for a complete discussion of the theory.

This framework provides an heuristic to understand the improvement in Theorem \ref{thm:SLP} for curved domains. 

The dominating singularities of $S^{+}_\lambda$ propagate through the bicharacteristic flowout, i.e., the geodesic flow. Since $f$ is supported on $\pO$, the dominating singularities propagate along the lines tangent to the boundary. Therefore it is natural to expect worse estimates in the flat case where tangent lines coincide with the boundary.

Next we consider the semiclassical description of the outgoing and incoming resolvents $R_\lambda^\pm$ and the spectral measure $dE_\lambda$. From the intersecting Lagrangian distribution theory introduced in \cite{MU} (for the semiclassical version see \cite[Appendix A]{Galk}), $R^+_\lambda$ is an intersecting Lagrangian distribution associated to two Lagrangian submanifolds \cite[Theorem 3]{Galk}:
\begin{itemize}
\item The conormal bundle to the diagonal,
$$L_1=\{(x,\xi,y,\eta)\in T^*\R^n\times T^*\R^n:x=y,\xi=\eta\},$$ 
which is the lift of $WF_h(g)$ in \eqref{eq:WF} from in $T^*\R^n$ to in $T^*\R^n\times T^*\R^n$;
\item The bicharacteristic flowout $\Phi^t$ in the positive direction from the intersection of $L_1$ and the bicharacteristic variety $S^*\R^n\times S^*\R^n$,
$$L_2=\{(x,\xi,y,\eta)\in T^*\R^n\times T^*\R^n:\xi=\eta, |\eta|=1,x=y+t\eta,t\ge0\}.$$ 
\end{itemize} 

$R^-_\lambda$ is the same except it would be the bicharacteristic flowout in the negative direction. When we subtract them in \eqref{eq:Stone}, the diagonal part cancels and $dE_\lambda$ is associated to the flowout $L_0$ in both directions, from the intersection of $L_1$ and the characteristic variety:
$$L_0=\{(x,\xi,y,\eta)\in T^*\R^n\times T^*\R^n:\xi=\eta,|\eta|=1,x=y+t\eta,t\in\R\}.$$  

The above characterization can also been seen in the proof of Theorem \ref{thm:SLP} in Section 2, as we cut the kernel into near-diagonal and off-diagonal parts, which correspond to the two Lagrangian submanifolds.

\subsection*{Organisation of the paper}
In Section \ref{sec:SLP}, we prove Theorem \ref{thm:SLP} and in Section \ref{sec:DLP} we prove Theorem \ref{thm:DLP}. In Section \ref{sharpness}, we show that all of these estimates are essentially sharp, and then give some further remarks concerning the relation between these bounds and the convexity of the domain. In the appendix, we prove the mapping norms of semiclassical layer operators and show that the estimates are nearly sharp.

Throughout this paper, $A\lesssim B$ ($A\gtrsim B$) means $A\le cB$ ($A\ge cB$) for some constant $c$ depending only on the domain, in particular, independent of $\lambda$; $A\approx B$ means $A\lesssim B$ and $B\lesssim A$; the constants $c$ and $C$ may vary from line to line.

\subsection*{Acknowledgements}
The problems considered in this paper were brought to us by Andrew Hassell, we  thank him for the helpful discussion throughout the preparation. We also thank Alex Barnett for informing us the related work of Chandler-Wilde et al. J.G. would like to Maciej Zworski for valuable guidance, Hart Smith for discussion of sharpness of single layer operator estimates, and Andrew Hassell for discussion of sharpness of the double layer operator estimates. J.G. is grateful to the National Science Foundation for support under the National Science Foundation Graduate Research Fellowship Grant No. DGE 1106400 and grant DMS-1201417. X. H. acknowledges the support of the Australian Research Council through Discovery Project DP120102019.

\section{Boundedness of semiclassical single layer potentials}\label{sec:SLP}

We aim to use previously know bounds for restriction of quasimode to hypersurfaces, both in the general and curved cases. We want mapping norm bounds for $S^{+}_{\lambda}:L^{2}(\partial\Omega)\to{}L^{2}(\Omega)$. However to use prior results on restriction of quasimodes we actually study the adjoint operator $(S^{+}_{\lambda})^{\star}:L^{2}(\Omega)\to{}L^{2}(\partial\Omega)$. Now
\begin{equation}(S^{+}_{\lambda})^{\star}u(x)=\int_{\Omega}K^{\star}_{\lambda}(x-y)u(y)dy,\label{Sstar}\end{equation}
where
$$(-\Delta-\lambda^{2})K^{\star}_{\lambda}=\delta(x).$$
Therefore if $R_{\partial\Omega}$ is the restriction operator to the boundary of $\Omega$ we must  prove $L^{2}(\Omega)\to{}L^{2}(\partial\Omega)$ estimates for $R_{\partial\Omega}S^{+}_{\lambda}$. We will do this by constructing an auxiliary quasimode $v$ defined on $\R^{n}$, for which we know the restriction bounds, then the problem reduces to finding the $L^{2}$ norm and the quasimode error of $v$. 

To begin we excise the diagonal of $(S^{+}_{\lambda})^{\star}$ that is let $\zeta:\R^{n}\to{}\R^{+}$ be a smooth cut off function equal to one in $|x-y|\leq{}1$ and supported in $|x-y|\leq{}2$. Then we decompose $(S^{+}_{\lambda})^{\star}$ as
\begin{equation}
(S^{+}_{\lambda})^{\star}=S^{0}+\tilde{S},\label{decomposition}
\end{equation}
where
\begin{equation}S^{0}u(x)=\int_{\Omega}K^{\star}_{\lambda}(x-y)\zeta\left(M^{-1}\lambda(x-y)\right)u(y)dy,\label{S0def}\end{equation}
\begin{equation}
\tilde{S}u(x)=\int_{\Omega}K^{\star}_{\lambda}(x-y)\left(1-\zeta\left(M^{-1}\lambda(x-y)\right)\right)u(y)dy.\label{Stildedef}\end{equation}
We first show that $S^{0}$ has a better $L^{2}(\Omega)\to{}L^{2}(\partial\Omega)$ mapping norm than predicted by Theorem \ref{thm:SLP} and therefore we may focus on  the mapping norm of $\tilde{S}$.

\begin{proposition}\label{S0prop}
Let $S^{0}$ be as defined in \eqref{S0def}, then
\begin{equation}\|S^{0}u\|_{L^{2}(\partial\Omega)}\lesssim{}\lambda^{-\frac32}\|u\|_{L^{2}(\Omega)}.\label{S0bound}\end{equation}
\end{proposition}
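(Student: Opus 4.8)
The plan is to bound $S^0$ by a direct kernel estimate (Schur test / Young-type inequality), exploiting that the cutoff $\zeta(M^{-1}\lambda(x-y))$ confines the integration to the region $|x-y|\lesssim M\lambda^{-1}=Mh$, on which the fundamental solution $K^\star_\lambda$ has an explicit, essentially scale-invariant singularity. First I would recall the small-argument asymptotics of the Hankel function entering $K^+_\lambda$: for $|x-y|\le 2M/\lambda$ the kernel behaves, up to the harmless oscillatory factor, like the zero-frequency Newtonian kernel, i.e. $|K^\star_\lambda(x-y)|\lesssim |x-y|^{2-n}$ for $n\ge 3$ (with the $\log$ modification when $n=2$), since in this regime $\lambda|x-y|\lesssim M$ is bounded. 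Thus on the support of the cutoff we simply have $|K^\star_\lambda(x-y)\,\zeta(M^{-1}\lambda(x-y))|\lesssim |x-y|^{2-n}\mathbf{1}_{|x-y|\le 2M/\lambda}$.

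The operator in question maps $L^2(\Omega)$ to $L^2(\partial\Omega)$, so the relevant kernel is $k(x,y)=K^\star_\lambda(x-y)\zeta(M^{-1}\lambda(x-y))$ with $x\in\partial\Omega$ and $y\in\Omega$. I would apply Schur's test with weight one: the two quantities to control are $\sup_{x\in\partial\Omega}\int_\Omega |k(x,y)|\,dy$ and $\sup_{y\in\Omega}\int_{\partial\Omega}|k(x,y)|\,d\sigma(x)$. For the first, integrating $|x-y|^{2-n}$ over the $n$-dimensional ball of radius $2M/\lambda$ gives a factor $\lambda^{-2}$ (the integral $\int_{|z|\le R}|z|^{2-n}dz\sim R^2$). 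For the second, integrating $|x-y|^{2-n}$ over an $(n-1)$-dimensional surface patch of diameter $\lesssim M/\lambda$ — using that $\partial\Omega$ is smooth, hence locally a graph with bounded geometry — gives $\int_{|z'|\le R,\,z'\in\R^{n-1}}|z'|^{2-n}dz'\sim R$, i.e. a factor $\lambda^{-1}$. Schur's lemma then yields operator norm $\lesssim (\lambda^{-2})^{1/2}(\lambda^{-1})^{1/2}=\lambda^{-3/2}$, which is exactly \eqref{S0bound}.

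The main technical point to get right — and the only place any care is needed — is the behaviour of $K^\star_\lambda$ near the diagonal and the uniformity of the surface integral bound. For the kernel, one must check that the leading singularity really is $|x-y|^{2-n}$ uniformly for $\lambda|x-y|$ in a bounded range, handling separately the genuinely singular near-diagonal part $\lambda|x-y|\le 1$ (where the Hankel asymptotics $H^{(1)}_\nu(r)\sim r^{-\nu}$ give the Newtonian behaviour after the $\lambda^{(n-2)/2}$ prefactor cancels) and the annular part $1\le\lambda|x-y|\le 2M$ (where $|K^\star_\lambda|\lesssim \lambda^{(n-2)/2}|x-y|^{-(n-2)/2}\lesssim |x-y|^{2-n}$ since $\lambda|x-y|$ is bounded, with an extra harmless factor for $n=2$). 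For the surface integral one uses that locally $\partial\Omega=\{(z',\varphi(z'))\}$ with $\varphi$ smooth and $\nabla\varphi$ bounded, so $|x-y|\ge |x'-y'|$ and the surface measure is comparable to Lebesgue measure on the $(n-1)$-plane; the resulting one-dimensional gain in the homogeneity exponent is precisely the source of the asymmetry $\lambda^{-2}$ vs.\ $\lambda^{-1}$. I expect no serious obstacle here: this is a soft estimate, and indeed the proposition asserts a bound strictly better than the $\lambda^{-3/4}$ (resp.\ $\lambda^{-5/6}$) target of Theorem \ref{thm:SLP}, so one has room to spare and no oscillation needs to be exploited.
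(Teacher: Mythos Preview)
Your proposal is correct and essentially identical to the paper's proof: both bound the kernel by $|x-y|^{2-n}$ on the support of the cutoff (with the logarithmic variant in dimension two), then apply Schur/Young with the $n$-dimensional integral over $\Omega$ giving $\lambda^{-2}$ and the $(n-1)$-dimensional integral over $\partial\Omega$ giving $\lambda^{-1}$. The paper is somewhat terser on the Hankel asymptotics and the surface-integral step, but the argument is the same.
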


\begin{proof}
We use the explicit respresntation of $K_{\lambda}^{\star}$ as a Hankel function. If $n\ge3$, we have that the kernel of $S^{0}$, $K^{0}(x,y)$ has the bounds
$$|K^{0}(x,y)|\leq{}|x-y|^{-(n-2)},$$
and is supported in $|x-y|\leq{}M\lambda^{-1}$. Fixing $x$ we have
$$\|K^{0}(x,\cdot)\|_{L^{1}}\lesssim\int_{0}^{M\lambda^{-1}}r^{-(n-2)}r^{n-1}dr$$
$$\lesssim{}C_{M}\lambda^{-2}.$$
Conversely fixing $y$ we have
$$\|K^0(\cdot,y)\|_{L^{1}}\lesssim\int_{0}^{M\lambda^{-1}}r^{-(n-2)}r^{n-2}dr$$
$$\lesssim{}C_{M}\lambda^{-1}.$$
Therefore by Young's inequality
$$\|S^{0}u\|_{L^{2}(\partial\Omega)}\lesssim{}C_{M}\lambda^{-3/2}\|u\|_{L^{2}(\Omega)}$$
which is better than \eqref{S0bound}.

In $\R^2$, if $|x-y|\leq{}M\lambda^{-1}$
$$|K^{0}(x,y)|\leq{}\log\left(\lambda|x-y|\right)\leq{}C_{\varepsilon}\left(\lambda|x-y|\right)^{-\varepsilon}$$
for any $\varepsilon>0$. The same application of Young's inequality implies
$$\|S^{0}u\|_{L^{2}(\partial\Omega)}\lesssim{}C_{M,\varepsilon}\lambda^{-3/2}\|u\|_{L^{2}(\Omega)}.$$
\end{proof}

We now focus on the operator $\tilde{S}$. Let
$$v=\tilde{S}u.$$
We will treat $v$ as a quasimode of $(-h^{2}\Delta-1)$. Accordingly let
$$E[v]=(-h^{2}\Delta-1)v.$$
From \cite{T} and \cite{HTacy}, we know that
\begin{equation}\norm{v}_{L^{2}(\partial\Omega)}\lesssim{}h^{-\frac{1}{4}}\left[\norm{v}_{L^{2}(\R^{n})}+h^{-1}\norm{E[v]}_{L^{2}(\R^{n})}\right]\label{generalrestric}\end{equation}
in the general case and
\begin{equation}\norm{v}_{L^{2}(\partial\Omega)}\lesssim{}h^{-\frac{1}{6}}\left[\norm{v}_{L^{2}(\R^{n})}+h^{-1}\norm{E[v]}_{L^{2}(\R^{n})}\right]\label{curverestric}\end{equation}
in the case where $\partial\Omega$ is curved. We can also obtain these bounds from \cite{Ta} by considering the function $e^{\frac{i}{h}t}v$ which is an approximate solution to the wave equation. It is known that $(S^{+}_{\lambda})^{\star}$ has mapping norm $\lambda^{-1}$ from $L^{2}(\Omega)\to{}L^{2}(\R^{n})$, see for example \cite{VZ}. By the arguments of Proposition \ref{S0prop} $S^{0}$ has mapping norm $\lambda^{-2}$ from $L^{2}(\Omega)\to{}L^{2}(\R^{n})$ therefore as 
$$\tilde{S}=(S^{+}_{\lambda})^{\star}-S^{0},$$
$$\norm{v}_{L^{2}(\R^{n})}=\norm{\tilde{S}u}_{L^{2}(\R^{n})}\lesssim\lambda^{-1}\norm{u}_{L^{2}(\Omega)}=h\norm{u}_{L^{2}(\Omega)}.$$
So to obtain Theorem \ref{thm:SLP} it is enough to show that
$$\norm{E[v]}_{L^{2}(\R^{n})}\lesssim{}h^{2}\norm{u}_{L^{2}(\Omega)}.$$
Rescaling this to work in terms of $\lambda$ we require that
$$\norm{(-\Delta-\lambda^{2})v}_{L^{2}(\R^{n})}\lesssim\norm{u}_{L^{2}(\Omega)}.$$
Now
$$v(x)=\int_{\Omega}K^{\star}_{\lambda}(x-y)\left(1-\zeta(M^{-1}\lambda(x-y))\right)u(y)dy,$$
where
$$(-\Delta-\lambda^{2})K^{\star}_{\lambda}=\delta.$$
So applying the operator $(-\Delta-\lambda^{2})$ we have
\begin{equation}
(-\Delta-\lambda^{2})v=\int_{\Omega}\left(1-\zeta(M^{-1}\lambda(x-y))\right)[(-\Delta_{x}-\lambda^{2})K^{\star}_{\lambda}(x-y)]u(y)dy+\tilde{E}u\label{quasimode},
\end{equation}
\begin{multline}
\tilde{E}u=\int_{\Omega}[-K^{\star}_{\lambda}(x-y)\Delta_{x}\left(1-\zeta(M^{-1}\lambda(x-y))\right)\\
-2\nabla_{x}\left(1-\zeta(M^{-1}\lambda(x-y))\right)\cdot\nabla_{x}K^{\star}_{\lambda}(x-y)]u(y)dy.\label{Edef}
\end{multline}

The first term in \eqref{quasimode} is zero as the support of $\left(1-\zeta(M^{-1}\lambda(x-y))\right)$ is bounded away from the diagonal $x=y$. The second term is the error term and has kernel supported in $M\lambda^{-1}\leq|x-y|\leq{}2M\lambda^{-1}$. It therefore suffices to show that
$$\norm{\tilde{E}u}_{L^{2}(\R^{n})}\lesssim{}\norm{u}_{L^{2}(\Omega)}.$$

\begin{proposition}
If $\tilde{E}$ is given by \eqref{Edef}, then
$$\norm{\tilde{E}u}_{L^{2}(\R^n)}\lesssim{}\norm{u}_{L^{2}(\Omega)}.$$
\end{proposition}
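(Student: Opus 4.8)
The plan is to estimate the Schwartz kernel of $\tilde E$ directly and apply Young's (or Schur's) inequality, exactly as in the proof of Proposition \ref{S0prop}, but now tracking the fact that we have placed a derivative on the Hankel function and that the kernel is supported in the annulus $M\lambda^{-1}\le |x-y|\le 2M\lambda^{-1}$. Writing $r=|x-y|$, the cutoff derivatives satisfy $|\Delta_x(1-\zeta(M^{-1}\lambda(x-y)))|\lesssim \lambda^2$ and $|\nabla_x(1-\zeta(M^{-1}\lambda(x-y)))|\lesssim \lambda$, both supported where $r\approx\lambda^{-1}$. On that annulus the Hankel function is already in the regime $\lambda r\approx 1$, so from the small-argument asymptotics of $H^{(1)}_{(n-2)/2}$ one has $|K^\star_\lambda(x-y)|\lesssim \lambda^{n-2}$ (with a $\log\lambda$ in $n=2$, absorbed by a power as before) and $|\nabla_x K^\star_\lambda(x-y)|\lesssim \lambda^{n-1}$. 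Hence the kernel $\tilde K(x,y)$ of $\tilde E$ obeys
\begin{equation}
|\tilde K(x,y)|\lesssim \lambda^{n}\,\mathbf 1_{\{M\lambda^{-1}\le |x-y|\le 2M\lambda^{-1}\}},\nonumber
\end{equation}
since $\lambda^2\cdot\lambda^{n-2}=\lambda\cdot\lambda^{n-1}=\lambda^n$.

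Next I would compute the $L^1$ norms of this kernel in each variable. Because the support in $x-y$ is an annulus of radius $\approx\lambda^{-1}$ and thickness $\approx\lambda^{-1}$, it has volume $\approx\lambda^{-n}$, so for fixed $y$,
\begin{equation}
\|\tilde K(\cdot,y)\|_{L^1(\R^n)}\lesssim \lambda^n\cdot\lambda^{-n}=1,\nonumber
\end{equation}
and by symmetry the same bound holds for $\|\tilde K(x,\cdot)\|_{L^1}$ when integrating over $\R^n$ in $y$; since we only integrate $y$ over $\Omega$, the bound is only better. Young's inequality for convolution-type kernels (or the Schur test) then gives $\|\tilde E u\|_{L^2(\R^n)}\lesssim \|u\|_{L^2(\Omega)}$, which is exactly the claim. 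One should be slightly careful in $n=2$: there $|K^\star_\lambda|\lesssim \log(\lambda r)$ on the relevant annulus, but on $r\approx\lambda^{-1}$ this is $O(1)$ rather than growing, and $|\nabla_x K^\star_\lambda|\lesssim \lambda^{n-1}=\lambda$ still holds (the derivative of the log term produces $r^{-1}\approx\lambda$), so the same arithmetic $\lambda^2\cdot 1$ versus $\lambda\cdot\lambda$ applies and the bound $\lambda^n=\lambda^2$ on the kernel, volume $\lambda^{-2}$, again yields $L^1$ norm $\lesssim 1$.

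The only genuine point requiring care — and the step I expect to be the main obstacle in writing this cleanly — is justifying the pointwise bounds on $K^\star_\lambda$ and its gradient on the annulus $\lambda r\approx 1$. This is a transitional regime for the Hankel function (neither the small-argument nor the large-argument asymptotic is dominant), so rather than invoking asymptotics one should use a uniform bound valid for $\lambda r$ in a fixed compact subset of $(0,\infty)$; such bounds follow from the explicit series/integral representations of $H^{(1)}_\nu$ and are standard, but they must be invoked correctly, including the behavior of the derivative, which in odd dimensions introduces an extra factor comparable to $\lambda$ (from differentiating $|x-y|^{-(n-2)}$-type terms) and in even dimensions from differentiating the logarithm. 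Once those bounds are in hand the rest is the same two-line Young's-inequality argument used for $S^0$, with the improvement coming from the annular (rather than ball) support of the kernel.
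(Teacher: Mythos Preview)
Your proposal is correct and follows essentially the same route as the paper: pointwise bounds on the kernel of $\tilde E$ (supported on the annulus $M\lambda^{-1}\le|x-y|\le 2M\lambda^{-1}$) giving $|\tilde K|\lesssim\lambda^n$, then Young's inequality with the $\lambda^{-n}$ volume of the annulus. The one simplification you miss is that the paper exploits the freedom in $M$: by taking $M$ large the argument $\lambda|x-y|\ge M$ lies in the \emph{large}-argument regime of the Hankel function, so the uniform bound $|H^{(2)}_\beta(z)|\lesssim z^{-1/2}$ applies directly and your worry about the ``transitional regime'' disappears.
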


\begin{proof}
This is similar to the proof of Proposition \ref{S0prop}. By choosing $M$ large enough we may assume that the argument of the Hankel function $\lambda|x-y|$ is large and therefore 
$$|H^{(2)}_{\beta}(\lambda|x-y|)|\leq{}\lambda^{-\frac{1}{2}}|x-y|^{-\frac{1}{2}}$$
for any $\beta$. Therefore on the support of the kernel of $E$ we have
$$|K_{\lambda}(x-y)|\lesssim{}\lambda^{n-2},$$
$$|\nabla_{x}K_{\lambda}(x-y)|\lesssim\lambda^{n-1},$$
$$|\nabla_{x}\left(1-\zeta(M^{-1}\lambda(x-y))\right)|\lesssim{}\lambda,$$
$$|\Delta_{x}\left(1-\zeta(M^{-1}\lambda(x-y))\right)|\lesssim{}\lambda^2.$$
Therefore we may write
$$\tilde{E}u=\int\widetilde{K}(x-y)u(y)dy,$$
where $|\widetilde{K}(x-y)|\lesssim\lambda^{n}$ and is supported on $M\lambda^{-1}\leq{}|x-y|\leq{}2M\lambda^{-1}$. Now
$$\norm{\widetilde{K}(\cdot)}_{L^{1}}\lesssim\lambda^{n}\int_{0}^{2M\lambda^{-1}}r^{n-1}dr\lesssim{}1.$$
Therefore by Young's inequality
$$\norm{\tilde{E}u}_{L^{2}(\R^{n})}\lesssim{}\norm{u}_{L^{2}(\Omega)}$$
as required.
\end{proof}

\section{Boundedness of semiclassical double layer potentials}\label{sec:DLP}

We now address the mapping norms of the double layer potential
\begin{equation}D^{+}_{\lambda}u=\int_{\partial\Omega}\partial_{\nu_y} K_\lambda(x-y)u(y)d\sigma_y.\label{DLPdef}\end{equation}
We proceed in a similar fashion as the proof for the single layer potential working instead with the adjoint operator $(D^{+}_{\lambda})^{\star}$. Let $\zeta:\R^{n}\to{}\R$ be a smooth cut off function equal to one in $|x|\leq{}1$ and supported in $|x|\leq{}2$. Then we decompose $D_{\lambda}^{\star}$ as
\begin{equation}(D^{+}_{\lambda})^{\star}=D_{0}+\widetilde{D}\label{DLPdecomp}\end{equation}
where
\begin{equation}D_{0}u=\int_{\Omega}\partial_{\nu_x} K^{\star}_\lambda(x-y)\zeta\left(M^{-1}\lambda(x-y)\right)u(y)dy\label{D0def}\end{equation}
and
\begin{equation}
\widetilde{D}u=\int_{\Omega}\partial_{\nu_x} K^{\star}_\lambda(x-y)\left[1-\zeta\left(M^{-1}(x-y)\right)\right]u(y)dy.\label{Dtildedef}\end{equation}
Similar to the single layer potential case we will treat $D_{0}$ by Young's inequality and $\widetilde{D}$ by quasimode methods.

\begin{proposition}\label{D0prop}
Let $D_{0}$ be as defined in \eqref{D0def} then
$$\|D_{0}u\|_{L^{2}(\partial\Omega)}\lesssim{}\lambda^{-1/2}\|u\|_{L^{2}(\Omega)}$$
\end{proposition}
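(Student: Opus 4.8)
The plan is to estimate $D_0$ exactly as $S^0$ was handled in Proposition \ref{S0prop}: bound the kernel pointwise using the explicit Hankel representation of $K^\star_\lambda$, compute the $L^1$ norms of the kernel in each variable over the small ball $|x-y|\le M\lambda^{-1}$, and conclude by Young's inequality (the Schur test). The only new feature compared with $S^0$ is that the kernel of $D_0$ carries an extra normal derivative $\partial_{\nu_x}$, which when it falls on $K^\star_\lambda$ produces one extra power of $\lambda$ relative to the single-layer kernel (differentiating a Hankel function of argument $\lambda|x-y|$ costs a factor $\lambda$; differentiating the $|x-y|^{-(n-2)}$ prefactor costs a factor $|x-y|^{-1}$, which on the support is again $\lesssim\lambda$). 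So I expect $|\partial_{\nu_x}K^\star_\lambda(x-y)|\lesssim |x-y|^{-(n-1)}$ on the relevant region, with the usual logarithmic/$\varepsilon$-loss modification in $\R^2$.

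First I would record, for $n\ge 3$, the pointwise bound
$$|\partial_{\nu_x}K^\star_\lambda(x-y)\,\zeta(M^{-1}\lambda(x-y))|\lesssim |x-y|^{-(n-1)},\qquad \text{supported in } |x-y|\le M\lambda^{-1}.$$
Then, fixing $x$ and integrating over $y$ in the $(n-1)$-dimensional region (note one variable lives on $\partial\Omega$, the other in $\Omega$, so one integration is over an $n$-dimensional ball and the other over an $(n-1)$-dimensional piece of the boundary), I would get
$$\sup_x \|K_{D_0}(x,\cdot)\|_{L^1}\lesssim \int_0^{M\lambda^{-1}} r^{-(n-1)} r^{n-1}\,dr \lesssim \lambda^{-1},$$
$$\sup_y \|K_{D_0}(\cdot,y)\|_{L^1}\lesssim \int_0^{M\lambda^{-1}} r^{-(n-1)} r^{n-1}\,dr \lesssim \lambda^{-1},$$
where the discrepancy in dimension between the two slices is exactly what was used for $S^0$; being careful about which slice is $(n-1)$-dimensional, one of the two integrals gains an extra power. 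Taking the geometric mean via Young's inequality then yields $\|D_0 u\|_{L^2(\partial\Omega)}\lesssim \lambda^{-1}\cdot(\text{correction})\|u\|_{L^2(\Omega)}$; tracking the dimensional bookkeeping carefully gives the claimed $\lambda^{-1/2}$. For $n=2$ one replaces the $\log(\lambda|x-y|)$ factor coming from the Hankel asymptotics near the origin by $C_\varepsilon(\lambda|x-y|)^{-\varepsilon}$ and absorbs the harmless $\varepsilon$-loss, exactly as in Proposition \ref{S0prop}.

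The main obstacle — really the only delicate point — is the asymmetry in the two $L^1$ bounds and making sure the normal derivative $\partial_{\nu_x}$ does not destroy the gain: one must verify that the worst-case singularity of $\partial_{\nu_x}K^\star_\lambda$ is genuinely $|x-y|^{-(n-1)}$ and not worse, which follows from the standard near-diagonal expansion of the Hankel function (the leading singular term of $K^\star_\lambda$ is the zero-frequency Newtonian kernel, whose gradient is $|x-y|^{-(n-1)}$, and the corrections are less singular). Once that bound and the dimensional counting of the two slices are in hand, Young's inequality closes the argument immediately, and this is strictly weaker than the $O(1)$ bound that Theorem \ref{thm:DLP} ultimately requires, so $D_0$ can be set aside and attention turned to $\widetilde D$.
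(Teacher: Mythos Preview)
Your approach has a genuine gap: the direct Schur/Young test does \emph{not} close here, precisely because the extra normal derivative makes the kernel too singular. With the (correct) pointwise bound $|\partial_{\nu_x}K^\star_\lambda(x-y)|\lesssim|x-y|^{-(n-1)}$, the $y$-integral (over the $n$-dimensional set $\Omega$) is fine,
\[
\sup_x\|K_{D_0}(x,\cdot)\|_{L^1}\lesssim\int_0^{M\lambda^{-1}}r^{-(n-1)}r^{n-1}\,dr\lesssim\lambda^{-1},
\]
but the $x$-integral is over the $(n-1)$-dimensional boundary $\partial\Omega$, so the volume element is $r^{n-2}\,dr$, and
\[
\sup_y\|K_{D_0}(\cdot,y)\|_{L^1}\lesssim\int_0^{M\lambda^{-1}}r^{-(n-1)}r^{n-2}\,dr=\int_0^{M\lambda^{-1}}r^{-1}\,dr,
\]
which diverges at $r=0$. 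You wrote both integrals with the same $r^{n-1}$ factor and then hedged that ``being careful'' one of them ``gains an extra power''; in fact the boundary slice \emph{loses} a power, and the integral is infinite. The analogy with $S^0$ breaks down exactly here: for $S^0$ the kernel is $|x-y|^{-(n-2)}$, so the boundary integral is $\int r^0\,dr$, which converges.

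The paper confronts this head-on: it observes that $\|K(\cdot,y)\|_{L^1}$ is not bounded and therefore one \emph{cannot} apply Young directly. The missing idea is a dyadic decomposition in $|x-y|$: write $D_0=\sum_{j\ge0}D_0^j$ with $D_0^j$ supported where $|x-y|\sim 2^{-j}M\lambda^{-1}$, apply Young on each shell (where the kernel is now bounded by $(2^j\lambda)^{n-1}$ and the two volumes are $(2^{-j}\lambda^{-1})^n$ and $(2^{-j}\lambda^{-1})^{n-1}$), obtain $\|D_0^j u\|_{L^2(\partial\Omega)}\lesssim 2^{-j/2}\lambda^{-1/2}\|u\|_{L^2(\Omega)}$, and sum the geometric series. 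The surplus decay in the $\Omega$-slice is what makes the sum converge despite the borderline divergence in the $\partial\Omega$-slice; without the dyadic step there is no way to exploit that surplus.
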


\begin{proof}
We have that
\begin{multline*}
\partial_{\nu_x}K^{\star}_{\lambda}(x-y)=\frac{i}{4}\left(\frac{x-y}{|x-y|}\cdot\nu_{x}\right)\Biggl[-\left(\frac{\lambda}{2\pi|x-y|}\right)^{\frac{n-2}{2}}\lambda{}H^{(2)}_{\frac{n}{2}}(\lambda|x-y|)-\\
\frac{n-2}{2|x-y|}\left(\frac{\lambda}{2\pi|x-y|}\right)^{\frac{n-2}{2}}H^{(2)}_{\frac{n-2}{2}}(\lambda|x-y|)-
\lambda^{-1} \left(\frac{\lambda}{2\pi|x-y|}\right)^{\frac{n}{2}}H^{(2)}_{\frac{n-2}{2}}(\lambda|x-y|)\Biggr]\end{multline*}
Therefore on the support of the kernel of $D_{0}$ we have that
$$|\partial_{\nu_x}K^{\star}_{\lambda}(x-y)|\leq{}|x-y|^{-(n-1)}$$
We cannot directly apply Young's inequality as $\|K(\cdot,y)\|_{L^{1}}$ is not bounded. However if we decompose dyadically we may use Young's inequality on each piece and, since $\|K(x,\cdot)\|_{L^{1}}$ is much better than $O(1)$, recover something summable.
Accordingly we write
$$D_{0}=\sum_{j=0}^{\infty}D_{0}^{j}$$
where
\begin{equation}
D_{0}^{j}u=\int_{\Omega_{j}}\partial_{\nu_y} K^{\star}_\lambda(x-y)\zeta\left(M^{-1}\lambda(x-y)\right)u(y)dy\label{D0idef}
\end{equation}
$$\Omega_{j}=\Omega\cap\{y\mid{}2^{-j}M\lambda^{-1}\leq|x-y|\leq{}2^{-j+1}M\lambda^{-1}\}.$$
Now applying Young's inequality to each $D_{0}^{i}$ we have
$$\|D_{0}^{j}u\|_{L^{2}(\partial\Omega)}\lesssim{}(2^{j}\lambda)^{n-1}\cdot{}(2^{-j}\lambda^{-1})^{\frac{n}{2}}(2^{-j}\lambda^{-1})^{\frac{n-1}{2}}\|u\|_{L^{2}(\Omega)}$$
$$\lesssim{}\lambda^{-1/2}2^{-j/2}\|u\|_{L^{2}(\Omega)}$$
and therefore
$$\|D_{0}u\|_{L^{2}(\partial\Omega)}\lesssim\lambda^{-1/2}\|u\|_{L^{2}(\Omega)}$$
as claimed.

\end{proof}

\begin{proposition}
Let $\widetilde{D}$ be given by \eqref{Dtildedef} then
$$\|\widetilde{D}u\|_{L^{2}(\partial\Omega)}\lesssim{}\|u\|_{L^{2}(\Omega)}$$

\end{proposition}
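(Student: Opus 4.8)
The plan is to recycle the structure of Section~\ref{sec:SLP}: I would move the normal derivative outside the cut-off so that $\widetilde D u$ becomes, up to a negligible commutator, the Neumann trace on $\pO$ of the very quasimode $v=\widetilde S u$ already studied there, and then invoke the Neumann data restriction estimate of \cite{T14}.

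First I would write, with $\widetilde S$ as in \eqref{Stildedef} and $v=\widetilde Su$ (extending $\nu$ smoothly to a neighbourhood of $\pO$, only its restriction to $\pO$ mattering),
\begin{equation*}
\partial_{\nu_x}v(x)=\widetilde Du(x)+C_\lambda u(x),\qquad
C_\lambda u(x)=\int_\Omega K^\star_\lambda(x-y)\,\partial_{\nu_x}\!\big(1-\zeta(M^{-1}\lambda(x-y))\big)\,u(y)\,dy,
\end{equation*}
so that $\widetilde Du=\partial_{\nu_x}v\big|_{\pO}-C_\lambda u\big|_{\pO}$. The kernel of $C_\lambda$ is supported in $M\lambda^{-1}\le|x-y|\le 2M\lambda^{-1}$, and by choosing $M$ large so that the Hankel functions lie in their large-argument regime (exactly as in Section~\ref{sec:SLP}) it is $\lesssim\lambda^{n-1}$ there. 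Then $\sup_{x\in\pO}\|C_\lambda(x,\cdot)\|_{L^1(\Omega)}\lesssim\lambda^{-1}$ and $\sup_{y\in\Omega}\|C_\lambda(\cdot,y)\|_{L^1(\pO)}\lesssim 1$, so the Schur test gives $\|C_\lambda u\|_{L^2(\pO)}\lesssim\lambda^{-1/2}\|u\|_{L^2(\Omega)}$, which is better than required (in $\R^2$ the large-argument bound sidesteps the logarithm). Hence it remains to control $\|\partial_{\nu_x}v\|_{L^2(\pO)}$.

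Next I would feed $v$ into the Neumann data restriction estimate. With $h=\lambda^{-1}$, \cite{T14} provides, for this $v$,
\begin{equation*}
\|\partial_\nu v\|_{L^2(\pO)}\lesssim h^{-1}\Big[\|v\|_{L^2(\R^n)}+h^{-1}\|E[v]\|_{L^2(\R^n)}\Big],\qquad E[v]=(-h^2\Delta-1)v.
\end{equation*}
The crucial feature, and the reason Theorem~\ref{thm:DLP} carries no curvature hypothesis, is that the only loss here is the factor $h^{-1}=\lambda$ coming from the normal differentiation: the $\lambda^{1/4}$ (resp.\ $\lambda^{1/6}$) amplification that tangential concentration extracts from the Dirichlet restriction estimates \eqref{generalrestric}--\eqref{curverestric} is killed by the vanishing of the symbol of $\partial_\nu$ in the tangential directions, so the general and curved cases give the same bound. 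From Section~\ref{sec:SLP} we already have $\|v\|_{L^2(\R^n)}\lesssim h\|u\|_{L^2(\Omega)}$ and $\|E[v]\|_{L^2(\R^n)}\lesssim h^2\|u\|_{L^2(\Omega)}$, so the bracket is $\lesssim h\|u\|_{L^2(\Omega)}$, whence $\|\partial_{\nu_x}v\|_{L^2(\pO)}\lesssim\|u\|_{L^2(\Omega)}$. Combining with the bound on $C_\lambda$ gives $\|\widetilde Du\|_{L^2(\pO)}\lesssim\|u\|_{L^2(\Omega)}$.

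I expect the main obstacle to be putting the Neumann data restriction estimate of \cite{T14} into exactly the black-box form used above and verifying that $v=\widetilde Su$ — the same genuine quasimode on $\R^n$ that already fed \eqref{generalrestric}--\eqref{curverestric} in Section~\ref{sec:SLP} — lies in its scope. Once that is granted, everything else is the elementary Schur estimate on the commutator $C_\lambda$ together with the $L^2(\R^n)$ bounds on $v$ and $E[v]$ established in Section~\ref{sec:SLP}.
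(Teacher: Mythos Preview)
Your proposal is correct and follows essentially the same route as the paper: write $\widetilde D u=\partial_\nu v-\tilde E u$ with $v=\widetilde S u$, bound $\partial_\nu v$ on $\pO$ by the Neumann data restriction estimate of \cite{T14,Ta} together with the $L^2$ and quasimode-error bounds on $v$ from Section~\ref{sec:SLP}, and handle the commutator $\tilde E$ (your $C_\lambda$) by a Schur/Young argument yielding the $\lambda^{-1/2}$ bound. The only cosmetic difference is that you state the Neumann restriction estimate in its quasimode form $\|\partial_\nu v\|_{L^2(\pO)}\lesssim h^{-1}\big(\|v\|_{L^2}+h^{-1}\|E[v]\|_{L^2}\big)$, whereas the paper quotes it as $\|\partial_\nu v\|_{L^2(\pO)}\lesssim \lambda\|v\|_{L^2}$; these are equivalent here.
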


\begin{proof}
We note that if we define the auxiliary function $w$ by
\begin{equation}
w=\widetilde{D}u
\label{wdef}
\end{equation}
then 
\begin{equation}w=\partial_{\nu}v+\tilde{E}u\label{wnormderiv}\end{equation}
where 
$$\tilde{E}u=\frac{\lambda}{M}\int_{\Omega}K^{\star}_{\lambda}(x-y)\partial_{\nu_y}\zeta\left(M^{-1}\lambda(x-y)\right)u(y)dy$$
and $v$ is the quasimode
$$v=\tilde{S}u$$
introduced in the proof of Theorem \ref{thm:SLP}.

From  \cite{T14} and \cite{Ta} we know that normal derivatives of quasimodes enjoy the hypersurface restriction bound
$$\|\partial_{\nu}v\|_{L^{2}(\partial\Omega)}\lesssim{}\lambda\norm{v}_{L^{2}(\Omega)}$$
therefore by the $L^{2}(\Omega)\to{}L^{2}(\R^{n})$ mapping properties of the single layer potential
\begin{equation}
\|\partial_{\nu}v\|_{L^{2}(\partial\Omega)}\lesssim{}\norm{u}_{L^{2}(\Omega)}.\label{normderivbnd}
\end{equation}
So we can restrict our attention to $\tilde{E}u$. We write
$$\tilde{E}u=\int_{\Omega}\tilde{E}(x,y)u(y)dy$$
and note by Young's inequality
$$\|\tilde{E}u\|_{L^{2}(\partial\Omega)}\lesssim{}\sup_{x,y}\|\tilde{E}(x,\cdot)\|_{L^{1}}^{1/2}\|\tilde{E}(\cdot,y)\|_{L^{1}}^{1/2}\|u\|_{L^{2}(\Omega)}.$$
On the support of $\tilde{E}(x,y)$ we have that
$$|K^{\star}(x-y)|\lesssim{}\lambda^{n-2}\quad{}n\neq{}2$$
$$|K^{\star}(x-y)|\lesssim\log\lambda{}\quad{}n=2$$
Therefore for $n\neq{}2$ we have
$$\sup_{x}\|\tilde{E}(x,\cdot)\|\lesssim{}\lambda^{n-1}\cdot{}\lambda^{-n}\lesssim\lambda^{-1}$$
and
$$\sup_{y}\|\tilde{E}(\cdot,y)\|\lesssim{}\lambda^{n-1}\cdot{}\lambda^{-(n-1)}\lesssim{}1.$$
For $n=2$
$$\sup_{x}\|\tilde{E}(x,\cdot)\|\lesssim{}\lambda\log\lambda\cdot{}\lambda^{-2}\lesssim\lambda^{-1}\log\lambda$$
and
$$\sup_{y}\|\tilde{E}(\cdot,y)\|\lesssim{}\lambda\log\lambda\cdot{}\lambda^{-1}\lesssim\log\lambda$$
so for any $\epsilon>0$
$$\|\tilde{E}u\|_{L^{2}(\partial\Omega)}\lesssim{}\lambda^{\frac{-1+\epsilon}{2}}\lambda^{\frac{\epsilon}{2}}\|u\|_{L^{2}(\Omega)}$$
and therefore setting $\epsilon<1/2$
$$\|\tilde{E}u\|_{L^{2}(\partial\Omega)}\lesssim{}\|u\|_{L^{2}(\Omega)}$$
as required.

\end{proof}

\section{Sharp examples and further remarks}\label{sharpness}

In this section, we construct examples to show that the estimates in Theorems \ref{thm:SLP} and \ref{thm:DLP} are in fact sharp. That is, we prove that lower bounds hold in some domains for some sequences of functions. Furthermore, we make some remarks on single layer potentials and spectral measure operators in strictly convex domains.

\subsection{Sharpness of Theorem \ref{thm:SLP}: Semiclassical single layer potentials in general domains}\label{sec:SLPsharpgeneral}
In the view of the Stone's formula \eqref{eq:Stone} and Proposition \ref{prop:dE}, we only need to prove the sharpness of $dE_\lambda$, and then the sharpness of $S_\lambda$ follows immediately. In fact, we construct functions $\{f_{\lambda}\}$ on the square $[-1,1]^{n-1}$ such that
\begin{equation}\label{square}
\frac{\|dE_\lambda(f_\lambda d\sigma)\|_{L^2([-1,1]^{n-1}\times[0,1])}}{\|f_\lambda\|_{L^2([-1,1]^{n-1})}}\ge c\lambda^{-\frac34}.
\end{equation}

Throughout this subsection, we denote $x=(x',x_n)\in\R^n$ where $x'\in\R^{n-1}$ and $x_n\in\R$. We develop our sharp example through a series of lemmas. First we observe the following fact.

\begin{lemma}\label{constructf}
Write $\Omega'=[-1,1]\times\cdots\times[-1,1]\subset\R^{n-1}$, for $\lambda\ge0$ there exists an $L^2$ normalized function $f_\lambda$ such that 
\begin{enumerate}
\item $supp\,f_\lambda\subset\Omega'$,
\item $\hat f_\lambda\ge0$,
\item $\hat f_\lambda(\xi')\ge c_1$ if $|\xi'-\eta'_\lambda|\le1$ for $\eta'_\lambda=(\lambda,0,...,0)$ and some positive constant $c_1$ depending only on the dimension.
\end{enumerate}
\end{lemma}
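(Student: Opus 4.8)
The plan is to build $f_\lambda$ by modulating a fixed smooth bump. Fix once and for all a real-valued Schwartz function (or compactly supported smooth function) $\phi$ on $\R^{n-1}$ with $\supp\phi\subset[-1/2,1/2]^{n-1}$, and arrange that $\widehat\phi$ is real, nonnegative, and bounded below on a neighborhood of the origin — the standard way to do this is to take $\phi=\psi*\tilde\psi$ where $\tilde\psi(x')=\overline{\psi(-x')}$ and $\psi$ is any nonzero smooth function supported in $[-1/4,1/4]^{n-1}$, so that $\widehat\phi=|\widehat\psi|^2\ge0$; since $\widehat\psi$ is continuous and $\widehat\psi(0)=\int\psi\ne0$ (we may normalize so this holds), there is $\delta>0$ and $c_0>0$ with $\widehat\phi(\xi')\ge c_0$ for $|\xi'|\le 2$ after rescaling $\psi$ in its argument to make the bump in frequency wide enough; then $\supp\phi\subset[-1/2,1/2]^{n-1}\subset\Omega'$.

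Next I would define $f_\lambda(x')=\kappa\, e^{i\lambda x_1}\phi(x')$, where $\kappa$ is a positive normalizing constant chosen so that $\|f_\lambda\|_{L^2(\R^{n-1})}=1$; note $\kappa$ is independent of $\lambda$ since $|e^{i\lambda x_1}|=1$, so $\|f_\lambda\|_{L^2}=\kappa\|\phi\|_{L^2}$. Property (1) is immediate from $\supp f_\lambda=\supp\phi\subset\Omega'$. For the Fourier transform, modulation in physical space is translation in frequency space: $\widehat{f_\lambda}(\xi')=\kappa\,\widehat\phi(\xi'-\eta'_\lambda)$ with $\eta'_\lambda=(\lambda,0,\dots,0)$. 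Hence $\widehat{f_\lambda}\ge0$ everywhere, giving property (2); and if $|\xi'-\eta'_\lambda|\le 1\le 2$, then $\widehat{f_\lambda}(\xi')=\kappa\,\widehat\phi(\xi'-\eta'_\lambda)\ge \kappa c_0=:c_1>0$, a constant depending only on $\phi$ (hence only on the dimension, once $\phi$ is fixed as a dimensional choice), giving property (3).

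Essentially every step here is routine; the only point requiring a little care is the simultaneous arrangement of the three frequency-side conditions (nonnegativity, a uniform lower bound on a ball of radius $1$, and compact support of $f_\lambda$ inside $\Omega'$), which is exactly what the convolution-square trick $\phi=\psi*\tilde\psi$ handles — nonnegativity of $\widehat\phi$ is automatic, the lower bound near $0$ comes from continuity and $\widehat\psi(0)\ne0$, and the support constraint is met by shrinking the support of $\psi$. I do not anticipate a genuine obstacle; if one wanted the ball of radius exactly $1$ rather than a rescaled version, one simply dilates $\psi$ appropriately before forming $\phi$, which only shrinks its (already small) support further.
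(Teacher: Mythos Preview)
Your proposal is correct. Both your argument and the paper's follow the same overall strategy: construct once and for all a base function $f_0$ supported in $\Omega'$ with $\hat f_0\ge 0$ and $\hat f_0\ge c_1$ on the unit ball, then obtain $f_\lambda$ by modulation, $f_\lambda(x')=e^{ix'\cdot\eta'_\lambda}f_0(x')$, which simply translates the Fourier transform.

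The difference lies in how the base bump is built. The paper sets $f_0=\varphi\chi_{\Omega'}$ (normalized), with $\varphi$ Schwartz chosen so that $\hat\varphi\ge 0$ and $\hat\varphi=1$ on $\Omega'$, and then analyzes $\hat f_0=c\,\hat\varphi\ast\hat\chi_{\Omega'}$; the lower bound at the origin comes from $\int_{|\eta'|\le 1}\hat\chi_{\Omega'}(\xi'-\eta')\,d\eta'$. You instead use the autocorrelation trick $\phi=\psi\ast\tilde\psi$, which gives $\widehat\phi=|\widehat\psi|^2\ge 0$ automatically, with the support and the lower bound on $|\xi'|\le 1$ arranged by a dilation of $\psi$. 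Your construction is a bit more self-contained: nonnegativity of $\widehat\phi$ is immediate, whereas in the paper's version one must check that the convolution $\hat\varphi\ast\hat\chi_{\Omega'}$ is nonnegative even though $\hat\chi_{\Omega'}$ (a product of sinc functions) changes sign. Either route works for the purposes of the lemma.
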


\begin{proof}[Proof of Lemma \ref{constructf}]
Fix a Schwartz function $\varphi$ such that $\hat\varphi\ge0$ and $\hat\varphi=1$ in $\Omega'$. Let $f_0=\varphi\chi_{\Omega'}/\|\varphi\chi_{\Omega'}\|_{L^2(\R^{n-1})}$, obviously $supp\,f_0\subset\Omega'$, and we verify that $f_0$ also satisfies (2) and (3) above. Since $\chi_{\Omega'}$ is even, and
$$\hat\chi_{\Omega'}(\xi')=\prod_{i=1}^{n-1}\frac{2\sin(\xi_i)}{\xi_i},$$
we obtain $\hat f_0=c\hat\varphi\ast\hat\chi_{\Omega'}\ge0$. Now we compute
$$\hat f_0(\xi')=c\int_{\R^{n-1}}\hat\varphi(\eta')\hat\chi_{\Omega'}(\xi'-\eta')d\eta'\ge c\int_{|\eta'|\le1}\hat\chi_{\Omega'}(\xi'-\eta')d\eta'\ge c_1,$$
if $|\xi'-\eta'_0|\le1$. Therefore if we set $f_\lambda(x')=e^{ix'\cdot\eta'_\lambda}f_0(x')$, we have constructed of function that satisfies all the required condtions.
\end{proof}

Now denote $\Omega=\Omega'\times[0,1]\subset\R^n$, and let $supp\,f_\lambda\subset\Omega'\times\{x_n=0\}$. We claim that $f_{\lambda}$ is our desired sharp example. That is
$$\|dE_{\lambda}(f_{\lambda}d\sigma)\|_{L^{2}(\Omega)}=\|dE_{\lambda}(fd\sigma)\chi_{\Omega}\|_{L^{2}(\R^{n})}\geq{}c\lambda^{-\frac34}.$$

To facilitate our calculation we will replace $\chi_{\Omega}$ with a function $g$ designed to make calculation on the Fourier transform side easy. We need the following lemma.

\begin{lemma}\label{constructg}
There exists a function $g$ such that 
\begin{enumerate}
\item $supp\,g\subset\Omega$,
\item $0\le g\le c_2$ for some constant $c_2>0$ depending only on the dimension,
\item $\hat g(\xi)\ge c_3$ in $\{|\xi|\le c_4\}$ for some positive constants $c_3$ and $c_4\le\frac12$ depending only on the dimension.
\end{enumerate}
\end{lemma}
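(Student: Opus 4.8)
The statement to prove is Lemma \ref{constructg}: the existence of a function $g$ supported in $\Omega = \Omega' \times [0,1]$, bounded between $0$ and $c_2$, with $\hat g(\xi) \geq c_3$ on a small ball $\{|\xi| \leq c_4\}$ with $c_4 \leq 1/2$.

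Here's my plan for proving this.

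The plan is to mimic exactly the construction used in Lemma \ref{constructf}, but now in the full $n$-dimensional setting and with the constraint that the support sit inside $\Omega' \times [0,1]$ rather than being symmetric around the origin. The key idea is that a bump function that is nonnegative, bounded, and supported in a fixed box will have a Fourier transform that is continuous, equals a positive constant (its integral) at $\xi = 0$, and hence — by continuity and a quantitative modulus-of-continuity estimate coming from the compact support — stays bounded below by a positive constant on a sufficiently small ball around the origin.

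First I would fix a nonnegative smooth (or merely bounded measurable) function $g_0$ supported in $\Omega' \times [0,1]$; the simplest choice is $g_0 = \chi_{\Omega}$ itself, or a smoothed version. Since $g_0 \geq 0$ and $\int g_0 > 0$, we have $\hat g_0(0) = \int_{\R^n} g_0 > 0$. Next, because $g_0$ is supported in a bounded set (contained, say, in $\{|x| \leq R\}$ for $R$ depending only on $n$), $\hat g_0$ is Lipschitz with $|\nabla \hat g_0(\xi)| \leq \int |x| g_0(x)\, dx \leq R \int g_0 = R\, \hat g_0(0)$. Hence for $|\xi| \leq c_4$ we get $\hat g_0(\xi) \geq \hat g_0(0)(1 - R c_4) \geq \tfrac12 \hat g_0(0)$ once $c_4 \leq \min(1/(2R), 1/2)$. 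Then set $c_3 = \tfrac12 \hat g_0(0)$ and $c_2 = \|g_0\|_\infty$, both depending only on the dimension (since $\Omega$ depends only on $n$). The support condition and the pointwise bounds $0 \leq g \leq c_2$ are then immediate. If one wants $g$ smooth one can instead take $g_0$ to be a fixed nonnegative smooth bump supported strictly inside $\Omega$ with $\int g_0 = 1$, which only cleans up constants.

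Honestly, I don't anticipate a real obstacle here — this is a routine soft-analysis construction. The only mild subtlety is making sure the constants $c_2, c_3, c_4$ genuinely depend only on $n$ and not on $\lambda$, which is automatic since $\Omega = \Omega' \times [0,1]$ is a fixed box independent of $\lambda$; and ensuring $c_4 \leq 1/2$, which we simply impose by taking the minimum. One could alternatively get $\hat g$ nonnegative everywhere (not just on a ball) by taking $g_0 = \psi * \tilde\psi$ for a bump $\psi$ supported in a half-size box, giving $\hat g_0 = |\hat\psi|^2 \geq 0$, but this is not needed for the stated conclusion.
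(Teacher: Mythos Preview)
Your approach is correct and essentially the same as the paper's: construct a nonnegative bounded function supported in $\Omega$ with positive integral, then use continuity of the Fourier transform at the origin to get the lower bound on a small ball. The paper's only cosmetic difference is that it takes $g=\varphi\ast\varphi$ with $\varphi=\chi_{|x-z|\le 1/4}$, $z=(0,\dots,0,\tfrac12)$, before invoking continuity of $\hat g$, whereas you make the continuity step quantitative via a Lipschitz estimate; note that since $\Omega=\Omega'\times[0,1]$ is not symmetric about the origin, $\hat g$ is in general complex for either construction, so your $\psi\ast\tilde\psi$ variant (giving $\hat g=|\hat\psi|^2\ge 0$) is actually the cleanest way to satisfy condition~(3) literally.
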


\begin{proof}[Proof of Lemma \ref{constructg}]
Fix $z=(0,...,0,\frac12)\in\Omega$, and write $\varphi=\chi_{|x-z|\le\frac14}$. Let $g=\varphi\ast\varphi$, then both (1) and (2) above are satisfied, and
$$\hat g(0)=[\hat\varphi(0)]^2=\left[\int_{\R^n}\varphi(x)dx\right]>0.$$
Thus, $\hat g(\xi)\ge c_3$ in $\{|\xi|\le c_4\}$ because $\hat g$ is continuous.
\end{proof}

In order to evaluate $\|dE_\lambda(f_\lambda d\sigma)\|_{L^2(\Omega)}$, notice that $\widehat{f_\lambda d\sigma}(\xi)=\hat f_\lambda(\xi')$ and
$$[dE_\lambda(f_\lambda d\sigma)]^\wedge(\xi)=\delta(|\xi|^2-\lambda^2)\widehat{f_\lambda d\sigma}(\xi)=\frac{\hat f_\lambda(\xi')d\mu}{2\lambda},$$ 
where $d\mu$ is the surface measure on $\{|\xi|=\lambda\}$. Using the function $g$ constructed in Lemma \ref{constructg}, for any $\xi$ such that 
$$\xi\in\left\{0\le\lambda-|\xi|\le\frac{c_4}{2}\text{ and }|\xi'-\eta'_\lambda|\le\frac12\right\}:=G_\lambda,$$
we have 
$$[dE_\lambda(f_\lambda d\sigma)]^\wedge\ast\hat g(\xi)\ge\frac{2c_3}{\lambda}\int_{\{|\eta|=\lambda\}\cap\{|\eta-\xi|\le c_4\}}\hat f_\lambda(\eta')d\mu\ge\frac{c\cdot c_1\cdot c_3\cdot c^{n-1}_4}{\lambda},$$
in which we use the geometric fact that the area measure $|\{|\eta|=\lambda\}\cap\{|\eta-\xi|\le c_4\}|\sim c^{n-1}_4$ if $\xi$ is a fixed point near the sphere with $0\le\lambda-|\xi|\le\frac{c_4}{2}<\frac14$. Recall that $\eta'_\lambda=(\lambda,0,...,0)$, then the volume measure of $G_\lambda$ has
$$|G_\lambda|=\left|\left\{0\le\lambda-|\xi|\le\frac{c_4}{2}\text{ and }|\xi'-\eta'_\lambda|\le\frac12\right\}\right|\sim\sqrt\lambda.$$
Therefore,
$$\|dE_\lambda(f_\lambda d\sigma)g\|_{L^2(\R^n)}=c\left\|[dE_\lambda(f_\lambda d\sigma)]^\wedge\ast\hat g\right\|_{L^2(\R^n)}\ge c\lambda^{-1}|G_\lambda|^\frac12\ge c\lambda^{-\frac34}.$$
Now, since $g$ is supported in $\Omega$ and bounded from above,
$$\|dE_\lambda(f_\lambda d\sigma)\|_{L^2(\Omega)}=\|dE_\lambda(f_\lambda d\sigma)\chi_\Omega\|_{L^2(\R^n)}\ge c^{-1}_2\|dE_\lambda(f_\lambda d\sigma)g\|_{L^2(\R^n)}\ge c\lambda^{-\frac34},$$
and we have obtained \eqref{square}.

\subsection{Sharpness of Theorem \ref{thm:SLP}: Semiclassical single layer potentials in curved domains}\label{sec:SLPsharpcurved}

As in Section \ref{sec:SLPsharpgeneral}, we only need to prove the sharpness of $dE_\lambda$. We construct functions $\{f_{\lambda}\}$ such that
\begin{equation}\label{annulus}
\frac{\|dE_\lambda(f_\lambda d\sigma)\|_{L^2(\Omega)}}{\|f_\lambda\|_{L^2(\partial\Omega)}}\ge c\lambda^{-\frac56}
\end{equation}
where $\Omega$ is annulus.

Let $B_1=\{x\in\R^2,|x|<1\}$, $B_2=\{x\in\R^2,|x|<2\}$, and $\Omega=\{x\in\R^2,1<|x|<2\}$. We work in polar coordinates $(r,\theta)$ and set
$$f_k(x)=e^{ik\theta}\in L^2(\partial\Omega).$$
Then
$$u(x)=dE_\lambda(f_k d\sigma)(x)=aJ_k(\lambda r)e^{ik\theta},$$
in which $J_k$ is the Bessel function of the first kind and order $k$. We pick $\lambda=j_{k,1}$ as the first positive zero of $J_k$. Then $u$ solves the Dirichlet boundary value problem
$$\begin{cases}
-\Delta u=\lambda^2u & \text{ in }B_1,\\
u=0,\ \partial_ru=e^{ik\theta}& \text{ on }\partial B_1.
\end{cases}$$
We need to show that
$$\|u\|_{L^{2}(\Omega)}\geq{}c\lambda^{-\frac56}.$$
From \cite[Section 10.21.40]{DLMF}, $J'_k(\lambda)=O(k^{-\frac23})$. Thus
\begin{equation}\label{asympa}
a=\frac{1}{\lambda J'_k(\lambda)}=O(k^{-\frac13}).
\end{equation}
We also have
\begin{equation}\label{asymplambda}
\lambda=k+c_5k^\frac13+O(k^{-\frac13}),
\end{equation}
where $c_5=1.86...$ is an independent constant. Since $(\Delta+\lambda^2)u=0$,
$$\left[\partial_r^2+\frac1r\partial_r+\left(\lambda^2-\frac{k^2}{r^2}\right)\right]u=0.$$
Furthermore, $\partial_ru(x)=a\lambda J'_k(\lambda r)e^{ik\theta}$ and $\partial^2_ru(x)=a\lambda^2J''_k(\lambda r)e^{ik\theta}$. To evaluate $\|u\|_{L^2(\Omega)}$, notice that in $\R^n$,
$$\Delta=\partial_r^2+\frac{n-1}{r}\partial_r+\frac{1}{r^2}\Delta_{\mathbb S^{n-1}},$$
in which $\Delta_{\mathbb S^{n-1}}$ is the Laplacian on the sphere $\mathbb S^{n-1}$. Then the commutator
\begin{eqnarray*}
[\Delta,r\partial_r]&=&\left[\partial_r^2+\frac{n-1}{r}\partial_r+\frac{1}{r^2}\Delta_{\mathbb S^{n-1}},r\partial_r\right]\\
&=&\left[\partial_r^2,r\partial_r\right]+\left[\frac{n-1}{r}\partial_r,r\partial_r\right]+\left[\frac{1}{r^2}\Delta_{\mathbb S^{n-1}},r\partial_r\right]\\
&=&2\partial_r^2+\frac{2(n-1)}{r}\partial_r+\frac{2}{r^2}\Delta_{\mathbb S^{n-1}}\\
&=&2\Delta.
\end{eqnarray*}
Using the above facts and Green's formula, we have
\begin{eqnarray*}
-2\lambda^2\int_{|x|<R}|u|^2&=&2\int_{|x|<R}\Delta u\cdot\bar u=\int_{|x|<R}[\Delta,r\partial_r]u\cdot\bar u\\
&=&\int_{|x|<R}[\Delta+\lambda^2,r\partial_r]u\cdot\bar u\\
&=&\int_{|x|<R}(\Delta+\lambda^2)(r\partial_ru)\cdot\bar u-r\partial_ru\cdot(\Delta+\lambda^2)\bar u\\
&=&\int_{|x|=R}\partial_r(r\partial_ru)\cdot\bar u-r\partial_ru\cdot\partial_r\bar u\\
&=&\int_{|x|=R}\partial_ru\cdot\bar u+r\partial_r^2u\cdot\bar u-r|\partial_ru|^2\\
&=&\int_{|x|=R}[-r(\lambda^2-k^2r^{-2})]u\cdot\bar u-r|\partial_ru|^2\\
&=&-2a^2\pi R^2\left[(\lambda^2-k^2R^{-2})(J_k(\lambda R))^2+\lambda^2(J'_k(\lambda R))^2\right],
\end{eqnarray*}
which implies
\begin{equation}\label{L2}
\int_{|x|<R}|u|^2=a^2\pi R^2\left[\left(1-\frac{k^2}{\lambda^2R^2}\right)(J_k(\lambda R))^2+(J'_k(\lambda R))^2\right].
\end{equation}
If $R=1$, then note that $\lambda$ is a zero of $J_k$. As \eqref{asympa} gives $|a|\sim\lambda^{-\frac13}$  and $J'_k(\lambda)=J_k'(j_{k,1})\sim\lambda^{-\frac23}$ in \cite[Section 10.21.40]{DLMF},
\begin{equation}\label{L2B1}
\|u\|_{L^2(B_1)}=\left(\int_{|x|<1}|u|^2\right)^\frac12=|a|\sqrt\pi|J'_k(\lambda)|=c\lambda^{-1}.
\end{equation}
If $R=2$, then
\begin{equation}\label{L2B2}
\|u\|_{L^2(B_2)}=2|a|\sqrt\pi\left[\left(1-\frac{k^2}{4\lambda^2}\right)(J_k(2\lambda ))^2+(J'_k(2\lambda))^2\right]^\frac12\ge c\lambda^{-\frac56}.
\end{equation}
Here, we use the asymptotic expansions of Bessel functions for large orders in \cite{DLMF}
$$J_k(k\sec\beta)\sim\left(\frac{1}{k\tan\beta}\right)^\frac12\cos\left(k\tan\beta-k\beta-\frac14\pi\right),$$
and
$$J'_k(k\sec\beta)\sim\left(\frac{\sin(2\beta)}{k}\right)^\frac12\sin\left(k\tan\beta-k\beta-\frac14\pi\right),$$
in which $\sec\beta=2\lambda/k\to2$ in the view of \eqref{asymplambda}, and thus $\beta\sim\frac\pi3$. Therefore,
$$\|u\|_{L^2(\Omega)}=\|u\|_{L^2(B_2)}-\|u\|_{L^2(B_1)}\ge c\lambda^{-\frac56},$$
as required.

\subsection{Further remark: Semiclassical single layer potentials in strictly convex domains}

If we choose $R=1+\varepsilon$ in \eqref{L2} for any fixed $\varepsilon>0$, then
\begin{equation}\tag{\ref{L2B2}'}
\|u\|_{L^2(B_{1+\varepsilon})}\ge c\lambda^{-\frac56}
\end{equation}
is valid when $k$, and therefore $\lambda$, is large. (We can argue similarly by setting $\beta$ asymptotically fixed depending only on $\varepsilon$.) Comparing \eqref{L2B1} and \eqref{L2B2}', we see that the $L^2$ norm of $u$ is essentially concentrated outside the disc. As discussed in the Introduction, this is because the estimates are dominated by the semiclassical singularities of $u$, which propagate along the tangent lines of the circle.  All such lines lie outside of $B_1$. Therefore, $\|u\|_{L^{2}(B_{1})}\sim\lambda^{-1}$ is smaller than $\|u\|_{L^{2}(B_{1+\epsilon}\setminus{}B_{1})}\sim\lambda^{-\frac56}$.

However, in the case when the domain is flat as in Section \ref{sec:SLPsharpgeneral}, the tangent lines coincide with the boundary, one may get worse $L^2$ bound for $u$ ($\sim\lambda^{-\frac34}$).

The above observation motivates us to consider the problem in strictly convex domains, in which case all the tangent lines lie outside the domain. Analogously to the unit disc, we make the following conjecture.

\begin{conjecture}\label{conj:SLPconvex}
If $\Omega$ is strictly convex, then
\begin{equation}\label{eq:SLPconvex}
\|S_\lambda(f)\|_{L^2(\Omega)}\le c\lambda^{-1}\|f\|_{L^2(\partial\Omega)},
\end{equation}
and
\begin{equation}\label{eq:dEconvex}
\|dE_\lambda(fd\sigma)\|_{L^2(\Omega)}\le c\lambda^{-1}\|f\|_{L^2(\partial\Omega)}.
\end{equation}
\end{conjecture}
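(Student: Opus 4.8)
\emph{Proof proposal for Conjecture~\ref{conj:SLPconvex}.} The plan is to run the adjoint-plus-quasimode argument of Theorem~\ref{thm:SLP}, but replace the restriction estimate \eqref{curverestric} by a loss-free one that exploits convexity. By Stone's formula \eqref{eq:Stone} it suffices to prove \eqref{eq:SLPconvex}, which then yields \eqref{eq:dEconvex}. Dualising, we must bound $(S^{+}_{\lambda})^{\star}u=R_{\partial\Omega}\bigl(R^{-}_{\lambda}(u\chi_{\Omega})\bigr)\colon L^{2}(\Omega)\to L^{2}(\partial\Omega)$; split $(S^{+}_{\lambda})^{\star}=S^{0}+\tilde S$ as in \eqref{decomposition}. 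Proposition~\ref{S0prop} disposes of $S^{0}$ with room to spare, and for $v=\tilde S u$ the computations of Section~\ref{sec:SLP} give $\|v\|_{L^{2}(\R^{n})}\lesssim h\|u\|_{L^{2}(\Omega)}$ and $\|E[v]\|_{L^{2}(\R^{n})}\lesssim h^{2}\|u\|_{L^{2}(\Omega)}$, so $\|v\|_{L^{2}(\R^{n})}+h^{-1}\|E[v]\|_{L^{2}(\R^{n})}\lesssim h\|u\|_{L^{2}(\Omega)}$. Hence \eqref{eq:SLPconvex} would follow from the loss-free hypersurface bound
\begin{equation*}
\|v\|_{L^{2}(\partial\Omega)}\lesssim\|v\|_{L^{2}(\R^{n})}+h^{-1}\|E[v]\|_{L^{2}(\R^{n})},
\end{equation*}
an $h^{-1/6}$ improvement over \eqref{curverestric} that is false for a generic quasimode --- it fails exactly on the beams concentrating on geodesics tangent to $\partial\Omega$ of Section~\ref{sec:SLPsharpcurved} --- but which one expects for this particular $v$ when $\Omega$ is strictly convex.

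The mechanism is geometric. When $\Omega$ is strictly convex a line tangent to $\partial\Omega$ at $x_{0}$ is a supporting line: it meets $\bar\Omega$ only at $x_{0}$, and $\partial\Omega$ peels away from it quadratically, $\operatorname{dist}(x_{0}+s\xi_{0},\bar\Omega)\asymp s^{2}$ for unit $\xi_{0}\in T_{x_{0}}\partial\Omega$. Since $\tilde S$ has kernel $K^{\star}_{\lambda}(x-y)\bigl(1-\zeta(M^{-1}\lambda(x-y))\bigr)$ and the $y$-phase of $K^{\star}_{\lambda}(x-y)$ has gradient $\pm\lambda(y-x)/|y-x|$, the frequency content of $v(x_{0})$ is carried by the directions $\pm\lambda(y-x_{0})/|y-x_{0}|$ with $y\in\supp u\subseteq\bar\Omega$ and $|y-x_{0}|\geq Mh$; a covector tangent to $\partial\Omega$ at $x_{0}$ forces $y$ onto the tangent line, which is impossible for $|y-x_{0}|>0$. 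I would make this quantitative with a semiclassical partition $\Id=\chi_{\mathrm{tr}}(x,hD)+\chi_{\mathrm{tan}}(x,hD)$ near $\partial\Omega$, with $\chi_{\mathrm{tan}}$ microsupported in a fixed neighbourhood of the glancing set (unit covectors over $\partial\Omega$ tangent to $\partial\Omega$). On the transversal part, $\|R_{\partial\Omega}\chi_{\mathrm{tr}}(x,hD)v\|_{L^{2}(\partial\Omega)}\lesssim\|v\|_{L^{2}(\R^{n})}+h^{-1}\|E[v]\|_{L^{2}(\R^{n})}$, by the loss-free transversal trace estimate together with the elliptic parametrix off the characteristic variety (on which the smallness $\|E[v]\|\lesssim h^{2}\|u\|$ absorbs the $h^{-1/2}$ in the trace inequality). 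Everything therefore reduces to showing $\|R_{\partial\Omega}\chi_{\mathrm{tan}}(x,hD)v\|_{L^{2}(\partial\Omega)}\lesssim h\|u\|_{L^{2}(\Omega)}$.

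This tangential piece is the crux, and it is the real obstacle. Propagation of singularities only yields that $\chi_{\mathrm{tan}}(x,hD)v$ is $O(h^{\infty})$ microlocally \emph{away from} $S^{*}\partial\Omega$ itself; what governs the estimate is the glancing transition layer, an $h^{2/3}$-wide neighbourhood of the tangency points where the tangent lines still graze $\bar\Omega$ to within $O(h^{2/3})$ (an $h^{1/3}$-neighbourhood of $S^{*}\partial\Omega$ in phase space), and there the relevant asymptotics are of Airy type --- as for $J_{k}(\lambda r)$ with $\lambda r$ near $k$, cf.\ Section~\ref{sec:SLPsharpcurved} --- rather than simple transport, so the constant in propagation degrades. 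Controlling it calls for a second microlocalisation at the tangency points: decomposing the $y$-integral of $\tilde S u$ into dyadic shells $|x-y|\sim 2^{j}\lambda^{-1}$ and, within each, isolating the thin sliver of $\Omega$ that the tangent line at $x$ nearly grazes, together with a uniform-in-$\lambda$ glancing parametrix of Melrose--Taylor type for the diagonal-excised kernel, using the quadratic separation to beat the pointwise size of $K^{\star}_{\lambda}$ through the oscillation in both the $y$- and the $x$-integrals. That this should succeed is strongly indicated by the explicit case of the disc, where the Bessel asymptotics of Section~\ref{sec:SLPsharpcurved} (the normalisation $a=(\lambda J'_{k}(\lambda))^{-1}$ together with $\|J_{k}(\lambda\,\cdot)\|_{L^{2}([0,1])}$) already produce exactly the bound $\lambda^{-1}$; but converting the heuristic into a uniform estimate for general strictly convex $\Omega$ is precisely where the work lies, and is the reason the statement remains a conjecture.
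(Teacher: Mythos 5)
This statement is a \emph{conjecture} in the paper: the authors offer no proof, only the propagation-of-singularities heuristic (tangent lines to $\partial\Omega$ are supporting lines of a strictly convex domain, so the dominant singularities of $S_\lambda f$ live outside $\Omega$) together with evidence that the exponent $-1$ would be sharp, from the disc computation \eqref{L2B1} and from the Dirichlet-eigenfunction bound of \cite{BLR, HTao02, HTao10}. Your proposal follows essentially the same heuristic, and your reductions up to the microlocal splitting are sound: Stone's formula, the decomposition \eqref{decomposition}, Proposition \ref{S0prop}, and the bounds $\|v\|_{L^2(\R^n)}\lesssim h\|u\|_{L^2(\Omega)}$, $\|E[v]\|_{L^2(\R^n)}\lesssim h^2\|u\|_{L^2(\Omega)}$ are all as in Section \ref{sec:SLP}, and the transversal piece can indeed be handled losslessly. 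But the argument is not a proof, for exactly the reason you name: everything hinges on a loss-free restriction estimate for $\chi_{\mathrm{tan}}(x,hD)v$ near the glancing set, and this is never established. The difficulty is not merely technical. The generic curved-case bound \eqref{curverestric} loses $h^{-1/6}$, and that loss is produced precisely by $O(h^{1/3})$-neighbourhoods of glancing in phase space; the disc computation of Section \ref{sec:SLPsharpcurved} shows the relevant Airy-regime quantities ($a\sim\lambda^{-1/3}$, $J_k'(j_{k,1})\sim\lambda^{-2/3}$) conspire to give exactly $\lambda^{-1}$ inside $B_1$ with no margin, so no soft propagation or crude second-microlocal counting argument can close the estimate --- one genuinely needs a uniform glancing (Melrose--Taylor type) parametrix analysis for the diagonal-excised kernel, which you invoke but do not carry out. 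Since you acknowledge this yourself, the honest verdict is that your proposal is a reasonable programme for attacking the conjecture, consistent with the authors' own heuristic, but it leaves the statement exactly as open as the paper does.
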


The computation in Section \ref{sec:SLPsharpcurved} already gave the sharp example in this case once one observes \eqref{L2B1}, which says 
$$\frac{\|dE_\lambda(f_\lambda d\sigma)\|_{L^2(\Omega)}}{\|f_\lambda\|_{L^2(\partial\Omega)}}=c\lambda^{-1}$$
is valid in the unit ball for some constant $c$ and functions $\{f_\lambda\}$.

In fact, the estimates in Conjecture \ref{conj:SLPconvex} are sharp in any strictly convex domain: From \cite{BLR, HTao02, HTao10},
$$\|u\|_{L^2(\Omega)}\approx\lambda^{-1}\|\partial_\nu u\|_{L^2(\partial\Omega)},$$ 
where $u$ is a Dirichlet eigenfunction in $\Omega$, therefore $u=S_\lambda(\partial_\nu u)$, and \eqref{eq:SLPconvex} and \eqref{eq:dEconvex} are sharp in all strictly convex domains.

\subsection{Sharpness of Theorem \ref{thm:DLP}: Semiclassical double layer potentials}

We show the sharpness in the unit ball:
\begin{equation}\label{ball}
\frac{\|D_\lambda(f_\lambda d\sigma)\|_{L^2(\Omega)}}{\|f_\lambda\|_{L^2(\partial\Omega)}}\ge c
\end{equation}
for some constant $c$ and functions $\{f_\lambda\}$.

Consider the Neumann eigenfunctions:
$$\begin{cases}
-\Delta u=\lambda^2u & \text{ in }B_1,\\
\partial_\nu u=0,\quad u=e^{ik\theta}& \text{ on }\partial B_1.
\end{cases}$$
Then, adopting the same notations as in Section \ref{sec:SLPsharpcurved}, we have
$$u(x)=D_\lambda(f_k d\sigma)(x)=aJ_k(\lambda r)e^{ik\theta},$$
in which $\lambda=j'_{k,l}$ is the $l$-th zero of $J'_k$, and $a=1/J_k(\lambda)$.

We use identity \eqref{L2} with $R=1$ and $J'_k(\lambda)=0$ to obtain
$$\|u\|_{L^2(B_1)}=\left(\int_{|x|<1}|u|^2\right)^\frac12=\sqrt{\pi\left(1-\frac{k^2}{\lambda^2}\right)}\ge c,$$
by picking $\lambda\sim2k$. In fact, $\lambda=j'_{k,l}\to\infty$ as $l\gg k\to\infty$.

On the other hand, to saturate the inequality \eqref{eq:Tataru}:
$$\|u\|_{L^2(\partial B_1)}\lesssim\lambda^\frac13\|u\|_{L^2(B_1)},$$
we pick $\lambda=j'_{k,1}$ as the first zero of $J'_k$. From \cite[Section 10.21.40]{DLMF}, we have
$$\lambda=j'_{k,1}=k+O(k^\frac13),$$
then
$$\|u\|_{L^2(B_1)}=\sqrt{\pi\left(1-\frac{k^2}{(j'_{k,1})^2}\right)}=O(k^{-\frac13})=O(\lambda^{-\frac13}).$$
As $\|u\|_{L^{2}(\partial{}B_{1})}\sim{}1$ this implies that
$$\|u\|_{L^{2}(\partial{}B_{1}}\geq{}c\lambda^{\frac13}\|u\|_{L^{2}(B_{1})}.$$
See also \cite[Example 7]{HTao02} on the boundary estimates of Neumann eigenfunctions.

\appendix

\newcommand{\la}{\langle}
\renewcommand{\ra}{\rangle}

\section{The Single and Double Layer Operators}

\begin{center}
{\sc By Jeffrey Galkowski}\footnote{J.G.'s address is
Department of Mathematics, University of California, Berkeley, CA 94720-3840, USA,
and e-mail address is \texttt{jeffrey.galkowski@math.berkeley.edu}.}

\end{center}
\vspace*{3mm}

In this appendix, we give high frequency estimate for the double and single layer operators. We do this by adapting the methods of \cite{GS} to the double layer operator. The estimates on single layer operators appear in \cite[Theorem 1.2]{GS}, but we repeat them below for the convenience of the reader. 

We use the same notation as in the prequel. In addition, let $\gamma:H^s(\re^d)\to H^{s-1/2}(\partial\Omega)$, $s>1/2$ denote restriction to $\partial\Omega$. Then, define the \emph{single layer operator},
$$\mathcal{S}_{\lambda}^+:=\gamma S^+_\lambda :L^2(\partial\Omega)\to L^2(\partial\Omega)$$
and the \emph{double layer operator}
$$\Dl:L^2(\partial\Omega)\to L^2(\partial\Omega)$$
where 
$$\Dl f(x)=\int _{\partial\Omega}\partial_{\nu_y}K_{\lambda}(x-y)f(y)dy.$$

\begin{theorem}
\label{thm:main}
Let $\partial\Omega\subset\re^d$ be a finite union of compact embedded $C^\infty$ hypersurfaces. Then there exists $\lambda_0$ such that for $\lambda>\lambda_0$,
\begin{equation}
\label{eqn:optimalFlat}
\|\Sl\|_{L^2(\partial\Omega)\to L^2(\partial\Omega)}\leq C\,\lambda^{-\frac 12}\,\log \lambda\,,\quad \quad
\|\Dl\|_{L^2(\partial\Omega)\to L^2(\partial\Omega)}\leq C\,\lambda^{\frac 14}\,\log\lambda\,.
\end{equation}
Moreover, if $\partial\Omega$ is a finite union of compact subsets of curved $C^\infty$ hypersurfaces, then 
\begin{equation}
\label{eqn:optimalConvex}
\|\Sl\|_{L^2(\partial\Omega)\to L^2(\partial\Omega)}\leq C\,\lambda^{-\frac 23}\,\log\lambda\,,\quad\quad
\|\Dl\|_{L^2(\partial\Omega)\to L^2(\partial\Omega)}\leq C\,\lambda^{\frac 16}\,\log\lambda\,.
\end{equation}
Moreover, modulo the factor $\log \lambda$, all of the above estimates are sharp.
\end{theorem}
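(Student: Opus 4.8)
The plan is to follow the method of \cite{GS}: realize $\Sl$ and $\Dl$ as oscillatory integral operators on $\partial\Omega$ and control them through the geometry of the Euclidean distance $\Phi(x,y)=|x-y|$ restricted to $\partial\Omega\times\partial\Omega$, with the flat/curved dichotomy entering exactly as in the hypersurface restriction estimates \eqref{generalrestric}--\eqref{curverestric} and in Theorem \ref{thm:SLP}. As in Sections \ref{sec:SLP}--\ref{sec:DLP}, the first step is to split the kernels $K_\lambda(x-y)$ and $\partial_{\nu_y}K_\lambda(x-y)$ on $\partial\Omega\times\partial\Omega$ into a near-diagonal piece supported in $|x-y|\le M\lambda^{-1}$ and an off-diagonal remainder. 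On the near-diagonal piece the Hankel asymptotics give $|K_\lambda(x-y)|\lesssim|x-y|^{2-n}$ (or $|\log(\lambda|x-y|)|$ in $\re^2$), and the double layer kernel carries in addition the factor $(x-y)\cdot\nu_y/|x-y|$, which is $O(|x-y|)$ since both points lie on the smooth hypersurface and therefore compensates the extra derivative; in either case the $L^1$-mass over the $(n-1)$-dimensional boundary is $O(\lambda^{-1})$, so Schur's test bounds both near-diagonal contributions by $\lambda^{-1}$, which is better than required.

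For the off-diagonal part one uses the outgoing asymptotics $K_\lambda(x-y)\sim\lambda^{(n-3)/2}|x-y|^{-(n-1)/2}e^{i\lambda|x-y|}$, so that $\Sl$ becomes an oscillatory integral operator with phase $\Phi$ while $\Dl$ is the same with the extra amplitude $\lambda\,(x-y)\cdot\nu_y/|x-y|$. Decompose dyadically in $|x-y|\sim\rho$, $\lambda^{-1}\le\rho\lesssim\mathrm{diam}\,\Omega$, and estimate each piece by the standard $L^2$ bound for oscillatory integral operators, whose gain depends on the rank and size of the non-degenerate block of the mixed Hessian $\partial_x\partial_y\Phi$ on $\partial\Omega\times\partial\Omega$ at scale $\rho$. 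The chord-transverse directions always supply an $(n-2)$-dimensional non-degenerate block of size $\rho^{-1}$; the remaining chord direction is the tangential (glancing) direction, which is genuinely degenerate on a flat piece and of second order on a curved piece, where the curvature makes it effective once $\rho\gtrsim\lambda^{-1/3}$. Summing the dyadic pieces yields the exponent $-\tfrac12$ for $\Sl$ in general and $-\tfrac23$ when $\partial\Omega$ is curved --- the same mechanism as $\lambda^{-3/4}$ versus $\lambda^{-5/6}$ in Theorem \ref{thm:SLP}. Carrying the additional factor $\lambda\,(x-y)\cdot\nu_y/|x-y|$ through the same computation, and using its vanishing in the glancing region, one obtains $\|\Dl\|\lesssim\lambda^{1/4}\log\lambda$ in general and $\lambda^{1/6}\log\lambda$ for curved $\partial\Omega$; as the exponents indicate, $\|\Dl\|$ comes out as $\lambda$ times the $L^2(\partial\Omega)\to L^2(\Omega)$ bounds of Theorem \ref{thm:SLP}.

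The heart of the argument --- and the main obstacle --- is the analysis near the glancing set $\{x-y\text{ tangent to }\partial\Omega\text{ at }x\}$, where the symbol of $\Sl$ (and of $\Dl$) inflates like $|\,|\xi'|-1\,|^{-1/2}$ in the semiclassical tangential frequency and the crude dyadic estimate is insufficient. There one performs a second microlocalization in the distance to the glancing set, reducing the model operator to a flat one --- estimated directly, as in the computation on a line in Section \ref{sharpness} --- when $\partial\Omega$ is flat, and to an Airy-type integral operator when $\partial\Omega$ is curved; the logarithmic losses also enter here, when this decomposition is summed. This is exactly the content of \cite{GS} for $\Sl$, and the novelty of the appendix is to push the same second-microlocal analysis through for $\Dl$, keeping track of the normal-derivative factor $\lambda\,(x-y)\cdot\nu_y/|x-y|$, whose vanishing on the glancing set is governed by the second fundamental form of $\partial\Omega$.

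Sharpness is established by explicit examples, in the spirit of Section \ref{sharpness}. For $\Sl$, on a flat boundary segment the tangentially oscillating data $e^{-i\lambda t}$ gives $\|\Sl f\|_{L^2(\partial\Omega)}\gtrsim\lambda^{-1/2}\|f\|_{L^2(\partial\Omega)}$ directly from the Hankel asymptotics, while on the disc the data $e^{ik\theta}$ with $\lambda$ near the turning point $k$ produces the explicit eigenvalue $\frac{i\pi}{2}H^{(1)}_k(\lambda)J_k(\lambda)$, of modulus $\sim(\lambda^2-k^2)^{-1/2}\sim\lambda^{-2/3}$ by the large-order Bessel asymptotics of \cite{DLMF}. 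For $\Dl$ one argues similarly, choosing data in a suitable domain (with a flat, respectively curved, boundary piece) whose layer potential has a boundary trace saturating the $\lambda^{1/4}$ (respectively $\lambda^{1/6}$) bound, the relevant computations being parallel to those above and to the discussion around Tataru's estimate \eqref{eq:Tataru}. In every case the examples reproduce the stated rates up to the factor $\log\lambda$, which is a defect of the dyadic and second-microlocal summations rather than of the operators.
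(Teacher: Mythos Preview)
Your route is genuinely different from the appendix's, and it is worth spelling out the contrast. The paper does \emph{not} analyze $\Sl$ and $\Dl$ as oscillatory integral operators with phase $|x-y|$ on $\partial\Omega\times\partial\Omega$. Instead, after a frequency cutoff $\psi(\lambda^{-1}|D|)$, it passes to the Fourier side via Plancherel and reduces the bilinear forms $\langle R_\lambda^+ f\delta_\Gamma,\,g\delta_\Gamma\rangle$ and $\langle R_\lambda^+ L^*(f\delta_\Gamma),\,g\delta_\Gamma\rangle$ to controlling spherical averages $\int |\widehat{f\delta_\Gamma}|^2\,\delta(|\xi|-r)\,d\xi$ and $\int |\widehat{L^*f\delta_\Gamma}|^2\,\delta(|\xi|-r)\,d\xi$. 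These in turn are exactly dual to the quasimode restriction bounds of \cite{BGT,HTacy,T,T14}, which are quoted as black boxes; the $\log\lambda$ arises from a single explicit integral $\int_{1\le|r-\lambda|\le M\lambda}|r^2-\lambda^2|^{-1}\,dr$. The high-frequency remainder $(1-\psi(\lambda^{-1}|D|))$ is handled by semiclassical pseudodifferential calculus and, for $\Dl$, comparison with the $\lambda=0$ operator. There is no dyadic decomposition in $|x-y|$, no mixed-Hessian analysis, and no second microlocalization at glancing; all the geometry is absorbed into the cited restriction theorems. Your approach could in principle be made to work, but you would essentially be reproving those restriction theorems in a dual formulation, and the ``second microlocalization near glancing'' that you defer is precisely the hard part you have not done.

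There is also a concrete gap on the sharpness side. For $\Sl$ the paper gives a soft argument via Stone's formula: $\gamma dE_\lambda\gamma^*=(2\pi i)^{-1}(\Sl-\mathcal{S}_\lambda^-)$ together with the known sharpness of the restriction bounds immediately forces the lower bounds on $\Sl$, with no explicit Bessel computation needed. For $\Dl$, however, your proposal is genuinely incomplete: you say only that one ``chooses data in a suitable domain \dots\ saturating the $\lambda^{1/4}$ (respectively $\lambda^{1/6}$) bound,'' but the curved case requires a nontrivial geometric construction. The paper builds $\Gamma_1$ from a plane curve $\tilde\gamma$ and $\Gamma_2$ from its \emph{evolute} $\tilde\sigma(t)=\tilde\gamma(t)+n(t)/\kappa(t)$, so that $|\gamma(y)-\sigma(x)|$ is stationary to third order in $x_1-y_1$; testing against bumps localized at scales $\lambda^{-1/3}$ in the chord direction and $\lambda^{-1/2}$ transversally then yields the $\lambda^{1/6}$ lower bound. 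Nothing in your sketch suggests this construction, and the analogy with Tataru's estimate \eqref{eq:Tataru} does not supply it.
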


Such mapping bounds of layer operators in lower dimension cases ($d=2,3$) have been studied by Chandler-Wilde, Graham, Langdon, and Lindner \cite{C-WGLL}. In particular, they showed the upper bounds $\|\Sl\|_{L^2(\partial\Omega)\to L^2(\partial\Omega)}\le c\lambda^{(d-3)/2}$ and $\|\Dl\|_{L^2(\partial\Omega)\to L^2(\partial\Omega)}\le c\lambda^{(d-1)/2}$. In $2$-$\dim$, they also proved that $\|\Sl\|_{L^2(\partial\Omega)\to L^2(\partial\Omega)}\ge c\lambda^{-1/2}$ if $\pO$ contains a flat piece and $\ge c\lambda^{-2/3}$ if $\pO$ is curved. In $2$-$\dim$ they also show the existence of $\pO$ with $\|\Dl\|_{L^2(\partial\Omega)\to L^2(\partial\Omega)}\ge C\lambda^{1/4}$ and curved $\pO$ with $\|\Dl\|_{L^2(\pO)\to L^2(\pO)}\geq C\lambda^{1/8}$. The special cases of the circle and sphere are studied previously by various authors. We refer to \cite{C-WGLL} for more background in this area. This appendix improves these estimates by giving ``nearly'' sharp bounds in all dimensions. We also point out that the approaches to prove estimates for semiclassical layer potentials and operators are completely different.

 In section \ref{sec:mainProof} we prove the upper bounds in Theorem \ref{thm:main}. In sections \ref{sec:sharpSLO} and \ref{sec:sharpDLO} we show that the exponents on $ \lambda$ in Theorem \ref{thm:main} are sharp. 

\noindent {\bf Remark:} If we impose the condition that $\Omega$ is convex with piecewise smooth, $C^{1,1}$ boundary, then we expect that $\Dl$ is uniformly bounded in $\lambda$ but we do not consider that here.

\subsection{Proof of Theorem \ref{thm:main}}
\label{sec:mainProof}

To analyze the single and double layer operators, we rewrite them in terms of the outgoing free resolvent. In particular, we have that 
$$\Sl=\gamma R_\lambda^+ \gamma^*$$
where $R_\lambda^+$ is the outgoing free resolvent. 

To understand $\Dl$ in relation to the free resolvent, we let $L$ be a vector field with $L|_{\partial\Omega}=\partial_\nu$. Then, for $x_0\in \Omega $, $x\in \partial\Omega$, we have
$$\Dl f(x)=\lim_{x_0\to x}\int R_\lambda^+(x_0-y)(L^*(fd\sigma))(y)-\recip{2}f(x)=\lim_{x_0\to x}R_\lambda^+L^*(fd\sigma)(x_0)-\recip{2}f(x).$$
where $L^*=-L-\text{div}L$ and $R_0$ is the outgoing free resolvent (see for example \cite[Section 5]{SMT}, \cite[Section 7.11]{Taylor}).
Moreover,
$$\Dl:L^2(\partial\Omega)\to L^2(\partial\Omega).$$

Let $\la \cdot \ra :=2+|\cdot|$. 
\begin{lemma}
\label{lem:Q}
Suppose that for $\Gamma\Subset \re^d$ any compact embedded $C^\infty$ hypersurface, and some $\alpha\,,\,\beta>0$, 
\begin{align}
\label{eqn:fourierRestrictionEstimate1}
\int |\widehat{L^*f \delta_\Gamma}|^2(\xi)\delta(|\xi|-r)d\xi&\leq C_{\Gamma}\la r\ra^{2\alpha}\|f\|^2_{L^2(\Gamma)},\\
\label{eqn:fourierRestrictionEstimate2}
\int |\widehat{f \delta_\Gamma}|^2(\xi)\delta(|\xi|-r)d\xi&\leq C_{\Gamma}\la r\ra^{2\beta}\|f\|^2_{L^2(\Gamma)}.
\end{align}
Let $\Gamma_1,\,\Gamma_2\Subset \re^d$ be compact embedded $C^\infty$ hypersurfaces.
Let $L$ be a vector field with $L=\partial_{\nu}$ on $\Gamma_1$ for some choice of normal $\nu$ on $\Gamma_1$ and $\psi\in C_0^\infty(\re)$ with $\psi\equiv 1$ in neighborhood of $0$. Then define for $f\in L^2(\Gamma_1)$, $g\in L^2(\Gamma_2)$
$$
\Qs(f,g):=\int R_\lambda^+(\psi(\lambda^{-1}D)f\delta_{\Gamma_1})\bar{g}\delta_{\Gamma_2}\,,\quad\quad
\Qa(f,g):=\int R_\lambda^+(\psi(\lambda^{-1}D)L^*(f\delta_{\Gamma_1}))\bar{g}\delta_{\Gamma_2}.$$
Then for $\Im\lambda>0$, $|\lambda|\geq  c>0$,
\begin{align}
\label{eqn:lowFreqSingle}
|\Qs(f,g)|&\leq C_{\Gamma_1,\Gamma_2}\la \lambda \ra ^{2\alpha-1}\log \la \lambda\ra\|f\|_{L^2(\Gamma_1)}\|g\|_{L^2(\Gamma_2)}\\
\label{eqn:lowFreqDouble}
|\Qa(f,g)|&\leq C_{\Gamma_1,\Gamma_2,\psi}\la \lambda \ra ^{\alpha +\beta-1}\log \la \lambda\ra\|f\|_{L^2(\Gamma_1)}\|g\|_{L^2(\Gamma_2)}
\end{align}
\end{lemma}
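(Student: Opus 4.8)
The plan is to move everything to the Fourier side, where $R_\lambda^+$ acts as the multiplier $(|\xi|^2-\lambda^2)^{-1}$ — legitimate because $\Im\lambda>0$ keeps $|\xi|^2-\lambda^2$ from vanishing on $\re^d$ — and then pass to polar coordinates $\xi=r\omega$, which collapses both bilinear forms to a single one-dimensional integral. Since $\psi(\lambda^{-1}D)$ truncates to $|\xi|\lesssim\langle\lambda\rangle$, Parseval gives
$$\Qa(f,g)=c_d\int_0^\infty\frac{\psi(\lambda^{-1}r)}{r^2-\lambda^2}\,G(r)\,dr,\qquad G(r):=r^{d-1}\!\!\int_{\mathbb{S}^{d-1}}\!\!\widehat{L^*(f\delta_{\Gamma_1})}(r\omega)\,\overline{\widehat{g\delta_{\Gamma_2}}(r\omega)}\,d\omega,$$
and likewise for $\Qs$ with $f\delta_{\Gamma_1}$ in place of $L^*(f\delta_{\Gamma_1})$; all integrals converge absolutely. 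Cauchy--Schwarz on $\mathbb{S}^{d-1}$ together with \eqref{eqn:fourierRestrictionEstimate1} for the $L^*$-factor and \eqref{eqn:fourierRestrictionEstimate2} for the $g$-factor yields $|G(r)|\le C_{\Gamma_1,\Gamma_2}\langle r\rangle^{\gamma}\|f\|_{L^2(\Gamma_1)}\|g\|_{L^2(\Gamma_2)}$, with $\gamma=\alpha+\beta$ for $\Qa$ and $\gamma=2\beta$ for $\Qs$ (only \eqref{eqn:fourierRestrictionEstimate2} being used in the latter). Thus the whole lemma reduces to a one-dimensional estimate for $\int_0^\infty\psi(\lambda^{-1}r)G(r)(r^2-\lambda^2)^{-1}dr$.

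The key observation making that estimate work uniformly down to $\Im\lambda=0$ is that $G$ \emph{and its derivative} $G'$ satisfy the same bound $\lesssim\langle r\rangle^\gamma\|f\|\|g\|$. Indeed $\partial_{\xi_k}\widehat{h\delta_\Gamma}=\widehat{(-iy_k)h\delta_\Gamma}$, and on the compact hypersurface $\Gamma$ the coordinate $y\mapsto y_k$ is bounded, so $\|y_kh\|_{L^2(\Gamma)}\lesssim_\Gamma\|h\|_{L^2(\Gamma)}$ and \eqref{eqn:fourierRestrictionEstimate2} reapplies to $y_kh$; for the factor $\widehat{L^*(f\delta_{\Gamma_1})}$ one first commutes $y_k$ past $L^*$ via $y_kL^*(f\delta_{\Gamma_1})=L^*((y_kf)\delta_{\Gamma_1})+a_kf\delta_{\Gamma_1}$ (when $L=\sum_j a_j\partial_j$), costing one more application each of \eqref{eqn:fourierRestrictionEstimate1} and \eqref{eqn:fourierRestrictionEstimate2}. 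Differentiating $G$ in $r$ merely redistributes such bounded factors, so $|G'(r)|\lesssim_{\Gamma_1,\Gamma_2}\langle r\rangle^\gamma\|f\|_{L^2(\Gamma_1)}\|g\|_{L^2(\Gamma_2)}$ as well.

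For the one-dimensional estimate, set $r_0:=|\Re\lambda|$; then for every $r\ge0$ one has $|r^2-\lambda^2|=|r-\lambda|\,|r+\lambda|\ge|\lambda|\max\{|r-r_0|,\Im\lambda\}$, because one of the two factors always has modulus $\ge|\lambda|$ and the other $\ge\sqrt{(r-r_0)^2+(\Im\lambda)^2}$. If $|\Re\lambda|\lesssim\Im\lambda$ then $\Im\lambda\sim\langle\lambda\rangle$, the symbol is $\gtrsim\langle\lambda\rangle^2$ throughout, and the bound $\lesssim\langle\lambda\rangle^{\gamma-1}\|f\|\|g\|$ is immediate. Otherwise $r_0\sim\langle\lambda\rangle$, and I would split the integral at $|r-r_0|=1$. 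On $\{|r-r_0|\ge1\}$ (and $r\lesssim\langle\lambda\rangle$, by the cutoff) one uses $|r^2-\lambda^2|\gtrsim\langle\lambda\rangle|r-r_0|$ and $\langle r\rangle^\gamma\lesssim\langle\lambda\rangle^\gamma$; then $\int_1^{C\langle\lambda\rangle}|r-r_0|^{-1}dr\lesssim\log\langle\lambda\rangle$ gives exactly the asserted bound, and is the only place the logarithm appears. On $\{|r-r_0|\le1\}$ the non-singular factor among $r\pm\lambda$ and the cutoff $\psi(\lambda^{-1}r)$ are smooth there with all derivatives $\lesssim\langle\lambda\rangle^{-1}$, resp.\ $O(1)$, so this piece has the form $c_d\int_{|r-r_0|\le1}\Phi(r)\bigl((r-r_0)-i\sigma\Im\lambda\bigr)^{-1}dr$ with $\sigma=\pm1$ and $|\Phi|,|\Phi'|\lesssim\langle\lambda\rangle^{\gamma-1}\|f\|\|g\|$ (this is where the bound on $G'$ is used). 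Integrating by parts against $\tfrac{d}{dr}\log\bigl((r-r_0)-i\sigma\Im\lambda\bigr)$ and using $\bigl|\log\bigl((r-r_0)-i\sigma\Im\lambda\bigr)\bigr|\le\bigl|\log\sqrt{(r-r_0)^2+(\Im\lambda)^2}\bigr|+\pi$, whose integral over $|r-r_0|\le1$ is bounded uniformly in $\Im\lambda$, controls this last piece by $C\langle\lambda\rangle^{\gamma-1}\|f\|\|g\|$. Adding the contributions yields \eqref{eqn:lowFreqSingle}--\eqref{eqn:lowFreqDouble}.

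The main obstacle is precisely the region $\{|r-r_0|\le1\}$: estimating it by absolute values with only $|G|\lesssim\langle r\rangle^\gamma$ would cost a factor $\log\bigl((\Im\lambda)^{-1}\bigr)$ and destroy the uniformity in $\Im\lambda$ that the statement demands. Integration by parts in $r$ trades the non-integrable $|(r-r_0)-i\sigma\Im\lambda|^{-1}$ for the uniformly integrable $\bigl|\log\sqrt{(r-r_0)^2+(\Im\lambda)^2}\bigr|$, and this is available only because $G$ differentiates without loss; verifying that for the $L^*$-factor — through the commutator identity above — is the single point that needs attention, though it is routine.
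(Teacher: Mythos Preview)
Your proof is correct and follows essentially the same route as the paper's: pass to the Fourier side via Plancherel, use the restriction hypotheses together with compact support of $f\delta_\Gamma$ to bound both the spherical $L^2$ norms and their radial derivatives, split the $r$-integral at $|r-|\lambda||=1$ (the far piece producing the $\log\langle\lambda\rangle$), and handle the near piece by integrating by parts against $\log(|\xi|-\lambda)$. The one small slip is the claim that $\int_{|r-r_0|\le1}\bigl|\log\sqrt{(r-r_0)^2+(\Im\lambda)^2}\bigr|\,dr$ is bounded \emph{uniformly} in $\Im\lambda$: this is true for $\Im\lambda\le1$ but picks up $\log\langle\Im\lambda\rangle$ for large $\Im\lambda$; the paper deals with this by treating $\Im\lambda\ge1$ as a separate trivial case (the near-region denominator is then $\ge|\lambda|$, no IBP needed), and in any event the extra logarithm is already present in the stated bound.
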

\begin{proof}
We follow \cite{GS} to prove the lemma. First, observe that due to the compact support of $f\delta_{\Gamma}$, \eqref{eqn:fourierRestrictionEstimate1} and \eqref{eqn:fourierRestrictionEstimate2}  imply that for $\Gamma\Subset \re^d$,
\begin{align}
\label{eqn:fourierRestrictionEstimate3}
\int\left|\nabla_{\xi}\,\widehat{L^* f\delta_\Gamma}(\xi)\right|^2\delta(|\xi|-r)&\leq C\,\la r\ra^{2\alpha}\|f\|_{L^2(\Gamma)}^2\,,\\
\label{eqn:fourierRestrictionEstimate4}
\int\left|\nabla_{\xi}\,\widehat{f\delta_\Gamma}(\xi)\right|^2\delta(|\xi|-r)&\leq C\,\la r\ra^{2\beta}\|f\|_{L^2(\Gamma)}^2\,.
\end{align}

Now, $g\delta_{\Gamma_2}\in H^{-\frac 12-\e}(\re^d)$, and 
\begin{equation}
\label{eqn:smoothLeft}
R_\lambda^+(\psi(\lambda^{-1}|D|)L^*(f\delta_{\Gamma_1}))\in C^\infty(\re^d),\quad R_\lambda^+(\psi(\lambda^{-1}|D|))f\delta_{\Gamma_1})\in C^\infty(\re^d).
\end{equation}

We only consider $|\lambda|\geq c>0$ to avoid considering the low frequency divergence in low dimensions. However, this can be handled as in \cite{GS}.

By Plancherel's theorem,
\begin{equation*}
\Qa(f,g)=\int \psi(\lambda^{-1}|\xi|)\frac{\widehat{L^* f\delta_{\Gamma_1}}(\xi)\,\overline{\widehat{g\delta_{\Gamma_2}}(\xi)}}{|\xi|^2-\lambda^2},\quad \Qs(f,g)=\int \psi(\lambda^{-1}|\xi|)\frac{\widehat{f\delta_{\Gamma_1}}(\xi)\overline{\widehat{g\delta_{\Gamma_2}}}(\xi)}{|\xi|^2-\lambda^2}.
\end{equation*}


\noindent Thus, to prove the lemma, we only need estimate 
\begin{equation}
\label{eqn:restrictedDualPlancherel}
\int \psi(\lambda^{-1}|\xi|)\frac{F(\xi)\,G(\xi)}{|\xi|^2-\lambda^2}
\end{equation}
where by \eqref{eqn:fourierRestrictionEstimate1}, \eqref{eqn:fourierRestrictionEstimate2}, \eqref{eqn:fourierRestrictionEstimate3}, and \eqref{eqn:fourierRestrictionEstimate4}
$$\|F\|_{L^2(S_r^{d-1})}+\|\nabla_{\xi}F\|_{L^2(S_r^{d-1})}\leq C\la r\ra^{\delta_1}\|f\|_{L^2(\Gamma)},\quad \|G\|_{L^2(S_r^{d-1})}+\|\nabla_{\xi}G\|_{L^2(S_r^{d-1})}\leq C\la r\ra ^{\delta_2}\|g\|_{L^2(\Gamma)}.$$

Consider first the integral in \eqref{eqn:restrictedDualPlancherel} over $\bigl||\xi|-|\lambda|\bigr|\ge 1$. Since $\bigl||\xi|^2-\lambda^2\bigr|\ge \bigl||\xi|^2-|\lambda|^2\bigr|$, by the Schwartz inequality, \eqref{eqn:fourierRestrictionEstimate1}, and \eqref{eqn:fourierRestrictionEstimate2}
this piece of the integral is bounded by
\begin{align}
\int_{\left||\xi|-|\lambda|\right|\geq 1} \left|\psi(\lambda^{-1}|\xi|)\frac{F(\xi)\,G(\xi)}{|\xi|^2-\lambda^2}\right|&\leq \int_{M\lambda \geq \left|r-|\lambda|\right|\geq 1}\frac{1}{r^2-|\lambda|^2}\int_{S_r^{d-1}}F(r\theta)\,G(r\theta)dS(\theta) dr\nonumber\\
&\leq C\|f\|_{L^2(\Gamma)}\|g\|_{L^2(\Gamma)}
\int_{M|\lambda|\ge |r-|\lambda||\ge 1}
\la r\ra^{\delta_1+\delta_1}\,\bigl|\,r^2-|\lambda|^2\,\bigr|^{-1}dr\nonumber\\
&\leq C\|f\|_{L^2(\Gamma)}\|g\|_{L^2(\Gamma)}\lambda^{\delta_1+\delta_2-1}\int_{M|\lambda|\geq \left|r-|\lambda|\right|\geq 1}\left|r-|\lambda|\right|^{-1}dr\nonumber\\
&\leq C\,|\lambda|^{\delta_1+\delta_2-1}\log |\lambda|\,\|f\|_{L^2(\Gamma)}\|g\|_{L^2(\Gamma)}.\label{eqn:logLoss}
\end{align}

\noindent{\bf Remark:} The estimate \eqref{eqn:logLoss} is the only term where the $\log$ appears. 

Next, if $\Im\lambda\ge 1$, then $\bigl||\xi|^2-\lambda^2\bigr|\ge |\lambda|$, and by 
\eqref{eqn:fourierRestrictionEstimate1}, \eqref{eqn:fourierRestrictionEstimate2} 
$$
\Biggl|\;\int_{||\xi|-|\lambda||\le 1}
\frac{F(\xi)\,G(\xi)}{|\xi|^2-\lambda^2}\,d\xi\;\Biggr|\le C\,|\lambda|^{\delta_1+\delta_2-1}\,\|f\|_{L^2(\Gamma)}\|g\|_{L^2(\Gamma)}.
$$
Thus, we may restrict our attention to $0\le\Im \lambda\leq 1$ and $\bigl||\xi|-|\lambda|\bigr|\le 1$. 

We consider $\Re\lambda\ge 0$, the other case following similarly, and write
$$
\frac{1}{|\xi|^2-\lambda^2}=\frac{1}{|\xi|+\lambda}\;\frac{\xi}{|\xi|}\cdot\nabla_\xi\log(|\xi|-\lambda)\,,
$$
where the logarithm is well defined since $\Im(|\xi|-\lambda)<0$. Let $\chi(r)=1$ for $|r|\le 1$ and vanish for $|r|\ge \frac 32$. We then use integration by parts, together with
\eqref{eqn:fourierRestrictionEstimate1}, \eqref{eqn:fourierRestrictionEstimate2}, \eqref{eqn:fourierRestrictionEstimate3}, and \eqref{eqn:fourierRestrictionEstimate4} to bound
\begin{equation*}
\Biggl|\;\int\chi(|\xi|-|\lambda|)\,\frac{1}{|\xi|+\lambda}\,
F(\xi)\,G(\xi)\,\;\frac{\xi}{|\xi|}\cdot\nabla_\xi\log(|\xi|-\lambda)\,d\xi\;\Biggr|
\le C\,|\lambda|^{\delta_1+\delta_2-1}\,\|f\|_{L^2(\Gamma)}\|g\|_{L^2(\Gamma)}.
\end{equation*}
Now, taking $\delta_1=\delta_2=\alpha$ gives \eqref{eqn:lowFreqSingle}, and taking $\delta_1=\alpha$ and $\delta_2=\beta$ gives \eqref{eqn:lowFreqDouble}.
\end{proof}

We now prove the estimates \eqref{eqn:fourierRestrictionEstimate1} and \eqref{eqn:fourierRestrictionEstimate2}. 
\begin{lemma}
Let $\Gamma\Subset \re^d$ be a compact $C^\infty$ embedded hypersurface. For $L=\partial_{\nu}$ on $\Gamma$, estimate \eqref{eqn:fourierRestrictionEstimate1} holds with $\alpha=1$. Estimate \eqref{eqn:fourierRestrictionEstimate2} holds with $\beta=1/4$. Moreover, if $\Gamma$ is curved, then \eqref{eqn:fourierRestrictionEstimate2} holds with $\beta =1/6.$
\end{lemma}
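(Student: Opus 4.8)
The plan is to prove the three Fourier restriction estimates by reducing them to known $L^2$ restriction bounds for the spectral measure / extension operator of the Laplacian on $\R^d$, applied to a single hypersurface $\Gamma$. First I would recognize that the quantity $\int |\widehat{f\delta_\Gamma}|^2(\xi)\,\delta(|\xi|-r)\,d\xi$ is, up to the Jacobian factor $r^{d-1}$ separating surface measure on $S_r^{d-1}$ from the measure $\delta(|\xi|-r)\,d\xi$, exactly $\|\mathcal{R}_r(f\delta_\Gamma)\|_{L^2(S_r^{d-1})}^2$ where $\mathcal{R}_r$ restricts the Fourier transform to the sphere of radius $r$. Rescaling $S_r^{d-1}$ to the unit sphere and setting $h = r^{-1}$, this becomes a semiclassical restriction-of-measure estimate: we are asking how much mass $\widehat{f\,d\sigma_\Gamma}$ can place on a sphere at frequency $r$. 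Equivalently, dualizing, the adjoint is the Fourier extension operator from $S_r^{d-1}$ restricted to $\Gamma$, i.e. restriction of (approximate) eigenfunctions of frequency $r$ to the hypersurface $\Gamma$. This is precisely the setting of the Burq--Gérard--Tzvetkov interior restriction bounds that are already invoked in the body of the paper: restriction of an $L^2$-normalized frequency-$r$ eigenfunction to a hypersurface costs $r^{1/4}$ in general and $r^{1/6}$ when the hypersurface is curved (has nonvanishing curvature relative to the sphere, which for the Euclidean sphere just means $\Gamma$ is curved). This gives \eqref{eqn:fourierRestrictionEstimate2} with $\beta = 1/4$ in general and $\beta = 1/6$ for curved $\Gamma$, with the $\la r\ra$ rather than $r$ accounting for the bounded-frequency regime handled trivially.

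For \eqref{eqn:fourierRestrictionEstimate1}, the point is that $L^* = -L - \operatorname{div}L$ is a first-order differential operator, so $\widehat{L^*(f\delta_\Gamma)}(\xi)$ is bounded by $\la \xi\ra$ times a sum of terms of the form $\widehat{g\,\delta_\Gamma}(\xi)$ with $g$ built from $f$ and smooth coefficients (the tangential derivatives falling on $f$, the coefficient of $L$ along $\nu$, etc.), plus genuinely the derivative-of-$\delta_\Gamma$ contribution. Actually the cleaner route: write $L = a(x)\,\partial_\nu + $ (tangential), integrate by parts on $\Gamma$ to move tangential derivatives off $f$, and observe that the normal-derivative piece produces, after Fourier transform, a factor $i\,\xi\cdot\nu$ times $\widehat{f\,\delta_\Gamma}$ up to lower order. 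On the sphere $|\xi| = r$ this factor is $O(r)$, so one gains exactly one power: $\|\widehat{L^*(f\delta_\Gamma)}\|_{L^2(S_r^{d-1})} \lesssim \la r\ra \cdot \la r\ra^{1/4}\|f\|$? No --- the decisive gain is that the normal component $\xi\cdot\nu/|\xi|$ is $O(1)$ and the full factor $\xi\cdot\nu$ is genuinely $O(r)$, but one should check whether the relevant bound for $L^* f \delta_\Gamma$ is better than naively multiplying the $\beta=1/4$ bound by $r$. The claim $\alpha = 1$ means exactly $\|\widehat{L^*(f\delta_\Gamma)}\|_{L^2(S_r^{d-1})} \lesssim \la r\ra \|f\|_{L^2(\Gamma)}$, with no extra $r^{1/4}$. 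This is plausible because differentiating across the surface (the conormal derivative) actually makes the distribution \emph{more} like a standard wave packet transversal to $\Gamma$: the symbol $\xi\cdot\nu$ vanishes exactly in the tangential directions that caused the $r^{1/4}$ loss in the first place, so the worst directions for \eqref{eqn:fourierRestrictionEstimate2} are suppressed. I would make this precise by decomposing $\widehat{L^*(f\delta_\Gamma)}$ on $S_r^{d-1}$ into a piece near the ``glancing'' set where $\xi$ is nearly tangent to $\Gamma$ (there the symbol $\xi\cdot\nu$ is small, killing the loss) and the transversal piece (there a direct $TT^*$ or stationary-phase argument gives the clean $O(r)$ bound, since for transversal directions the surface measure $\delta_\Gamma$ restricted to a sphere behaves like an ordinary measure).

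The main obstacle I expect is the careful bookkeeping in \eqref{eqn:fourierRestrictionEstimate1}: controlling the interplay between the order-one derivative $L^*$ and the glancing directions where the BGT-type loss lives, and verifying that the vanishing of the conormal symbol $\xi\cdot\nu$ there exactly compensates. Concretely, one wants something like $|\xi\cdot\nu|^2 \cdot (\text{density of glancing directions at scale } \delta) \lesssim r^2$ uniformly, which requires quantifying how $\xi\cdot\nu$ decays as $\xi$ approaches the tangent plane and matching it against the measure of the corresponding frequency cap. A secondary technical point is keeping the constants uniform as one passes between the surface measure $dS$ on $S_r^{d-1}$ and the distribution $\delta(|\xi|-r)d\xi$ (a factor $r^{d-1}$ that must be tracked, but is harmless since all estimates are stated with explicit powers of $\la r\ra$), and handling the bounded-frequency range $r \lesssim 1$ separately and trivially. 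Once \eqref{eqn:fourierRestrictionEstimate1} and \eqref{eqn:fourierRestrictionEstimate2} are in hand with $\alpha = 1$, $\beta \in \{1/4, 1/6\}$, Lemma \ref{lem:Q} converts them directly into the operator bounds of Theorem \ref{thm:main} for the low-frequency-cutoff pieces, and the remaining high-frequency parts of $\Sl$, $\Dl$ are handled by the standard parametrix construction for the free resolvent away from the diagonal, as in \cite{GS}.
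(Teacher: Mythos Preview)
Your treatment of \eqref{eqn:fourierRestrictionEstimate2} matches the paper's exactly: dualize to reduce the estimate to restriction of the extension $T_r\phi(x)=\int_{S_r^{d-1}}e^{i\langle x,\xi\rangle}\phi(\xi)\,dS(\xi)$ to $\Gamma$, observe that a compact cutoff of $T_r\phi$ is a quasimode of frequency $r$, and invoke the Burq--G\'erard--Tzvetkov and Hassell--Tacy restriction bounds for $\beta=1/4$ and $\beta=1/6$.

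For \eqref{eqn:fourierRestrictionEstimate1} the paper takes a shorter route than your glancing/transversal decomposition. After the \emph{same} dualization (now pairing against $L^*(f\delta_\Gamma)$ rather than $f\delta_\Gamma$), the bound needed is precisely
\[
\|\partial_\nu(\chi T_r\phi)\|_{L^2(\Gamma)}\lesssim r\,\|\chi T_r\phi\|_{L^2(\R^d)},
\]
which is the Neumann-data restriction estimate for quasimodes already established by Tacy \cite{T14} (and also obtainable from Tataru \cite{Ta}); the paper simply cites it. Your proposed decomposition---splitting $S_r^{d-1}$ into a glancing region where the conormal symbol $\xi\cdot\nu$ is small (killing the $r^{1/4}$ loss) and a transversal region handled by direct $TT^*$ or stationary phase---is essentially the mechanism underlying the proof of that Neumann restriction theorem. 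So your plan is sound but amounts to reproving a result already in the literature, and incurs exactly the bookkeeping you correctly flagged as the main obstacle. The paper's approach buys brevity; yours would buy self-containment.

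One minor correction: the heuristic $\widehat{L^*(f\delta_\Gamma)}(\xi)\approx i(\xi\cdot\nu)\,\widehat{f\delta_\Gamma}(\xi)$ is not literally a pointwise multiplication in $\xi$, since $\nu$ varies along $\Gamma$. The paper's dualization sidesteps this entirely by moving $L$ onto the quasimode $T_r\phi$, where it becomes an honest normal derivative of a smooth function.
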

\begin{proof}
Let $A:H^s(\re^d)\to H^{s-1}(\re^d)$. To estimate
$$\int |\widehat{A^*(f\delta_{\Gamma})}(\xi)|^2\delta(|\xi|-r),$$ 
write 
$$
\la \widehat{A^*(f\delta_{\Gamma})}(\xi)\delta(|\xi|-r),\phi(\xi)\ra =\int\int A^*(f(x)\delta_{\Gamma})\delta(|\xi|-r)\overline{\phi(\xi)e^{i\la x,\xi\ra}} dxd\xi =\int_{\Gamma}f AT_r\phi dx$$
where 
\begin{equation}
\label{eqn:T}
T_r\phi=\int \delta(|\xi|-r)\phi(\xi)e^{i\la x,\xi\ra }d\xi.
\end{equation}

For $\chi\in C_0^\infty(\re^d)$, $\chi T\phi$ is a quasimode of the Laplacian with eigenvalue $\lambda=r$ in the sense of \cite{T}. Thus, we can use the restriction bounds for eigenfunctions found in \cite{BGT}, \cite{HTacy}, \cite{T14}, and \cite{T}, to obtain estimates on $T\phi$. 

To prove \eqref{eqn:fourierRestrictionEstimate2}, let $A=I$. Then, by \cite[Theorem 3]{BGT}, \cite[Theorem 1.7]{T}
\begin{equation}
\label{eqn:restrictEig}
\|\chi T_r\phi\|_{L^2(\Gamma)}\leq r^{\frac 14}\|\chi T\phi\|_{L^2(\re^d)},
\end{equation}
and if $\Gamma$ is curved, then by \cite[Theorem 1.3]{HTacy}
\begin{equation}
\label{eqn:restrictEigCurved}
\|\chi T_r\phi\|_{L^2(\Gamma)}\leq r^{\frac 16}\|\chi T\phi\|_{L^2(\re^d)}.
\end{equation}

\noindent{\bf Remark:} Estimates \eqref{eqn:restrictEig} and \eqref{eqn:restrictEigCurved} continue to hold on $C^{1,1}$ and $C^{2,1}$ curved hypersurfaces respectively (\cite{Blair} \cite{GS}).

Next, we take $A=L$ to obtain \eqref{eqn:fourierRestrictionEstimate1}. Observe that 
$$\chi LT_r\phi =L\chi T_r\phi +[\chi,L]T_r\phi$$
with $[\chi,L]\in C_0^\infty(\re^d)$. Therefore, $[\chi,L]T_r\phi$ is a quasimode of the Laplacian with eigenvalue $r$. 

Hence, using the fact that $L=\partial_{\nu}$ on $\Gamma$ together with \cite[Theorem 0.3]{T14}, we can estimate $LT\phi$.
\begin{equation}
\label{eqn:restrictEigNormal}\|\chi LT_r\phi\|_{L^2(\Gamma)}\leq  \|L\chi T_r\phi\|_{L^2(\Gamma)}+\|[L,\chi ]T_r\phi\|_{L^2(\Gamma)}\leq Cr\|\chi T_r\phi \|_{L^2(\re^d)}.
\end{equation}

To complete the proof of the Lemma, we estimate $\|\chi T\phi\|_{L^2(\re^d)}.$ We have that 
$$\|\chi T_r\phi\|_{L^2(\re^d)}=\|\hat{\chi}*g\delta(|\xi|-r)\|_{L^2(\re^d)}.$$ Therefore, 
\begin{align*}
\|\hat{\chi}*g\delta(|\xi|-r)\|_{L^2(\re^d)}^2&=\int \left|\int_{S_r^{d-1}}\hat{\chi}(\xi-\eta)g(\eta)d\eta\right|^2d\xi\\
&\leq \|g\|_{L^2(S_r^{d-1})}^2\int \int_{S_r^{d-1}} |\hat{\chi}(\xi-\eta)|^2d\eta d\xi\\
&\leq \|g\|_{L^2(S_r^{d-1})}^2 \int \int _{S_r^{d-1}}C_N\la |\xi|-r\ra^{-N}d\eta d\xi \leq C \|g\|_{L^2(S_r^{d-1})}^2.
\end{align*}

Combining this with \eqref{eqn:restrictEig}, \eqref{eqn:restrictEigCurved} and \eqref{eqn:restrictEigNormal} completes the proof of the Lemma.
\end{proof}

To complete the proof of Theorem \ref{thm:main} we need an estimate on the high frequency component of $\Sl$ and $\Dl$. 
Let $\gamma^{\pm}:H^s(\Omega^{\pm})\to H^{s-1/2}(\partial\Omega)$, $s>1/2$ denote the restriction map where $\Omega^+=\Omega$ and $\Omega^-=\re^d\setminus\overline{\Omega}$. Then we have
\begin{lemma}
\label{lem:outsideSphere}
Let $M>1$ and $\psi\in C_0^\infty(\re)$ with $\psi\equiv 1$ for $|\xi|<M$. Suppose that $\partial\Omega$ is a finite union of compact embedded $C^\infty$ hypersurfaces. Then
\begin{align}
\gamma R_\lambda^+(1-\psi(\lambda^{-1}|D|))\gamma^*&=O_{L^2(\partial\Omega)\to L^2(\partial\Omega)}(\la \lambda\ra^{-1}),\label{eqn:estHighSingle}\\
\gamma^{\pm} R_\lambda^+(1-\psi(\lambda^{-1}|D|))L^*\gamma^*&=O_{L^2(\partial\Omega)\to L^2(\partial\Omega)}(1).\label{eqn:estHighDouble}
\end{align}
\end{lemma}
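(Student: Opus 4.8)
The plan is to exploit the fact that on the support of $1-\psi(\lambda^{-1}|D|)$ the symbol $|\xi|^2-\lambda^2$ is elliptic, so that the cutoff resolvent $R_\lambda^+(1-\psi(\lambda^{-1}|D|))$ behaves like a (semiclassical) pseudodifferential operator of order $-2$, and to combine this with the mapping properties of the trace operator $\gamma$ on Sobolev spaces. Concretely, set $h=\lambda^{-1}$ and write $R_\lambda^+(1-\psi(\lambda^{-1}|D|))$ as a Fourier multiplier with symbol
\[
m_\lambda(\xi)=\frac{1-\psi(\lambda^{-1}|\xi|)}{|\xi|^2-\lambda^2}.
\]
On the support of $1-\psi(\lambda^{-1}|\xi|)$ we have $|\xi|\ge M\lambda$, hence $||\xi|^2-\lambda^2|\ge (1-M^{-2})|\xi|^2$ and, for every multi-index $\alpha$, $|\partial_\xi^\alpha m_\lambda(\xi)|\le C_\alpha\la\xi\ra^{-2-|\alpha|}$ uniformly in $\lambda$ (the derivatives falling on $\psi(\lambda^{-1}|\xi|)$ produce powers of $\lambda^{-1}\le\la\xi\ra^{-1}$ on the support). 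Thus $m_\lambda(D)$ is bounded $H^s(\re^d)\to H^{s+2}(\re^d)$ uniformly in $\lambda$; moreover on the high-frequency piece one gains genuine powers of $\lambda$: $\|m_\lambda(D)u\|_{H^{s+2}}\le C\lambda^{-1}\|u\|_{H^s}$, say, since $||\xi|^2-\lambda^2|\ge c\lambda|\xi|$ there.

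For \eqref{eqn:estHighSingle}: the adjoint trace $\gamma^*:L^2(\partial\Omega)\to H^{-1/2-\e}(\re^d)$ is bounded (indeed $\gamma^*$ maps into $H^{-1/2-\e}$ with compact support), $m_\lambda(D)$ maps $H^{-1/2-\e}\to H^{3/2-\e}$ with a gain of $\lambda^{-1}$, and then $\gamma:H^{3/2-\e}(\re^d)\to H^{1-\e}(\partial\Omega)\hookrightarrow L^2(\partial\Omega)$ is bounded. Composing the three bounds gives $\gamma R_\lambda^+(1-\psi(\lambda^{-1}|D|))\gamma^*=O_{L^2\to L^2}(\lambda^{-1})$, which is \eqref{eqn:estHighSingle}. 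For \eqref{eqn:estHighDouble} one inserts $L^*=-L-\operatorname{div}L$, a first-order differential operator with smooth coefficients: $L^*\gamma^*:L^2(\partial\Omega)\to H^{-3/2-\e}(\re^d)$ is bounded, $m_\lambda(D)$ recovers two derivatives (now we only need the uniform $O(1)$ bound, not the gain), landing in $H^{1/2-\e}(\re^d)$, and $\gamma^\pm:H^{1/2-\e}(\re^d)\to H^{-\e}(\partial\Omega)$ — wait, this is below $L^2$, so one must be slightly more careful: instead factor as $m_\lambda(D)L^*=\tilde L\, m_\lambda(D)+[\text{lower order}]$, moving the vector field to the left of the multiplier at the cost of a commutator that is again an order $-2$ operator composed with a bounded multiplier, so that one genuinely maps $L^2(\partial\Omega)\to H^{-1/2-\e}\to H^{3/2-\e}$, applies $\tilde L$ to land in $H^{1/2-\e}$, and restricts to $H^{\e}(\partial\Omega)\hookrightarrow L^2(\partial\Omega)$ — but one should use the one-sided traces $\gamma^\pm$ since $R_\lambda^+ L^*\gamma^*$ has a jump across $\partial\Omega$, which is precisely why $\gamma^{\pm}$ (not $\gamma$) appears in the statement. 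Keeping careful track of which side of $\partial\Omega$ one restricts to, the composition is bounded $L^2(\partial\Omega)\to L^2(\partial\Omega)$ with a constant independent of $\lambda$, giving \eqref{eqn:estHighDouble}.

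The main obstacle, and the point requiring the most care, is the borderline Sobolev bookkeeping for the double layer estimate: the naive composition $\gamma^\pm\circ m_\lambda(D)\circ L^*\circ\gamma^*$ formally asks to restrict a function that is only in $H^{1/2-\e}$, which is exactly at (in fact just below) the threshold $s>1/2$ needed for the trace theorem. The resolution is that $R_\lambda^+(1-\psi(\lambda^{-1}|D|))L^*$ should not be viewed as "multiplier then differential operator" but rather the differential operator must be commuted through, or equivalently one observes that $L^* (f\delta_{\partial\Omega})$, while of order $-3/2-\e$ globally, has its singularity conormal to $\partial\Omega$, so that $R_\lambda^+(1-\psi)$ applied to it is conormal of order $1/2-\e$ and hence has well-defined two-sided traces (with a jump of size $\|f\|_{L^2}$, the $-\tfrac12 f(x)$ term already having been split off before the lemma). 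This conormal/jump-relation structure — standard in classical layer-potential theory — is what makes the one-sided restrictions $\gamma^\pm$ bounded on $L^2$, and it is where I would be most careful to state the regularity claims precisely rather than by a crude Sobolev count.
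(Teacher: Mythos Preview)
Your overall strategy---recognising $R_\lambda^+(1-\psi(\lambda^{-1}|D|))$ as a pseudodifferential operator of order $-2$ and composing with trace bounds---matches the paper. However, two points deserve correction or comparison.

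\textbf{A bookkeeping error in \eqref{eqn:estHighSingle}.} Your claim that $\|m_\lambda(D)u\|_{H^{s+2}}\le C\lambda^{-1}\|u\|_{H^s}$ is false: from $||\xi|^2-\lambda^2|\ge c\lambda|\xi|$ you get $\langle\xi\rangle^2|m_\lambda(\xi)|\lesssim |\xi|/\lambda$, which is unbounded. In classical Sobolev spaces you may gain two derivatives with $O(1)$ norm, or one derivative with $O(\lambda^{-1})$ norm, but not both; the latter lands you in $H^{1/2-\e}$, below the trace threshold. The paper avoids this by working in \emph{semiclassical} Sobolev spaces: $R_{h^{-1}}^+(1-\psi(|hD|))\in h^2\Psi^{-2}$ and $\gamma=O_{H_h^s\to H_h^{s-1/2}}(h^{-1/2})$, so the composition is $O(h^{-1/2}\cdot h^2\cdot h^{-1/2})=O(h)$. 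The $h^{-1/2}$ loss on each trace is exactly what your classical bookkeeping misses.

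\textbf{A genuinely different route for \eqref{eqn:estHighDouble}.} You correctly identify the borderline Sobolev obstruction, but your commutation fix does not resolve it: after $m_\lambda(D)L^*=\tilde L\,m_\lambda(D)+[\text{order }-2]$ you still end up in $H^{1/2-\e}$ before restricting, and your claim ``restricts to $H^{\e}(\partial\Omega)$'' is not justified by the trace theorem. Your fallback to conormal regularity and jump relations is correct in spirit but left vague. The paper instead compares with the zero-energy resolvent via the identity
\[
A_{h^{-1}}:=(R_{h^{-1}}^+-R_0^+)(1-\psi(|hD|))=h^{-2}R_{h^{-1}}^+R_0^+(1-\psi(|hD|))\in h^2\Psi^{-4}.
\]
The extra two orders of smoothing make the Sobolev count for $\gamma A_{h^{-1}}L^*\gamma^*$ work with room to spare (handled via the adjoint, so the vector field $L$ is applied to a genuinely smooth function). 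What remains is $\gamma^\pm R_0^+(1-\psi(|hD|))L^*\gamma^*$, a $\lambda$-independent operator for which boundedness follows from classical layer-potential theory. This cleanly isolates the single appeal to jump relations at $\lambda=0$, rather than invoking conormal structure at every $\lambda$.
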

\begin{proof}
Let $h^{-1}=\lambda$. Then $R_{h^{-1}}^+(1-\psi(hD))\in h^2\Psi^{-2}$ (see, for example, \cite[Theorem 3]{Galk}) where $\Psi^k$ denotes the class of semiclassical pseudodifferential operators of order $k$ (see \cite{Z} for a detailed account of the theory of semiclassical analysis).

Now, let $\Gamma_1,\, \Gamma_2\Subset \re^d$ be embedded $C^\infty$ hypersurfaces and denote by $\gamma_i:H^s(\re^d)\to H^{s-1/2}(\Gamma_i)$, $s>1/2$ and by $\gamma_i^*$ its adjoint. Then 
\begin{equation}\label{eqn:restrictBound}\gamma_i=O_{H^s(\re^d)\to H^{s-1/2}(\Gamma_i)}(h^{-1/2}).
\end{equation}
Hence, we have 
$$\gamma_iR^+_{h^{-1}}(1-\psi(|hD|))\gamma_j^*=O_{L^2\to L^2}(h).$$
Since $\gamma=\sum_i\gamma_i$, we have proven estimate \eqref{eqn:estHighSingle}

The strategy for obtaining the bound \eqref{eqn:estHighDouble} will be to compare $\Dl$ at high frequency with the corresponding operators for $\lambda=0$.  First, observe that $R_0^+(1-\psi(|hD|)\in h^2\Psi^{-2}$.

We consider 
$$A_{h^{-1}}:=(R_{h^{-1}}^+-R_0^+)(1-\psi(|hD|))=h^{-2}R_{h^{-1}}^+R_0^+(1-\psi(|hD|)).$$ 
Hence, $A_{h^{-1}}\in h^2\Psi^{-4}$. We will bound 
$$B_h:=\gamma_iA_{h^{-1}}(1-\psi(|hD|))L^*\gamma_j^*.$$
To do so, consider the adjoint
$$B_h^*=\gamma_jL(1-\psi(|hD|))A_{h^{-1}}\gamma_i^*.$$
Then, observe that since $\Gamma_j$ is smooth, we may extend $L$ off of $\Gamma_j$ to a smooth vector field, $\tilde{L}$, without changing $B_h^*.$ Hence, using \eqref{eqn:restrictBound}, and the fact that $\tilde{L}=O_{H^s\to H^{s-1}}(h^{-1})$, we have that $B_h=O_{L^2\to L^2}(1).$

Now, by \cite[Section 5]{SMT}
$$\gamma^{\pm}R_0^+(1-\psi(|hD|))L^*\gamma: L^2(\partial\Omega)\to L^2(\partial\Omega)$$
for $\partial\Omega$ a finite union of compact embedded $C^\infty$ hypersurfaces. Hence, 
$$\gamma^{\pm}R_{h^{-1}}^+(1-\psi(|hD|))L^*\gamma=\gamma^{\pm}R_0^+(1-\psi(|hD|))L^*\gamma +\gamma A_{h^{-1}}L^*\gamma^*=O_{L^2(\partial\Omega)\to L^2(\partial\Omega)}(1)$$
and we have proven \eqref{eqn:estHighDouble}
\end{proof}

\noindent Taking $\partial\Omega=\bigcup_{i}\Gamma_i$ and applying Lemmas \ref{lem:Q} and Lemma \ref{lem:outsideSphere} finishes the proof of Theorem \ref{thm:main}.

\subsection{Sharpness of the single layer operator estimates}\label{sec:sharpSLO}
We now show that the estimates on $\Sl$ in Theorem \ref{thm:main} are sharp modulo the log losses. We use a different approach from that in \cite[Theorem 4.2]{C-WGLL} where the authors construct examples giving the same lower bounds. These examples rely on the concentration of semiclassical singularities in small neighborhoods of glancing rays.

First, recall as above that the spectral measure operator is denoted by $dE_\lambda$ (see \eqref{eq:Stone}).
Then,
\begin{equation}
\label{eqn:specSingleLayer}\gamma dE_{\lambda}\gamma^*=\frac{\Sl-\mathcal{S}_{\lambda}^-}{2\pi i}.
\end{equation}

\noindent But, $ dE_{\lambda}$ has kernel 
$$\frac{1}{2\lambda}(2\pi)^{-d}\int_{S^{d-1}_\lambda}e^{i \la x-y,\xi\ra}d\xi. $$  

Thus, 
$$\gamma dE_{\lambda}\gamma^* = C_d\lambda^{-1}\gamma T_{\lambda} T_{\lambda}^*\gamma^*$$ where $T_\lambda$ is the operator in \eqref{eqn:T}. By \cite{BGT}, \cite{HTacy}, the estimates \eqref{eqn:restrictEig} and \eqref{eqn:restrictEigCurved} are sharp and hence for $\lambda\geq \lambda_0$,
$$\|\gamma dE_{\lambda} \gamma^*\|_{L^2(\Gamma)\to L^2(\Gamma)}\geq \begin{cases}C\lambda^{-\frac 12}&\Gamma \text{ general}\\
C \lambda^{-\frac 23} &\Gamma \text{ curved}\end{cases}.$$

Putting this together with \eqref{eqn:specSingleLayer} gives that 
$$\|\Sl\|_{L^2(\Gamma)\to L^2(\Gamma)}\geq \begin{cases}C\lambda^{-\frac 12}&\Gamma \text{ general}\\
C \lambda^{-\frac 23} &\Gamma \text{ curved}\end{cases}$$
as desired.

\subsection{Sharpness of the double layer operator estimates}\label{sec:sharpDLO}

We will show that there exist smooth embedded hypersurfaces $\Gamma$ such that for $\lambda\geq \lambda_0$,
\begin{equation}
\label{eqn:sharpDLO}
\|\Dl\|_{L^2(\Gamma)\to L^2(\Gamma)}\geq\begin{cases}C \lambda^{1/4}&\Gamma \text{ general}\\
C \lambda^{1/6}&\Gamma \text{ curved}\end{cases}.
\end{equation}
In the flat case, our examples are adaptations of those given in \cite[Theorems 4.6]{C-WGLL} to higher dimensions. However, in the curved case, the example we provide is more subtle and improves the lower bound $\|\Dl\|_{L^2(\Gamma)}\geq C\lambda^{1/8}$ given in \cite[Theorem 4.7]{C-WGLL} 

The idea will be to use a family functions which is microlocalized at a point $((x',0),\xi')\in T^*\Gamma$ such that  $|\xi'|<1$ and the geodesic
$$\{(x',0)+t(\xi',\sqrt{1-|\xi'|^2})\,:\,t\in \re\}$$ 
is tangent to $\Gamma$ at some point away from $(x',0).$

\subsubsection{Flat case}

Let 
$$\Gamma_1:=\{(x_1,x_2,x')\in \re^d\,:\,1/2 <x_1<3/2\,, x_2=0\,,|x'|<1\}$$
$$ \Gamma_2:=\{(x_1,x_2,x')\in \re^d\,:\,x_1=0\,, x_2^2+|x'|^2<1\}.$$
Let $\chi \in C_c^\infty (\re^{d-1})$ have $\chi \geq 0$, $\|\chi\|_{L^2}=1$, and $\hat{\chi}(0)\geq 1/2$. That is
$$\int \chi(x_2,x')dx_2dx'\geq 1/2.$$
Then, denote by $\chi_\lambda:=\chi(M\lambda^{\gamma}(x_2,x'))$ and observe that 
$$\|\chi_{\lambda}\|_{L^2}= C_M\lambda^{-(d-1)\gamma/2},\quad \int\chi_{\lambda}dx_2dx'\geq C_M \lambda^{-(d-1)\gamma}$$
where $M>0$ will be chosen later and $\gamma\geq1/2.$

Now, let $\Gamma\Subset\re^d$ be a smooth embedded hypersurface such that $\Gamma_1\cup\Gamma_2\subset \Gamma$. Suppose also that $f\in L^2(\Gamma)$ is supported on $\Gamma_2.$ Then,
\begin{equation*}
\Dl f|_{\Gamma_1}=\int_{\Gamma_2}(\partial_{\nu_y}R_\lambda^+(x-y))f(y)dy
\end{equation*}

Now, for $|x-y|>\epsilon$, 
\begin{equation} 
\label{eqn:asympDl}
\partial_{\nu_y}R_{\lambda}^+(x-y)=C_d\lambda^{d-1}\frac{\la x-y,\nu_y\ra}{|x-y|}e^{i\lambda|x-y|}\left(\lambda^{-(d-1)/2}|x-y|^{(d-1)/2}+O((\lambda|x-y|)^{-(d+1)/2})\right).
\end{equation}
We will consider $\chi_\lambda$ as a function in $L^2(\Gamma_2)$. Thus, since for $x\in \Gamma_1$ and $y\in \Gamma_2$, $|x-y|\geq \e$, we consider 
$$\lambda^{(d-1)/2}\int_{\Gamma_2}\frac{e^{i\lambda|x-y|}\la x-y,\nu_y\ra}{|x-y|^{(d+1)/2}}\chi_\lambda(y)dy.$$
We are interested in obtaining lower bounds for the $L^2$ norm on $\Gamma_1$. In particular, let $\psi\in C_0^\infty(\re)$ with $\psi(z)=1$ for $|z|\leq 1$. Then, let $\psi_{\lambda,1}(z)=\psi(M\lambda^{\gamma}|z|)$ and $\psi_{\lambda,2}(z)=\psi(M\lambda^{\gamma_2}|z|).$ 

We estimate
$$u=\psi_{\lambda,2}(x_1-1)\psi_{\lambda,1}(x')\lambda^{(d-1)/2}\int_{\Gamma_2}\frac{e^{i\lambda|x-y|}\la x-y,\nu_y\ra}{|x-y|^{(d+1)/2}}\chi_\lambda(y)dy$$
on $\Gamma_1.$
For  $x\in \Gamma_1\cap \supp \psi_{\lambda,1}(x')\psi_{\lambda,2}(x_1-1)$ and $y\in \supp \chi_\lambda$
\begin{equation}
\label{eqn:normalEst}
\frac{\la x-y,\nu_y\ra}{|x-y|}=1+O(\lambda^{-2\gamma}),\quad |x-y|=x_1(1+O(\lambda^{-2\gamma}))
\end{equation}
Hence, we have 
$$u|_{\Gamma_1}= C_d\psi_{\lambda,2}(x_1-1)\psi_{\lambda,1}(x')\lambda^{(d-1)/2}\frac{e^{i\lambda x_1}}{x_1^{(d-1)/2}}\int (1+O(\la\lambda^{1-2\gamma}\ra M^{-2})+O(\la M^{-2}\lambda ^{-2\gamma}\ra)\chi_\lambda dy$$
and on $|x'|<M^{-1}\lambda^{-\gamma}$,
$$u|_{\Gamma_1}(x_1,x')\geq C\psi_{\lambda,2}(x_1-1)  \lambda ^{(d-1)/2}\int \chi_\lambda dy\geq C \lambda ^{(d-1)/2-(d-1)\gamma }.$$
So, 
$$\|u\|^2_{L^2(\Gamma_1)}\geq C\int_{\Gamma_1\cap |x'|<C\lambda^{-\gamma}}\psi_{\lambda,2}^2(x_1-1)\lambda^{d-1-2(d-1)\gamma}\geq C\lambda^{d-1-(3d-4)\gamma-\gamma_2}.$$
Thus, using elementary estimates on the remainder terms 
$$\|\Dl \chi_\lambda\|\geq C\|u\|\geq C\lambda^{\frac{d-1-(3d-4)\gamma-\gamma_2}{2}}.$$
Hence, 
$$\frac{\|\Dl \chi_\lambda\|}{\|\chi_\lambda\|}\geq C\lambda^{\frac{(d-1)(1-2\gamma)+\gamma-\gamma_2}{2}}.$$
Thus, choosing $\gamma =1/2$, $\gamma_2=0$ and $M$ large enough,
$$\|\Dl\chi_\lambda \|\geq C\lambda^{1/4}\|\chi_\lambda\|$$
as desired.

\subsubsection{Curved case}

In order to obtain the lower bound in the curved case, we will need to arrange to hypersurfaces, $\Gamma_1$ and $\Gamma_2$ parametrized respectively by $\gamma,\,\,\sigma:B(0,\e)\subset \re^{d-1}\to \re^d$ such that 
$$|\gamma(x)-\sigma(y)|=|\gamma(0)-\sigma(0)|+O(|x_{1}-y_{1}|^3)+O(|x'-y'|^2)$$
where $x=(x_1,x')\in \re^{d-1}.$
To do this, let $\tilde{\gamma}:(-\e,\e)\to \re^2$ be a smooth unit speed curve with curvature $\kappa(t)=\|\gamma''(t)\|$ and normal vector $n(t)=\gamma''(t)/\kappa(t)$. We assume $\kappa(0)\neq 0$ and $\kappa'(0)\neq 0.$ Then, let $\tilde{\sigma(t)}$ be the loci of the osculating circle for $\tilde{\gamma}(t)$. 
\begin{figure}[htbp]
\centering
\def\svgwidth{3in}
\input{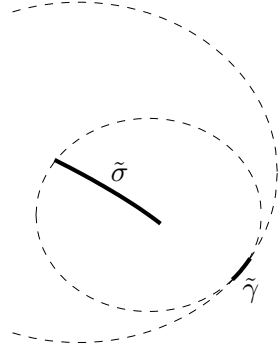}
\caption{We show an example of a curve $\tilde{\gamma}$ and its loci of osculating circles, $\tilde{\sigma}$.}
\label{fig:example}
\end{figure}
That is,
$$\tilde{\sigma}(t)=\tilde{\gamma}(t)+\frac{n(t)}{\kappa(t)}.$$

Finally, define
$$\gamma(x):=(\tilde{\gamma}(x_1)+n(x_1)|x'|^2,x')\,,\quad \quad \sigma(x):=(\tilde{\sigma}(x_1)+\gamma'(x_1)|x'|^2,x').$$
Then we have 
$$|\gamma(y)-\sigma(x)|^2=|\tilde{\gamma}(y_1)-\tilde{\sigma}(x_1)|^2+O(|x'|^2+|y'|^2)+|x'-y'|^2.$$
Let $d(x_1,y_1)=|\tilde{\gamma}(y_1)-\tilde{\sigma}(x_1)|.$
Then, 
$$\partial_{x_1}d(x_1,x_1)=\partial_{x_1}^2d(x_1,x_1)=0.$$
Hence, 
$$d(x_1,y_1)=d(x_1,x_1)+O(|x_1-y_1|^3)$$
and we have near $x=y=0$,
$$|\gamma(y)-\sigma(x)|=|\gamma(0)-\sigma(0)|+O(|x_1-y_1|^3)+O(|x'|^2+|y'|^2).$$
Moreover, 
$$\frac{\la \sigma(x)-\gamma(y),\nu_y\ra}{|\gamma(y)-\sigma(x)|}=1+O(|x-y|).$$
Now, with $\chi\in C_0^\infty(\re^{d-1})$, let $$\chi_\lambda=\chi(M(\lambda^{\gamma_1}x_1,\lambda^{\gamma_2}x')).$$
Then, 
$$\|\chi_\lambda\|_{L^2(\re^{d-1})}=C_M\lambda^{-\frac{d-2}{2}\gamma_2-\frac{1}{2}\gamma_1}\,,\quad \int_{B(0,\e)}\chi_{\lambda}dx_1dx'\geq C_M\lambda^{-(d-2)\gamma_2-\gamma_1}.$$
Next, define $\chi_{\lambda,1}\in L^2(\Gamma_1)$ by $\chi_{\lambda,1}(\gamma(y)):=\chi_\lambda(y)$ and $\chi_{\lambda,2}\in L^2(\Gamma_2)$ by $\chi_{\lambda,2}(\sigma(x)):=\chi_\lambda(x).$

Then 
\begin{equation} \label{eqn:sizeEst}\|\chi_{\lambda,1}\|_{L^2(\Gamma_1)}\,,\, \|\chi_{\lambda,2}\|_{L^2(\Gamma_2)}\geq C\|\chi_{\lambda}\|_{L^2(\re^{d-1})}, \int_{\Gamma_1}\chi_{\lambda,1}\,,\,\int_{\Gamma_2}\chi_{\lambda,2}\geq C\int_{B(0,\e)}\chi_\lambda.\end{equation}
Moreover, for $x\,,\,y\in \supp \chi_{\lambda}$
\begin{equation}
\label{eqn:absEst}
|\gamma(y)-\sigma(x)|=|\gamma(0)-\sigma(0)|+O(M^{-1}(\lambda^{-3\gamma_1}+\lambda^{-2\gamma_2}))\,, \quad \frac{\la \sigma(x)-\gamma(y),\nu_y\ra}{|\gamma(y)-\sigma(x)|}=1+O(\lambda^{-\gamma_1}+\lambda^{-\gamma_2}).
\end{equation}

Hence, choosing $\gamma_1=1/3$ and $\gamma_2=1/2$ and $M$ large enough, using \eqref{eqn:sizeEst}, \eqref{eqn:absEst} in \eqref{eqn:asympDl} we have
$$\|\chi_{\lambda,2}(x)\Dl\chi_{\lambda,1}\|_{L^2(\Gamma_2)}\geq C\lambda^{\frac{d-1}{2}-\frac{d-2}{2}-\frac{1}{3}-\frac{d-2}{4}-\frac{1}{6}}$$
which implies
$$\|\Dl\chi_{\lambda,1}\|_{L^2(\Gamma_2)}\geq C\lambda^{\frac{1}{6}}\|\chi_{\lambda,1}\|_{L^2(\Gamma_1)}.$$

All that remains to show is that $\Gamma_2$ and $\Gamma_1$ can be chosen so that they are curved. To see this, let $\tilde{\gamma}$ be a unit speed reparametrization of $t\mapsto (t+1,(t+1)^2).$ (This example is shown in Figure \ref{fig:example}.) Then, a parametrization of $\Gamma_1$ is given by 
$$(t,x')\mapsto\left((t+1,(t+1)^2)+\frac{(-2(t+1),1)}{\sqrt{1+4(t+1)^2}}|x'|^2,x'\right)$$
and a parametrization of $\Gamma_2$ is given by
$$(t,x')\mapsto\left(\left(-4(t+1)^3,3(t+1)^2+\frac{1}{2}\right)+\frac{(1,2(t+1))}{\sqrt{1+4(t+1)^2}}|x'|^2,x'\right).$$
Then, a simple calculation verifies that near $(0,0)$ these surfaces are curved. Hence, letting $\Gamma$ be a curved hypersurface containing $\Gamma_1$ and $\Gamma_2$ completes the proof of the estimate \eqref{eqn:sharpDLO}

\bibliography{Layer_Ref}
\bibliographystyle{abbrv} 
\end{document}